\documentclass[reqno]{amsart}
\usepackage{graphicx, amsmath,amsfonts,amsthm,amssymb, paralist, fullpage}
\usepackage{hyperref}
\vfuzz2pt 
\hfuzz2pt 

\newtheorem{theorem}{Theorem}[section]
\newtheorem{corollary}[theorem]{Corollary}
\newtheorem{lemma}[theorem]{Lemma}
\newtheorem{proposition}[theorem]{Proposition}
\theoremstyle{definition}

\theoremstyle{remark}
\newtheorem{remark}[theorem]{Remark}
\numberwithin{equation}{section}

\newcommand{\g}{\geqslant}
\newcommand{\ar}{\rangle}
\newcommand{\al}{\langle}

\newcommand{\RR}{\mathbb{R}}\newcommand{\R}{\mathbb{R}}
\newcommand{\ZZ}{\mathbb{Z}}
\newcommand{\CC}{\mathbb{C}}

\newcommand{\NN}{\mathbb{N}}
\newcommand{\p}{\partial}

\newcommand{\les}{\leqslant}
\newcommand{\lesa}{\lesssim}

\newcommand{\mc}[1]{\mathcal{#1}}

\newcommand{\lr}[1]{ \langle #1 \rangle}
\newcommand{\ind}{\mathbbold{1}}

\DeclareSymbolFont{bbold}{U}{bbold}{m}{n}
\DeclareSymbolFontAlphabet{\mathbbold}{bbold}

\DeclareMathOperator*{\supp}{supp}


\newcommand{\N}{\mathbb{N}}

\newcommand{\EQ}[1]{\begin{equation}\begin{split} #1 \end{split}\end{equation}}

\newcommand{\pq}{\quad}

\newcommand{\la}{\lambda}

\newcommand{\PN}[1]{P^N_{#1}}
\newcommand{\PF}[1]{P^F_{#1}}


\setdefaultenum{(i)}{(a)}{1}{A}


\begin{document}

\title{The Zakharov system in dimension $d\g 4$}%

\author[T.~Candy]{Timothy Candy}
\address[T.~Candy]{Department of Mathematics and Statistics, University of Otago, PO Box 56, Dunedin 9054, New Zealand}
\email{tcandy@maths.otago.ac.nz}

\author[S.~Herr]{Sebastian Herr}
\address[S.~Herr]{Fakult\"at f\"ur
  Mathematik, Universit\"at Bielefeld, Postfach 10 01 31, 33501
  Bielefeld, Germany}
\email{herr@math.uni-bielefeld.de}

\author[K.~Nakanishi]{Kenji Nakanishi}
\address[K.~Nakanishi]{Research Institute for Mathematical Sciences, Kyoto University, Kyoto 606-8502, Japan}
\email{kenji@kurims.kyoto-u.ac.jp}

\begin{abstract}
 The sharp range of Sobolev spaces is determined in which the Cauchy problem for the classical Zakharov system is well-posed, which includes existence of solutions, uniqueness, persistence of initial regularity, and real-analytic dependence on the initial data. In addition, under a condition on the data for the Schr\"odinger equation at the lowest admissible regularity, global well-posedness and scattering is proved.
The results cover energy-critical and energy-supercritical dimensions $d\g 4$.
\end{abstract}

\keywords{Zakharov system, local well-posedness, global well-posedness, ill-posedness, scattering}
\subjclass[2020]{Primary: 35Q55. Secondary: 42B37, 35L70, 35P25}

\maketitle

\section{Introduction}
Consider an at most weakly magnetized plasma with ion density fluctuation $v:\RR^{1+d} \to \RR$ and complex envelope $u:\RR^{1+d} \to \CC$ of the electric field. In \cite{zakharov_collapse_1972} Zakharov derived the equations for the dynamics of Langmuir waves, which are rapid oscillations of the electric field in a conducting plasma. A scalar version of his model, called the Zakharov system, is given by
    \begin{equation}\label{eq:Zakharov}
        \begin{split}
           i \p_t u + \Delta u &= v u \\
           \Box v &= \Delta |u|^2
        \end{split}
      \end{equation}
      with the d'Alembertian $\Box = \p_t^2 - \Delta$. We refer to  \cite{zakharov_collapse_1972,Colin2004,Texier2007} and the books \cite{GuoGanKongZhang2016, Sulem1999} for more details of the model and its derivation.

      The Zakharov system is Lagrangian, and formally the $L^2$-norm of $u$ and the energy
      \[
E_Z(u(t),v(t),\partial_tv(t)):=\int_{\RR^d}\frac12|\nabla u(t)|^2+\frac14||\nabla|^{-1}\partial_t v(t)|^2+\frac14| v(t)|^2 +\frac12 v(t)|u(t)|^2dx
      \]
      are constant in time.

      The Zakharov system \eqref{eq:Zakharov} is typically  studied as a Cauchy problem by prescribing initial data in Sobolev spaces, i.e.
      \begin{equation}\label{eq:ic}u(0) = f \in H^s(\RR^d)\quad \text{ and }\quad (v, |\nabla|^{-1}\p_t v)(0) = (g_0, g_1) \in H^\ell(\RR^d)\times H^\ell(\RR^d).
      \end{equation}
      In recent years, this initial value problem has attracted considerable attention, partly driven by the close connection to the focusing cubic nonlinear Schr\"odinger equation (NLS) which arises as a subsonic limit of the Zakharov system \eqref{eq:Zakharov} \cite{Schochet-Weinstein,added1988,ozawa_nonlinear_1992,Kenig1995,Mas08}. In addition, bound states for the focusing cubic NLS are closely intertwined with the global dynamics of \eqref{eq:Zakharov}. More precisely, if $Q_\omega:\R^d\to \R$ is a bound state for the  focusing cubic NLS, in other words if $Q_\omega$ solves
      \[
-\Delta Q_\omega+\omega Q_\omega=Q_\omega^3,
\]
then $(u,v)=(e^{it\omega}Q_\omega,-Q_\omega^2)$ is a global (non-dispersive) solution of \eqref{eq:Zakharov}. This connection has been used to analyze the blow-up behaviour \cite{glangetas_concentration_1994,glangetas_existence_1994,merle_blowup_1998} in dimension $d=2$, and also in the periodic case \cite{KishiMaeda2013}. Furthermore, we can write the Zakharov energy as
    \[E_Z(u(t),v(t),\partial_tv(t))=E_{S}(u(t))+\frac14 \int_{\RR^d} |(1-i|\nabla|^{-1}\partial_t) v(t)+|u|^2|^2dx \]
where
     \[E_{S}(u(t)):=\int_{\RR^d}\frac12|\nabla u(t) |^2-\frac14|u(t)|^4dx\]
is the energy for the focusing cubic NLS. As the cubic NLS is energy-critical in $d=4$, the Zakharov system is also frequently referred to as energy-critical in dimension $d=4$ although, in contrast to the cubic NLS, the Zakharov system lacks scale-invariance, see \cite{Guo2018} for further discussion.

In the Zakharov system, the interplay between the different dispersive effects of solutions to Schr\"odinger and  wave equations leads to a rich local and global well-posedness theory \cite{added1988,ozawa_existence_1992,Kenig1995,bourgain_wellposedness_1996,ginibre_cauchy_1997,colliander_low_2008,Fang2008,Bejenaru2009,Bejenaru2011,Kishi2013,Bejenaru2015}. In particular, it turned out that the required regularity of the Schr\"odinger component can go below the scaling critical one ($s=d/2-1$) for the cubic nonlinear Schr\"odinger equation. Concerning the asymptotic behaviour of global solutions, scattering results have been proven in certain cases \cite{Ozawa1994,ginibre_scattering_2006,Hani2013, Bejenaru2015, Guo2013, Guo2014a, Guo2014,Kato2017, Guo2018}.\\

      The aim of this paper is twofold. First, we give a complete answer to the question of local well-posedness in dimension $d\g 4$, i.e. the energy-critical and super-critical dimensions. Second, we prove that these local solutions  are global in time and scatter, provided that the Schr\"odinger part is small enough.
To be more precise, consider the case $d\g 4$, and $(s, \ell)$ satisfying
\begin{equation}\label{eqn:cond on s l} \ell \g \frac{d}{2}-2, \qquad \max\Big\{\ell-1, \frac{\ell}{2} + \frac{d-2}{4}\Big\} \les s \les \ell + 2, \quad (s,\ell) \not = \Big(\frac{d}{2}, \frac{d}{2}-2\Big), \Big(\frac{d}{2},\frac{d}{2}+1\Big).\end{equation}

Our first main result is
    \begin{theorem}\label{thm:lwp-ill} The Zakharov system \eqref{eq:Zakharov} with initial condition \eqref{eq:ic} is locally well-posed with a real-analytic flow map, if and only if
      $(s,\ell)\in \RR^2$ satisfies \eqref{eqn:cond on s l}.
    \end{theorem}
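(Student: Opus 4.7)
The plan is to handle the ``if'' and ``only if'' directions separately. For the positive direction, I would recast the system in first-order form by setting $w = v - i|\nabla|^{-1}\partial_t v$, reducing $\Box v = \Delta|u|^2$ to $(i\partial_t - |\nabla|)w = |\nabla|(|u|^2)$, and work in resolution spaces of $U^p/V^p$ type adapted separately to the Schr\"odinger and half-wave dispersion relations. Call these spaces $X^s$ (for $u$) and $Y^\ell$ (for $w$). They admit the full range of Strichartz estimates without endpoint loss, which is essential in the energy-critical and supercritical dimensions $d\ge 4$. Local well-posedness then reduces, via the Duhamel formulation and a fixed-point argument, to two bilinear estimates: control of the Schr\"odinger nonlinearity $vu$ in the space dual to $X^s$ by $\|w\|_{Y^\ell}\|u\|_{X^s}$, and control of the wave nonlinearity $|\nabla|(u_1\overline{u_2})$ in the space dual to $Y^\ell$ by $\|u_1\|_{X^s}\|u_2\|_{X^s}$.

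After a dyadic Littlewood--Paley decomposition, each bilinear estimate splits into frequency regimes, and the conditions in \eqref{eqn:cond on s l} are precisely what is needed to sum the resulting pieces. The constraint $\ell\ge d/2-2$ is the scaling threshold for the wave component; the inequalities $s\le \ell+2$ and $s\ge \ell-1$ correspond to the two high-low regimes inside the Schr\"odinger product $vu$; and the lower bound $s\ge \ell/2 + (d-2)/4$ controls the delicate high-high to low interaction in $|u|^2$, in which two Schr\"odinger waves feed a low-frequency wave. The last regime is the most demanding: it calls for an angular Whitney decomposition separating the two Schr\"odinger frequencies and a bilinear $L^2_{t,x}$ restriction estimate attached to the paraboloid, producing an angular gain that survives the dyadic summation exactly at the stated threshold.

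For the negative direction, real-analytic dependence on the data implies boundedness of every multilinear coefficient in the Picard expansion of the data-to-solution map. It is therefore enough, for each $(s,\ell)$ violating \eqref{eqn:cond on s l}, to construct frequency-localized initial data for which the second Picard iterate fails the required $C([0,T];H^s)\times C([0,T];H^\ell\times H^\ell)$ bound. The failure along each boundary line is witnessed by data concentrated in the corresponding extremal regime, dualizing the bilinear estimate that has just barely failed. At the two exceptional points $(d/2,d/2-2)$ and $(d/2,d/2+1)$, which lie at corners where two boundary inequalities become simultaneously tight, the same construction produces only a logarithmic divergence, which still defeats continuous dependence but demands a particularly careful angular localization matching the geometry of the resonance.

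The main obstacle I foresee is the sharp bilinear estimate for $|u|^2$ near the lower boundary $s=\ell/2 + (d-2)/4$, where Schr\"odinger dispersion alone is insufficient and one must exploit the transversality between the two incoming Schr\"odinger frequencies and the outgoing wave cone. Handling the resonant region in which this transversality degenerates is the crux of the analysis, and the two exceptional points are precisely those at which the transversality argument fails by a logarithm --- linking the sharpness of the well-posedness range to the sharpness of the ill-posedness counterexamples.
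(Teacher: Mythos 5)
Your ill-posedness sketch is essentially the paper's argument (unboundedness of the second Picard iterate / second directional derivative at the origin, tested on frequency-localized data adapted to each failing boundary line), and that half is fine in outline. The genuine gap is in the positive direction, and it is not a technicality: you propose to close the fixed point in $U^p/V^p$-type resolution spaces adapted to $e^{it\Delta}$ and $e^{it|\nabla|}$, which by the transfer principle control the full (endpoint) Strichartz norms, i.e. they embed into $L^2_tW^{s,2^*}_x$. But the high(wave)--low(Schr\"odinger) interaction already defeats any such space on part of the region \eqref{eqn:cond on s l}: taking a free wave $\phi_\lambda$ at frequency $\lambda$ and a free Schr\"odinger wave $\psi_\mu$ at frequency $\mu\ll\lambda$, the first nontrivial Picard iterate $(i\partial_t+\Delta)^{-1}(\phi_\lambda\psi_\mu)$ costs a factor $(\lambda/\mu)^{s-1}$ in $L^2_tW^{s,2^*}_x$ when the wave datum is only in $H^{\frac{d-4}{2}}$, so the iterate leaves the endpoint Strichartz space as soon as $s>\ell+1$. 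Hence no iteration scheme in a space that embeds into $L^2_tW^{s,2^*}_x$ can cover the subregion $\ell+1\le s\le\ell+2$, in particular the neighbourhood of the corner $(s,\ell)=(\frac{d}{2},\frac{d}{2}-2)$ that the theorem must include; this is exactly the obstruction that limited earlier $X^{s,b}$/$U^p$-$V^p$/Strichartz approaches to $s<\ell+1$ (or to extra wave regularity). The paper's resolution is a different kind of norm, the temporal-derivative-weighted Strichartz norm \eqref{eqn:temporal deriv strich} built into $S^{s,a,b}$, which coincides with the endpoint Strichartz norm on the Schr\"odinger-like modulation region $|\tau|\approx|\xi|^2$ but deliberately gives up $a$ derivatives of Strichartz control at small temporal frequencies, where the wave--Schr\"odinger interaction lives; without this (or an equivalent device) your scheme cannot close on the full claimed range.

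Two smaller points. First, the crux you anticipate --- an angular Whitney decomposition and a bilinear $L^2$ restriction estimate for the high-high Schr\"odinger-to-low-wave interaction near $2s=\ell+\frac{d-2}{2}$ --- is not needed: in the paper this interaction is handled by H\"older, Bernstein/Sobolev, modulation (non-resonance) identities, and the fractional-time-derivative product estimate, and the borderline cases are absorbed by excluding the two exceptional points in \eqref{eqn:cond on s l}; so that part of your plan is a harder route than necessary rather than wrong. Second, at the exceptional points the paper's counterexamples use radial annular/ball Fourier supports with a logarithmic weight (producing a $\log\log\lambda$ divergence); no angular localization is required there either.
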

    To be more precise, we consider mild solutions to an equivalent first order system \eqref{eq:Zakharov 1st order}, as usual. For this
 we show local well-posedness results, Theorem \ref{thm:gwp nonendpoint}, which applies to the non-endpoint case, and  Theorem \ref{thm:gwp endpoint}, for the endpoint case. Finally, we provide two examples in Subsection \ref{subsec:proof-lwp-ill}, which show that if the flow map exists for $(s,\ell)$ in the exterior of the region defined by \eqref{eqn:cond on s l}, it does not have bounded directional derivatives of second order at the origin. Partial ill-posedness results have been obtained earlier in \cite{ginibre_cauchy_1997,holmer_local_2007,Bejenaru2009,Domingues2019}. In the specific point $(s,\ell)=(2,3)$ in $d=4$ a stronger form of ill-posedness was proved in \cite[Section 7]{Bejenaru2015}, namely that there is no distributional solution at this regularity.

\begin{figure}[ht!]\label{fig:reg}
  \begin{center}
    \includegraphics[trim=200pt 700pt 600pt 750pt, clip=true, width=0.5\textwidth]{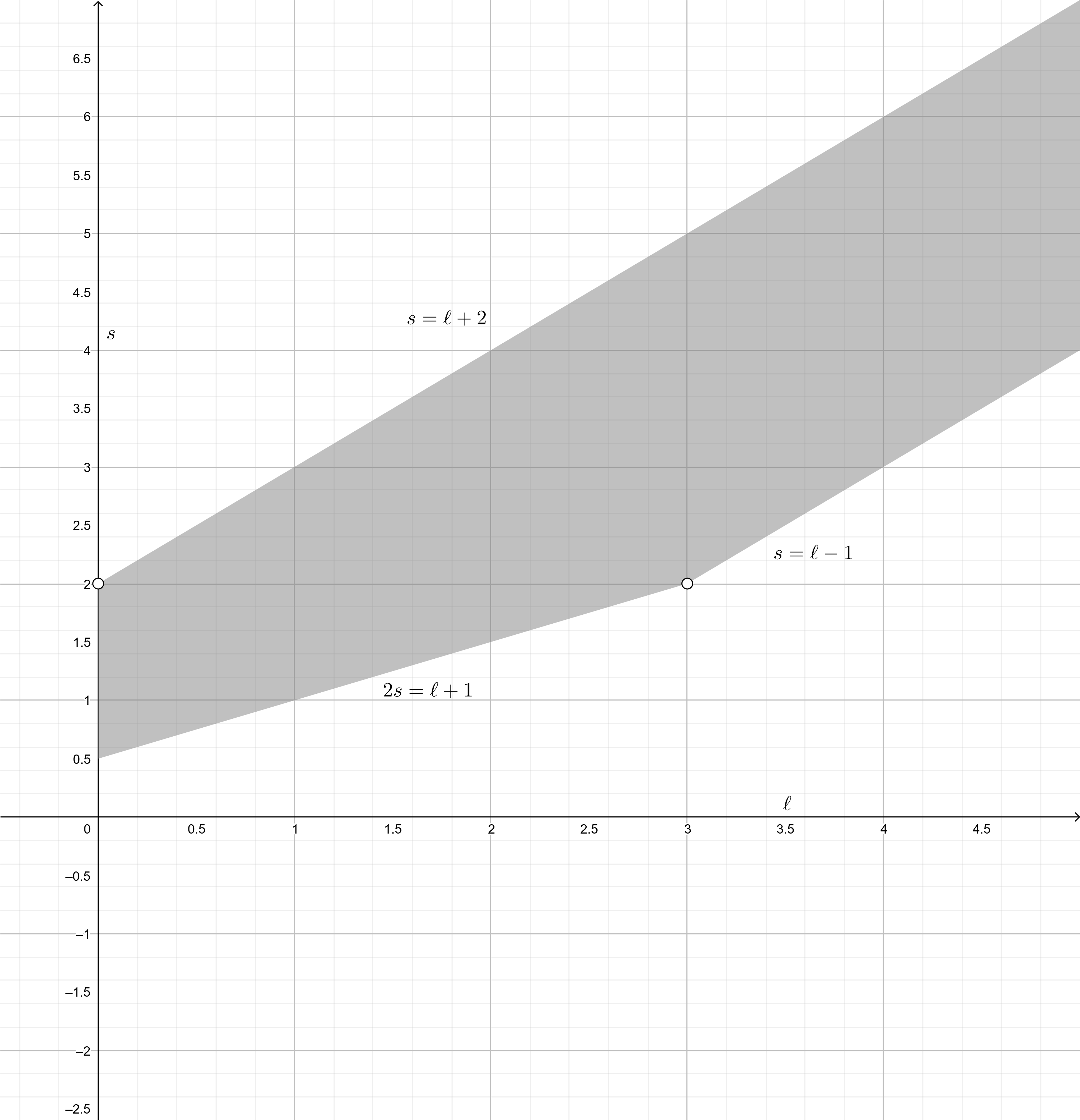}
  \end{center}
  \caption{In dimension $d=4$: Local well-posedness and small data global well-posedness within grey region, ill-posedness otherwise.}
\end{figure}

Our second main result is
\begin{theorem}\label{thm:small data gwp}
Let $d\g 4$ and $(s,\ell)$ satisfy \eqref{eqn:cond on s l}. For any data $(g_0, g_1) \in H^\ell(\RR^d) \times H^\ell(\RR^d)$, there exists $\epsilon>0$ such that for any $f \in H^s(\RR^d)$ satisfying $ \| f \|_{H^{\frac{d-3}{2}}}\les \epsilon$, we have a global solution $ u \in C(\RR, H^s(\RR^d))$, $(v, |\nabla|^{-1}\p_t v) \in C(\RR, H^\ell(\RR^d) \times H^\ell(\RR^d))$ to \eqref{eq:Zakharov} and \eqref{eq:ic}, which is unique under the condition
  \[ u \in L^2_{loc,t} (\R, W^{\frac{d-3}{2}, \frac{2d}{d-2}}_x(\R^d)) ,\]
and depends real-analytically on the initial data. This solution scatters as $t\to \pm \infty$.
\end{theorem}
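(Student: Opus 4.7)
The plan is to upgrade the local-in-time bilinear estimates supporting Theorem \ref{thm:lwp-ill} to global-in-time estimates, and then to close a contraction on all of $\R$ using $\|f\|_{H^{(d-3)/2}}$ as the sole smallness parameter. After reducing \eqref{eq:Zakharov} to the associated first-order system in variables $(u, V)$ with $V = v + i|\nabla|^{-1}\partial_t v$, the Duhamel formulation has Schr\"odinger forcing $\operatorname{Re}(V)\,u$ and half-wave forcing proportional to $|\nabla|\,|u|^2$. The natural setting is the pair of $U^p/V^p$-based spaces $S^\sigma$ adapted to $e^{it\Delta}$ (which embed in the relevant Strichartz spaces) and $W^\lambda$ adapted to the half-wave propagators $e^{\mp it|\nabla|}$, together with the dual ``nonlinearity'' spaces $N^\sigma$, $N^\lambda$ used to measure the Duhamel terms.

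The key analytic input is a family of \emph{tame} multilinear estimates of schematic form
\[
\|V u\|_{N^s(\R)} \,\lesssim\, \|V\|_{W^\ell(\R)}\,\|u\|_{S^{(d-3)/2}(\R)} \;+\; \|V\|_{W^{d/2-2}(\R)}\,\|u\|_{S^s(\R)},
\]
\[
\big\|\,|\nabla|\,|u|^2\,\big\|_{N^\ell(\R)} \,\lesssim\, \|u\|_{S^s(\R)}\,\|u\|_{S^{(d-3)/2}(\R)},
\]
together with the companion difference estimates. These are the global-in-time refinements of the bilinear estimates that drive the local theory of Theorem \ref{thm:lwp-ill}. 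Their key structural property is that every right-hand side contains at least one factor measured at the critical Schr\"odinger regularity $(d-3)/2$, precisely the scale at which smallness is assumed; the top-regularity norms $\|u\|_{S^s}$ and $\|V\|_{W^\ell}$ enter only linearly, so large wave data can be absorbed.

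With these in hand, the contraction is set up inside
\[
\mathcal{X} = \Big\{\,(u,V) \;:\; \|u\|_{S^{(d-3)/2}(\R)} \leq 2C\epsilon,\;\; \|u\|_{S^s(\R)} \leq R_1,\;\; \|V - V_L\|_{W^\ell(\R)} \leq R_2\,\Big\},
\]
where $V_L$ denotes the free half-wave evolution of $(g_0,g_1)$ and $R_1, R_2$ depend on $\|f\|_{H^s}$ and $\|(g_0,g_1)\|_{H^\ell\times H^\ell}$, with $R_2$ of order $\epsilon R_1$. The linear evolutions satisfy $\|e^{it\Delta}f\|_{S^{(d-3)/2}} \lesssim \epsilon$, $\|e^{it\Delta}f\|_{S^s} \lesssim \|f\|_{H^s}$, and $\|V_L\|_{W^\ell} \lesssim \|(g_0,g_1)\|_{H^\ell\times H^\ell}$. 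Inserting these bounds into the Duhamel map and applying the estimates above shows that the map preserves $\mathcal{X}$ and contracts, provided $\epsilon$ is chosen small enough depending on $\|f\|_{H^s}$ and $\|(g_0,g_1)\|_{H^\ell\times H^\ell}$. The fixed point is the required global solution, and real-analytic dependence on the data is automatic because the iteration is a multilinear power series. Uniqueness in the broader class $L^2_{\mathrm{loc},t}W^{(d-3)/2,\,2d/(d-2)}_x$ follows from a standard Strichartz-plus-Gr\"onwall comparison between two candidate solutions on compact time intervals, together with the fact that this Strichartz space embeds into $S^{(d-3)/2}$ after localization. Finally, finiteness of the global $N^s(\R)$ and $N^\ell(\R)$ norms of the nonlinearities implies convergence of the Duhamel integrals $\int_0^{\pm\infty} e^{-is\Delta}(Vu)(s)\,ds$ and $\int_0^{\pm\infty} e^{\pm is|\nabla|}(|\nabla||u|^2)(s)\,ds$ in $H^s$ and $H^\ell$ respectively, giving scattering states $u_\pm \in H^s$ and $V_\pm \in H^\ell$.

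The main obstacle is establishing the tame multilinear estimates globally in time with the ``small factor at critical regularity'' structure above: this is what converts the smallness hypothesis on $\|f\|_{H^{(d-3)/2}}$ into a small Lipschitz constant for the nonlinear map while keeping the top-regularity norms linear. Near the boundary of the admissible region \eqref{eqn:cond on s l} --- in particular along the line $s = \ell/2 + (d-2)/4$ and at the lower endpoint $\ell = d/2-2$ --- these estimates are sharp, and extracting the required structure demands a careful analysis of resonant low-high and high-high frequency interactions, handled via bilinear Strichartz and transversality refinements.
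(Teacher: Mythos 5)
There is a genuine gap at the heart of your argument: you treat the full wave component, including the free evolution $V_L$ of the large data $(g_0,g_1)$, perturbatively inside the fixed-point iteration. The term $\Re(V)u$ is \emph{linear} in $u$, and in your tame estimates its coefficient is $\|V\|_{W^\ell}$ (or $\|V\|_{W^{d/2-2}}$), which for $V\approx V_L$ is of size $\|(g_0,g_1)\|_{H^\ell}$ and is not small. ``Entering only linearly'' does not permit absorption: the self-mapping bound for $\|u\|_{S^s}$ and, worse, the difference bound both carry a Lipschitz constant $\gtrsim \|V_L\|_{W^{\ell}}$, since in $\|V_L(u_1-u_2)\|_{N^s}\lesssim \|V_L\|_{W}\|u_1-u_2\|_{S}$ the difference appears at full strength and the smallness of $\|u\|_{S^{(d-3)/2}}$ is of no use. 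Hence the map is neither a self-map of $\mathcal{X}$ nor a contraction once the wave data has size $\gtrsim 1$, and your scheme only reproves the small-wave-data case (Section \ref{sec:simple} of the paper). The paper's way around this is structural: the large free wave is moved into the linear operator, i.e.\ one solves $(i\p_t+\Delta-\Re(V_L))u=F$ globally (Theorem \ref{thm:global model}) by decomposing $\RR$ into finitely many intervals on which $V_L$ is small in $W^{\ell,a,\beta}+L^2_tW^{s,d}_x$ (dispersion of the smooth part of $g$, Lemma \ref{lem:interval decom}) and gluing with Lemma \ref{lem:decomposability}; in the ensuing contraction (Theorems \ref{thm:gwp nonendpoint}, \ref{thm:gwp endpoint}) only the small difference $g-g_*$ and the cubic term—whose smallness comes from the critical Strichartz norm of $e^{it\Delta}f_*$—are perturbative, which is precisely why $\epsilon$ ends up depending on the wave datum through the constant $C_{g_*}$.

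Two further points. First, you choose $\epsilon$ ``depending on $\|f\|_{H^s}$ and $\|(g_0,g_1)\|_{H^\ell}$'', but the theorem requires $\epsilon=\epsilon(g_0,g_1)$ only, with $\|f\|_{H^s}$ allowed to be arbitrarily large; the paper achieves this by running the contraction solely at the endpoint regularity $(\tfrac{d-3}{2},\tfrac{d-4}{2})$, where the smallness hypothesis involves only $\|f\|_{H^{(d-3)/2}}$, and then upgrading to $(s,\ell)$ by persistence of regularity (Theorem \ref{thm:persistence}) without any new smallness requirement—your scheme, which iterates directly at regularity $s$, cannot give this uniformity. Second, your schematic estimate $\|Vu\|_{N^s}\lesssim\|V\|_{W^{d/2-2}}\|u\|_{S^s}+\dots$ cannot hold in any space $S^s$ that controls the endpoint Strichartz norm $L^2_tW^{s,2^*}_x$ once $s\g \ell+1$: the high-low wave--Schr\"odinger interaction described in the introduction forces a loss there, which is why the paper's spaces carry the fractional temporal weights $(a,b)$ in $S^{s,a,b}$ rather than a plain $U^p/V^p$ or Strichartz structure. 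These are fixable only by essentially adopting the paper's setup, so as written the proposal does not establish the theorem.
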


Theorem  \ref{thm:small data gwp} is a consequence of Theorems \ref{thm:gwp nonendpoint}, \ref{thm:gwp endpoint} and \ref{thm:persistence}, which again apply to the first order system \eqref{eq:Zakharov 1st order} in the mild formulation, see Subsection \ref{subsec:proof-small-gwp}. In fact, we prove something stronger, and show that the smallness condition in Theorem \ref{thm:small data gwp} can be replaced with the weaker condition
    $$\| f \|_{H^\frac{d-3}{2}}^{\frac{1}{2}} \| e^{it\Delta} f\|_{L^2_t W^{\frac{d-3}{2},\frac{2d}{d-2}}_x}^{\frac{1}{2}} \les \epsilon.$$

We remark that Theorems \ref{thm:gwp nonendpoint}, \ref{thm:gwp endpoint} (setting $g_*=0$) also imply that the smallness condition on $f$ does not depend on $(g_0,g_1)$ provided that $\|(g_0, g_1)\|_{H^\frac{d-4}{2}} \ll 1$ is also sufficiently small. For readers primarily interested in this important and much easier case, we provide a simplified approach and results in Section \ref{sec:simple}.

In general, $\epsilon>0$ in Theorem \ref{thm:small data gwp} must depend on the wave initial data $(g_0,g_1)$, and it is not even uniform with respect to its norm, at least when $(s,\ell)$ is on a segment of the lowest regularity ($\ell=d/2-2$ and $(d-3)/2\le s<d/2-1$): Take any non-negative $f_0\in C^\infty_0(\RR^d)\setminus\{0\}$. Multiplying it with a large number $a\gg 1$, we can make the NLS energy negative $E_S(af_0)<0$. Imposing $g_0=-|af_0|^2$ and $g_1=0$ makes the Zakharov energy the same: $E_Z(af_0,g_0,g_1)=E_S(af_0)$. When the energy is negative, scattering is impossible, because the global dispersion would send the negative nonlinear part to zero as $t\to\infty$. Finally, to make the Schr\"odinger data small, we can use the scaling-invariance of the NLS: Let $f(x)=\lambda af_0(\lambda x)$ with $\lambda\to\infty$. Since this is the $\dot H^{d/2-1}$-invariant scaling, all $\dot H^s$ norms with $s<d/2-1$ tend to zero as the data concentrate, including the $L^2$ norm ($s=0$). For the wave component, the scaling leaves $\dot H^{d/2-2}$ invariant, which is the lowest (critical) regularity. In other words, we can make the Schr\"odinger data as small in $H^s$ as we like for $s<d/2-1$, while keeping the wave norm in $\dot H^{d/2-2}$.

Further, in the energy-critical case ($d=4$), we observe that there exist non-scattering solutions as soon as $\|g_0\|_{L^2}>\|W^2\|_{L^2}$, where $W(x)=(|x|^2/(d(d-2))+1)^{-1}$ is the ground state of the NLS.
To see this, start with  $f(x)=a W \chi(x/R)$ with a smooth cut-off function $\chi$ (which is needed since $W$ barely fails to be in $L^2(\RR^4)$). Choosing $a>1$, and then $R>1$ large enough depending on $a$, we obtain $E_S(f)<E_S(W)$ and $\||f|^2\|_{L^2}>\|W^2\|_{L^2}$, so that we can apply the grow-up result (with $g_0=-|f|^2$ and $g_1=0$ as above) in the radial case obtained in \cite{Guo2018}. The large data case in the energy-critical dimension $d=4$ is addressed in a follow-up paper \cite{Candy2021}.

The key contributions of Theorems \ref{thm:lwp-ill} and \ref{thm:small data gwp} are firstly that we give a complete characterisation of the region of well-posedness in arbitrary space dimension $d\g 4$, and secondly that we obtain global well-posedness and scattering for wave data of arbitrary size, only requiring the Schr\"odinger data to be small enough. In particular, in the energy-critical dimension $d=4$ this extends \cite{Bejenaru2015} to the subregion where $(s,\ell)= (1,0)$ or $s\geq 4\ell+1$ or $s> 2\ell+\frac{11}{8}$ and the scattering to wave data of arbitrary size.
Note that \cite{Bejenaru2015} covers the energy space $(s,\ell)=(1,0)$ but by a compactness argument, from which it is not immediately clear whether the solution map is analytic.
Further, if $d=4$, the large data threshold result in \cite{Guo2018} is restricted to radial data. In higher dimensions, this is an extension of the local well-posedness results in \cite{ginibre_cauchy_1997}, which apply in the subregion where $\ell\leq s\leq \ell+1$ and $2s>\ell+\frac{d-2}{2}$, and the global well-posedness and scattering result in \cite{Kato2017}, which applies if $(s,\ell)=(\frac{d-3}{2},\frac{d-4}{2})$ and both the wave and the Schr\"odinger data are small.

The recent well-posedness results cited above rely on a partial normal form transformation. This strategy introduces certain boundary terms which are non-dispersive and difficult to deal with in the low regularity setup. In this paper, we introduce a new perturbative approach which is based on Strichartz and maximal $L^2_{t,x}$ norms with additional temporal derivatives allowing us to exploit the different dispersive properties of the wave and the Schr\"odinger equation.  Further, the global well-posedness result allows for wave data of arbitrary size, which is achieved by treating the free wave evolution as a potential term in the Schr\"odinger equation.

One of the main challenges in proving the global well-posedness results in Theorem \ref{thm:lwp-ill} and Theorem \ref{thm:small data gwp}  in the range where $s > \ell +1$ lies in the fact that it seems impossible to control the endpoint Strichartz norm, i.e. to prove that $\lr{\nabla}^s u\in L^2_t L^{\frac{2d}{d-2}}_x$. To some extent, this is explained by considering
        $$ (i\p_t + \Delta) u = \phi_\lambda \psi_\mu $$ as a toy model for \eqref{eq:Zakharov},
where $\phi_\lambda = e^{it|\nabla|} f_\lambda$ is a free wave, $\psi_\mu = e^{it\Delta} g_\mu$ is a free solution to the Schr\"odinger equation, the wave data $f_\lambda$ has spatial frequencies $|\xi|\approx \lambda$, and the Schr\"odinger data $g_\mu$ has spatial frequencies $|\xi| \approx \mu$ with  $\mu \ll \lambda$. Note that this is essentially the first Picard iterate for \eqref{eq:Zakharov}. A computation shows that the product $\phi_\lambda \psi_\mu$ has spacetime Fourier support in the set $\{ |\tau| \ll \lambda^2,  |\xi| \approx \lambda\}$ and hence (modulo a free Schr\"odinger wave) we can write
    $$ u \sim (i\p_t + \Delta)^{-1}( \phi_\lambda \psi_\mu) \sim \lambda^{-2} (\phi_\lambda \psi_\mu). $$
In particular, we expect that (in the case $d=4$ for ease of notation)
    $$ \| \lr{\nabla}^s u \|_{L^2_t L^4_x(\RR^{1+4})} \approx \lambda^{s-2} \| \phi_\lambda \psi_\mu\|_{L^2_t L^4_x(\RR^{1+4})}.$$
If we assume the wave endpoint regularity, in $d=4$ we can only place $\phi_\lambda \in L^\infty_t L^2_x$. Thus applying H\"older's inequality together with the sharp Sobolev embedding and the endpoint Strichartz estimate for the free Schr\"odinger equation we see that
    $$ \| \phi_\lambda \psi_\mu \|_{L^2_t L^4_x(\RR^{1+4})}\lesa \| \phi_\lambda \|_{L^\infty_t L^4_x(\RR^{1+4})} \| \psi_\mu \|_{L^2_t L^\infty_x(\RR^{1+4})} \lesa \lambda \| f_\lambda \|_{L^2(\RR^4)} \mu \| g_\mu \|_{L^2(\RR^4)}. $$
Note that the above chain of inequalities is essentially forced if we may only assume the regularity $\phi_{\lambda} \in L^\infty_t L^2_x$. Consequently, we obtain
    $$ \| \lr{\nabla}^s u \|_{L^2_t L^4_x} \lesa \Big(\frac{\lambda}{\mu}\Big)^{s-1}\| f_\lambda \|_{L^2} \| g_\mu \|_{H^s}. $$
Again, as we can only place $f_\lambda \in L^2_x$, this imposes the restriction $s\les 1$. It is very difficult to see a way to improve the above computation, and in fact this high-low interaction is essentially what led to the restriction $s<1$ in \cite{Bejenaru2015, Kato2017}. Note however that this obstruction only leads to $\lr{\nabla}^s u \not \in L^2_t L^4_x(\RR^{1+4})$, and is \emph{not} an obstruction to well-posedness. In other words, provided only that $s\les 2$ we still have $u \in L^\infty_t H^s_x$ since similar to the above computation
    $$ \| u \|_{L^\infty_t H^s_x(\RR^{1+4})} \approx \lambda^{s-2} \| \phi_\lambda u_\mu \|_{L^\infty_t L^2_x} \lesa \Big(\frac{\lambda}{\mu}\Big)^{s-2}\| f_\lambda \|_{L^2_x} \| g_\mu \|_{H^s}. $$
In summary, the above example strongly suggests that it is not possible to construct  solutions to the Zakharov system by iterating in the endpoint Strichartz norms $L^2_t W^{s, 4}(\RR^{1+4})$, or even any space which contains the endpoint Strichartz space. Thus an alternative space is required, and this is what we construct in this paper.

A partial solution to the above problem of obtaining well-posedness in the regularity region $s\ge\ell+1$ was given in \cite{Bejenaru2015}. The approach taken there was to replace the endpoint Strichartz space $L^2_t W^{s,4}_x$ with the intermediate Strichartz spaces $L^q_t W^{s,r}_x$ for appropriate (non-endpoint, i.e. $q>2$) Schr\"odinger admissible $(q,r)$. However, the argument given in \cite{Bejenaru2015}  requires additional regularity for the wave component $v$ as it exploits Strichartz estimates for the wave equation to compensate for the loss in decay in the intermediate Schr\"odinger Strichartz spaces, and thus misses a neighbourhood of the corner $(s,l)=(\frac{d}{2},\frac{d}{2}-2)$.

The key observation that gives well-posedness in the full region \eqref{eqn:cond on s l} is that the output of the above high-low interaction has small temporal frequencies. Consequently, the endpoint Strichartz space only loses regularity at small temporal frequencies. This observation can be exploited by using norms of the form
        \begin{equation}\label{eqn:temporal deriv strich}
            \| ( \lr{\nabla} + |\p_t|)^a u \|_{L^2_t W^{s-2a, 4}_x(\RR^{1+4})}.
        \end{equation}
Note that if $u = e^{it\Delta} f$ is a free solution to the Schr\"odinger evolution, then $u$ has temporal Fourier support in $\{ |\tau| \approx |\xi|^2\}$ and hence
        $$ \| ( \lr{\nabla} + |\p_t|)^a u \|_{L^2_t W^{s-2a, 4}_x(\RR^{1+4})} \approx \| u \|_{L^2_t W^{s,4}_x}. $$
Thus the norm \eqref{eqn:temporal deriv strich} is equivalent to the standard endpoint Strichartz space for free Schr\"odinger waves. On the other hand, if $u$ has Fourier support in $\{ |\tau| \lesa |\xi| \}$, i.e. $u$ has only  small temporal frequencies, then
    $$\| ( \lr{\nabla} + |\p_t|)^a u \|_{L^2_t W^{s-2a, 4}_x(\RR^{1+4})} \approx \| u \|_{L^2_t W^{s-a, 4}_x}. $$
In other words, we only have $\lr{\nabla}^{s-a} u \in L^2_t L^4_x(\RR^{1+4})$ and thus we allow for a loss of regularity in the small temporal frequency region of the Strichartz norm. Moreover, again considering the above high-low interaction, we can control the output $(i\p_t + \Delta)^{-1} (\phi_\lambda \psi_\mu)$ in the temporal derivative Strichartz  space \eqref{eqn:temporal deriv strich} provided that $a \g s-1$. In particular choosing $a\sim 1$ gives the full range $s<2$. Thus roughly speaking, the norm \eqref{eqn:temporal deriv strich} matches the standard endpoint Strichartz space for the Schr\"odinger like portion of the evolution of $u$ (i.e. when $|\tau| \approx |\xi|^2$), but allows for a loss of regularity in the small temporal frequency regions $|\tau| \ll |\xi|^2$ of $u$ which are strongly influenced by nonlinear wave-Schr\"odinger interactions. We refer to estimate \eqref{eqn:stri with loss} and Remark \ref{rmk:str-control} below for further related comments.

\subsection{Outline of the paper}\label{subsect:outline}
In Section \ref{sec:not}, notation is introduced, the crucial function spaces are defined, and their key properties are discussed. Further, a product estimate for fractional time-derivatives is proved. Bilinear estimates for the Schr\"odinger and the wave nonlinearities are proved in Section \ref{sec:bil-est-schr} and \ref{sec:bil-est-wave}, respectively. In Section \ref{sec:simple} we provide a shortcut to simplified local and small data global well-posedness and scattering results which do not use the refined results of the following Sections. Local versions of the bilinear estimates in the endpoint case are proved in Section \ref{sec:local-bil}. In Section \ref{sec:wp} the technical well-posedness results are established, most notably Theorems \ref{thm:gwp nonendpoint} and Theorem \ref{thm:gwp endpoint}. Persistence of regularity is established in Section \ref{sec:pers}. Finally, the proofs of Theorem \ref{thm:lwp-ill} and Theorem \ref{thm:small data gwp} are completed in Section \ref{sec:proofs-main}.

\section{Notation and Preliminaries}\label{sec:not}

The Zakharov system has an equivalent first order formulation which is slightly more convenient to work with. Suppose that $(u, v)$ is a solution to \eqref{eq:Zakharov} and let $V = v - i |\nabla|^{-1} \p_t v$. Then $(u,V)$ solves the first order problem
    \begin{equation}\label{eq:Zakharov 1st order}
        \begin{split}
           i \p_t u + \Delta u &= \Re(V) u \\
            i\p_t V + |\nabla| V &= - |\nabla| |u|^2.
        \end{split}
    \end{equation}
Conversely, given a solution $(u, V)$ to \eqref{eq:Zakharov 1st order}, the pair $(u, \Re(V))$ solves the original Zakharov equation \eqref{eq:Zakharov}.

\subsection{Fourier multipliers}\label{subsec:fm}
Let $\varphi \in C^\infty_0(\RR)$ such that $\varphi \g 0$,  $\supp \varphi \subset \{\frac{1}{2} < r < 2\}$ and
$$ 1 = \sum_{ \lambda \in 2^\ZZ} \varphi\Big( \frac{r}{\lambda} \Big) \,\text{ for }\, r>0.$$
Let $\NN=\{0,1,2,\ldots\}$. For $\lambda\in 2^\NN$, define the spatial Fourier multipliers
$$ P_\lambda = \varphi\Big( \frac{|\nabla|}{\lambda}\Big)\text{ if }\lambda>1, \quad P_1 = \sum_{\lambda \in 2^\ZZ, \lambda\les 1} \varphi\Big( \frac{|\nabla|}{\lambda}\Big)$$
Thus $P_\lambda$ is a (inhomogeneous) Fourier multiplier localising  the spatial Fourier support to the set $\{ \frac{\lambda}{2} < |\xi| < 2 \lambda \}$ if $\lambda>1$ and $\{|\xi| < 2 \}$ if $\lambda=1$.
Further, for $\lambda\in 2^\ZZ$, we define
\[\qquad P^{(t)}_\lambda = \varphi\Big( \frac{ |\p_t|}{\lambda}\Big), \qquad C_\lambda = \varphi\Big(\frac{|i\p_t + \Delta|}{\lambda}\Big).\]
$P^{(t)}_\lambda$ localises the temporal Fourier support to the set $\{ \frac{\lambda}{2} < |\tau| < 2 \lambda\}$, and $C_\lambda$ localises the space-time Fourier support to distances $\approx \lambda$ from the paraboloid.

To restrict the Fourier support to larger sets, we use the notation
	\[P_{\les \lambda}=\sum_{\mu \in 2^{\ZZ},\mu \les \lambda} \varphi\Big( \frac{|\nabla|}{\mu}\Big), \; P^{(t)}_{\les \lambda}=\sum_{\mu \in 2^{\ZZ},\mu \les \lambda} \varphi\Big( \frac{|\p_t|}{\mu}\Big), \;C_{\les \lambda}=\sum_{\mu \in 2^{\ZZ},\mu \les \lambda} \varphi\Big( \frac{|i\p_t + \Delta|}{\mu}\Big),\]
and define $C_{>\mu} = I - C_{\les \mu}$. For ease of notation, for $\lambda \in 2^\NN$ we often use the shorthand $P_\lambda f = f_\lambda$. In particular, note that $u_1 = P_1 u$ has Fourier support in $\{|\xi| < 2\}$, and we have the identity
        $$ f = \sum_{\lambda \in 2^\NN} f_\lambda, \, \text{for any }\, f \in L^2(\RR^d).$$
For brevity, let us denote the frequently used decomposition into high and low modulation by
\EQ{
 \PN{\lambda } u := C_{\les (\frac{\lambda}{2^8})^2} P_\la u , \pq \PF{\lambda} u: = C_{ > ( \frac{\lambda}{2^8})^2} P_\la u,}
so that $u_\lambda=\PN{\lambda}u+\PF{\lambda}u$. Similarly, we take
$$ \PN{}:=\sum_{\lambda \in 2^\NN} P^N_\lambda, \qquad P^F = \sum_{\lambda \in 2^\NN} P^F_\lambda, \qquad P^F_{\les \lambda} = \sum_{\mu \in 2^\NN, \mu \les \lambda} P^F_\lambda, \qquad \text{etc.}$$
Note that $ u = P^N u + P^F u$, and these multipliers all obey the Schr\"odinger scaling, for instance
\EQ{
 (\PN{\la} u)(t/\la^2,x/\la)=\PN{2}(u(4t/\la^2,2x/\la)),}
where $\PN{2}$ is a space-time convolution with a Schwartz function, so that we can easily deduce that $\PN{\la}$ and $\PF{\la}$ are bounded on any $L^p_tL^q_x$ uniformly in $\la\in 2^\N$, and that $\PN{}$ and $\PF{}$ are bounded on any $L^2_tB^s_{q,2}$.

\subsection{Function spaces}\label{subsec:fs}
In the sequel, by default we consider tempered distributions. We define the inhomogeneous Besov spaces $B^s_{q, r}$ and Sobolev spaces $W^{s,p}$ via the norms
        $$ \| f \|_{B^s_{q,r}} = \Big( \sum_{\lambda \in 2^\NN} \lambda^{sr} \| f_\lambda \|_{L^q}^r \Big)^\frac{1}{r}, \qquad  \| f \|_{W^{s,p}} = \| \lr{\nabla}^s f \|_{L^p}. $$
We use the notation $2^*= \frac{2d}{d-2}$ and $2_* = (2^*)' = \frac{2d}{d+2}$ to denote the endpoint Strichartz exponents for the Schr\"odinger equation. Thus for $d \g 3$ we have
$$ \| e^{it\Delta} f \|_{L^\infty_t L^2_x \cap L^2_t L^{2^*}_x} + \Big\| \int_0^t e^{i(t-s)\Delta} F(s) ds \Big\|_{L^\infty_t L^2_x \cap L^2_t L^{2^*}_x} \lesa \| f\|_{L^2_x} + \| F \|_{L^2_t L^{2_*}_x}$$
by the (double) endpoint Strichartz estimate \cite{Keel1998}. To control the frequency localised Schr\"odinger component of the Zakharov evolution, we take parameters $s, a, b \in \RR$, $\lambda \in 2^\NN$ and define
$$ \| u \|_{S^{s,a, b}_\lambda} = \lambda^s \| u \|_{L^\infty_t L^2_x} + \lambda^{s-2a} \| (\lambda + |\p_t|)^a u \|_{L^2_t L^{2^*}_x} + \lambda^{s-1 + b} \Big\| \Big(\frac{ \lambda + |\p_t|}{\lambda^2 + |\p_t|}\Big)^a (i\p_t + \Delta) u \Big\|_{L^2_{t,x}} .$$

The parameters $a, b\in \RR$ are required to prove the bilinear estimates in the full admissible region \eqref{eqn:cond on s l}. Roughly speaking $a$ measures a loss of regularity in the small temporal frequency regime $|\tau| \ll \lr{\xi}$, for instance (when $b=0$) if $\supp \widetilde{u} \subset \{\tau \lesa \lr{\xi} \approx \lambda\}$ we have
    $$ \lambda^{s-2a} \| (\lambda + |\p_t|)^a u \|_{L^2_t L^{2^*}_x} + \lambda^{s-1 + b} \Big\| \Big(\frac{ \lambda + |\p_t|}{\lambda^2 + |\p_t|}\Big)^a (i\p_t + \Delta) u \Big\|_{L^2_{t,x}} \approx \lambda^{-a} \Big( \lambda^s \| u \|_{L^2_t L^{2^*}_x} + \lambda^{s-1} \| (i\p_t + \Delta) u \|_{L^2_{t,x}}\Big).$$
Thus, when the temporal frequencies are small, the non-$L^\infty_t H^s_x$ component of the norm $S^{s,a,b}_\lambda$ loses $\lambda^{-a}$ derivatives when compared to the standard scaling for the Schr\"odinger equation. On the other hand the $b$ parameter simply gives a gain in regularity in the high-modulation regime, for instance we have $\| P^F u \|_{L^\infty_t H^{s+b}_x} \lesa \| u \|_{S^{s,0,b}}$.

The choice of $a$ and $b$ will depend on $(s, \ell)$, there is some flexibility here, but one option is to choose
        \begin{equation}\label{eqn:choice of a,b}
         a = a^* := \begin{cases} \frac{3}{4}(s-\ell)-\frac{1}{2} &\text{ if }  s -\ell \g 1,  \\
                                        0 &\text{ if } s-\ell<1,  \end{cases}\qquad b=b^* := \begin{cases}0 &\text{ if } s-\ell>0, \\ \frac{1}{2}(\ell -s) + \frac{1}{2} &\text{ if }  s-\ell \les 0.   \end{cases}
        \end{equation}
Thus in the region $ \ell + 1 \les s \les \ell + 2$, when the Schr\"odinger component of the evolution is more regular, we require $a>0$ positive (depending on the size of $s-\ell$) and can take $b=0$. On the other hand,  in the ``balanced region'' $\ell< s< \ell + 1$ we can simply take $a=b=0$. In the final region $\ell -1 \les s \les \ell$, when the wave is more regular, we can take $a=0$ and require $b>0$ positive.

\begin{remark}
It is worth noting that due to the factor $(\lambda^2 + |\p_t|)^{-a} (\lambda + |\p_t|)^a$, the norm $\| \cdot \|_{S^{s,a,b}_\lambda}$ only controls the endpoint Strichartz estimate without loss when $a=0$. In particular, if $0\les a \les 1$, we only have
        \begin{equation}\label{eqn:stri with loss} \lambda^{s-a} \|  u_\lambda \|_{L^2_t L^{2^*}_x} \lesa \lambda^{s-2a} \| (\lambda + |\p_t|)^a u_\lambda \|_{L^2_t L^{2^*}_x} \lesa \| u_\lambda \|_{S^{s, a, 0}_\lambda}. \end{equation}
In view of the choice \eqref{eqn:choice of a,b}, this means that in the region $s-\ell \g 1$ we no longer have control over the endpoint Strichartz space $L^2_t W^{s, 2^*}_x$. On the other hand, in the small modulation regime, we retain control of the endpoint Strichartz space. More precisely, provided that $0\les a \les 1$, an application of Bernstein's inequality gives the characterisation
     \begin{equation}\label{eqn:norm chara}
        \| u_\lambda \|_{S^{s,a,b}_\lambda} \approx \lambda^s \Big(\| u_\lambda \|_{L^\infty_t L^2_x} + \| P^N_\lambda u \|_{L^2_t L^{2^*}_x}\Big) +  \lambda^{s-1 + b} \Big\| \Big(\frac{ \lambda + |\p_t|}{\lambda^2 + |\p_t|}\Big)^a (i\p_t + \Delta) u_\lambda \Big\|_{L^2_{t,x}}.
      \end{equation}
    \end{remark}

      To control the Schr\"odinger nonlinearity we take
      $$ \| F \|_{N^{s,a, b}_\lambda} =  \lambda^{s-2} \|P^{(t)}_{\les (\frac{\lambda}{2^8})^2} F \|_{L^\infty_t L^2_x}+\lambda^s  \| C_{\les (\frac{\lambda}{2^8})^2} F \|_{L^2_t L^{2_*}_x} + \lambda^{s-1 + b} \Big\| \Big(\frac{ \lambda + |\p_t|}{\lambda^2 + |\p_t|}\Big)^a F \Big\|_{L^2_{t,x}}.$$
      \begin{remark}
        In the special case $0\les a<\frac12$ we have
        \begin{equation}\label{eqn:N norm chara}\| F_\lambda \|_{N^{s,a, b}_\lambda}\approx \lambda^s  \| C_{\les (\frac{\lambda}{2^8})^2} F_\lambda \|_{L^2_t L^{2_*}_x} + \lambda^{s-1 + b} \Big\| \Big(\frac{ \lambda + |\p_t|}{\lambda^2 + |\p_t|}\Big)^a F_\lambda \Big\|_{L^2_{t,x}}.
        \end{equation}
        To see this, let $\frac{1}{r} = \frac{1}{2} - a$ and apply Bernstein's inequality together with the Sobolev embedding to obtain
    \begin{align*}
     \lambda^{s-2} \| P^{(t)}_{\les (\frac{\lambda}{2^8})^2} F_\lambda \|_{L^\infty_t L^2_x} &\lesa \lambda^{ s-2+\frac{2}{r}} \| P^{(t)}_{\les (\frac{\lambda}{2^8})^2}   F_\lambda \|_{L^2_x L^r_t} \\
      &\lesa \lambda^{ s-1-2a} \| (\lambda + |\p_t|)^{a} P^{(t)}_{\les (\frac{\lambda}{2^8})^2}   F_\lambda \|_{L^2_{t,x}}
    \end{align*}
       which implies the claim, since $b \g 0$.
\end{remark}

We also require a suitable space in which to control the evolution of the wave component. To this end, for $\ell, \alpha, \beta \in \RR$, we let
    $$ \| V \|_{W^{\ell, \alpha, \beta }_\lambda} = \lambda^\ell \| V \|_{L^\infty_t L^2_x} + \lambda^{\ell -\alpha} \| (\lambda + |\p_t|)^\alpha P^{(t)}_{\les (\frac{\lambda}{2^8})^2} V \|_{L^\infty_t L^2_x} + \lambda^{\beta-1} \| (i\p_t + |\nabla|) V \|_{L^2_{t,x}}.$$
Thus for small temporal frequencies we essentially take $(\frac{\lambda + |\p_t|}{\lambda})^\alpha V \in L^\infty_t H^\ell_x$, while for large temporal frequencies (in the Schr\"odinger like regime) the wave component $V$ has roughly $\beta$ derivatives. Eventually we will take $\alpha=a$ and $\beta=s-\frac{1}{2}$. Consequently, in the high temporal frequency regime, the wave component $V$ essentially inherits the regularity of the Schr\"odinger evolution $u$.
To bound the right-hand side of the half-wave equation at frequency $\lambda$, we define
\[\|G\|_{R^{\ell,\alpha,\beta}_\lambda}=\lambda^{\ell -2} \| G\|_{L^\infty_t L^2_x} + \lambda^{\ell-\alpha} \| ( |\p_t| + \lambda)^\alpha P^{(t)}_{\les (\frac{\lambda}{2^8})^2} G \|_{L^1_t L^2_x} + \lambda^{\beta-1} \|  G \|_{L^2_{t,x}}.\]

\begin{lemma}[Nested embeddings]\label{lem:embed prop} Let $s, a, a', b, b' \in \RR$ with $a' \les a$ and $b' \les b$. Then         $$ \| u_\lambda \|_{S^{s,a,b}_\lambda} \lesa \| u_\lambda \|_{S^{s,a', b}_\lambda}, \qquad \| u_\lambda \|_{S^{s,a,b'}_\lambda} \les \| u_\lambda \|_{S^{s,a,b}_\lambda}. $$ Similarly, if $\ell, \alpha, \alpha', \beta, \beta' \in \RR$ with $\alpha'\les \alpha$ and $\beta' \les \beta$ we have         $$ \| V_\lambda \|_{W^{\ell, \alpha', \beta}_\lambda} \lesa\| V_\lambda \|_{W^{\ell, \alpha, \beta}_\lambda}, \qquad \| V_\lambda \|_{W^{\ell, \alpha, \beta'}_\lambda} \les \| V_\lambda \|_{W^{\ell, \alpha, \beta}_\lambda}. $$ \end{lemma}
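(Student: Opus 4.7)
The second inequality in each claim is immediate: the parameter $b$ (resp.\ $\beta$) enters the norm $S^{s,a,b}_\lambda$ (resp.\ $W^{\ell,\alpha,\beta}_\lambda$) only through the prefactor $\lambda^{s-1+b}$ (resp.\ $\lambda^{\beta-1}$) of the third term, and since $\lambda \in 2^\NN$ satisfies $\lambda \g 1$, this prefactor is monotone non-decreasing in its argument.

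For the first inequality in the $W$-norm claim, only the middle term depends on $\alpha$, and the cutoff $P^{(t)}_{\les(\lambda/2^8)^2}$ restricts $|\p_t|$ to $\les \lambda^2/2^{16}$. Combined with $\lambda+|\p_t|\g \lambda$, this shows the symbol $(\lambda/(\lambda+|\p_t|))^{\alpha-\alpha'}$ is bounded by $1$ on this support and, after composition with the cutoff, defines a temporal Fourier multiplier uniformly bounded in $\lambda$ on $L^\infty_t L^2_x$ (e.g.\ by Mikhlin or by Young's inequality applied to the temporal inverse Fourier transform). Factoring
\[
\lambda^{\ell-\alpha'}(\lambda+|\p_t|)^{\alpha'}P^{(t)}_{\les(\lambda/2^8)^2} = \lambda^{\ell-\alpha}\Big(\frac{\lambda}{\lambda+|\p_t|}\Big)^{\!\alpha-\alpha'}(\lambda+|\p_t|)^\alpha P^{(t)}_{\les(\lambda/2^8)^2}
\]
and applying this bound yields $\|V_\lambda\|_{W^{\ell,\alpha',\beta}_\lambda}\lesa \|V_\lambda\|_{W^{\ell,\alpha,\beta}_\lambda}$.

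For the first inequality in the $S$-norm claim, the $L^\infty_t L^2_x$ term is $a$-independent. The ratio $r=(\lambda+|\p_t|)/(\lambda^2+|\p_t|)$ satisfies $r \les 1$ pointwise on $\{|\xi|\approx \lambda\}$ (using $\lambda \g 1$), so the third term satisfies the required bound by treating $r^{a-a'}$ as a bounded $L^2_{t,x}$-multiplier. For the middle term, decompose $u_\lambda = \PN{\lambda} u+\PF{\lambda} u$. On the low-modulation piece, the constraints $|i\p_t+\Delta|\les (\lambda/2^8)^2$ and $|\xi|\approx \lambda$ force $|\p_t|\lesa \lambda^2$, hence $(\lambda+|\p_t|)^{a-a'}\lesa \lambda^{2(a-a')}$ and the estimate follows by pointwise symbol comparison with the middle term of $\|u_\lambda\|_{S^{s,a',b}_\lambda}$. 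On the high-modulation piece, Bernstein in space gives $\|\cdot\|_{L^{2^*}_x}\lesa \lambda\|\cdot\|_{L^2_x}$, and the lower bound $|i\p_t+\Delta|\g (\lambda/2^8)^2$ available on $\PF{\lambda}$ permits inversion, producing
\[
(\lambda+|\p_t|)^a \PF{\lambda} u = M\bigl[r^{a'}(i\p_t+\Delta)\PF{\lambda} u\bigr],
\]
where $M$ is the Fourier multiplier of symbol $(\lambda+|\tau|)^{a-a'}(\lambda^2+|\tau|)^{a'}/(-\tau-|\xi|^2)$. A short case split according to $|\tau|\les \lambda^2$ versus $|\tau|\g \lambda^2$, using $0\les a\les 1$ (the range relevant to the applications and to the characterisation \eqref{eqn:norm chara}), gives $|M|\lesa \lambda^{2a-2}$ pointwise, which combined with the prefactor $\lambda^{s-2a+1}$ reduces the contribution to $\lambda^{s-1}\|r^{a'}(i\p_t+\Delta)u_\lambda\|_{L^2_{t,x}}$, controlled by the third term of $\|u_\lambda\|_{S^{s,a',b}_\lambda}$.

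The main obstacle is the high-modulation part of the middle term in the $S$-norm, for which no pointwise symbol domination by the middle term of the norm with $a'$ is available: the estimate has to be routed through the third term by inverting $i\p_t+\Delta$. This is precisely where the dual prefactor $(\lambda+|\p_t|)^a(\lambda^2+|\p_t|)^{-a}$ built into the third component of the $S$-norm is tailored, so that the factor $(\lambda^2+|\p_t|)^{a'}$ produced upon undoing $r^{a'}$ cancels the growth of $(\lambda+|\p_t|)^a$ at large $|\p_t|$ and yields a uniform pointwise bound on $M$.
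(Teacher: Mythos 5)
Your argument is correct, but it takes a more self-contained route than the paper: the paper disposes of the $a$-monotonicity in one line by citing the characterisation \eqref{eqn:norm chara}, in which the only $a$-dependence left is the weight $\big(\tfrac{\lambda+|\p_t|}{\lambda^2+|\p_t|}\big)^a$ in the $L^2_{t,x}$ component, so monotonicity is immediate from $\lambda+|\tau|\les\lambda^2+|\tau|$ and Plancherel; the remaining claims are declared clear from the definitions. What you do is essentially re-derive the content of \eqref{eqn:norm chara} inline: the split $u_\lambda=P^N_\lambda u+P^F_\lambda u$, the observation $|\tau|\lesa\lambda^2$ on the low-modulation piece, and Bernstein plus inversion of $i\p_t+\Delta$ (your multiplier $M$, correctly bounded by $\lambda^{2a-2}$ for $0\les a\les 1$) on the high-modulation piece — this is exactly the unwritten proof of the characterisation, so the underlying mechanism coincides, and your version has the virtue of making explicit why the $(\lambda^2+|\p_t|)^{-a}$ factor in the third component is what saves the high-modulation Strichartz piece. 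Two points of care: first, in the mixed norms $L^\infty_tL^2_x$ and $L^2_tL^{2^*}_x$ a pointwise Fourier-symbol comparison is not by itself a proof (and Mikhlin is unavailable for $L^\infty_t$-based norms); both your $W$-step and your low-modulation $S$-step need the standard fact that the temporal symbols $\lambda^{\gamma}(\lambda+|\tau|)^{-\gamma}$ and $\lambda^{-2\gamma}(\lambda+|\tau|)^{\gamma}P^{(t)}_{\lesa\lambda^2}$, $\gamma=a-a'\g0$, have inverse Fourier transforms in $L^1_t$ uniformly in $\lambda$ (e.g.\ via the subordination $(1+|\tau|)^{-\gamma}=c_\gamma\int_0^\infty e^{-s}s^{\gamma-1}e^{-s|\tau|}\,ds$ and Young), together with the uniform $L^p_tL^q_x$-boundedness of $P^N_\lambda$, $P^F_\lambda$ noted in Subsection \ref{subsec:fm}. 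Second, your argument (like the paper's, since \eqref{eqn:norm chara} is only stated for $0\les a\les 1$ and its proof also uses $b\g0$ to absorb the high-modulation Strichartz contribution into the third component) proves the lemma only in the range $0\les a'\les a\les 1$, $b\g0$, not for arbitrary real parameters as literally stated; this matches the range \eqref{eqn:choice of a,b} actually used, so it is a shared caveat rather than a defect of your proof.
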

\begin{proof}
The first claim follows from the characterisation \eqref{eqn:norm chara}.
The remaining inequalities are clear from the definitions.
\end{proof}

To control the evolution of the full solution, we sum the dyadic terms in $\ell^2$, and define the norms
        $$ \| u \|_{S^{s,a,b}} = \Big( \sum_{\lambda \in 2^\NN} \| u_\lambda \|_{S^{s,a, b}_\lambda}^2 \Big)^\frac{1}{2}, \qquad \| F \|_{N^{s,a, b}} = \Big( \sum_{\lambda \in 2^\NN} \| F_\lambda \|_{N^{s, a, b}_\lambda}^2 \Big)^\frac{1}{2} $$
and
        $$\| V \|_{W^{\ell, \alpha,\beta}} = \Big( \sum_{\lambda \in 2^\NN} \| V_\lambda \|_{W^{\ell, \alpha,\beta}_\lambda}^2 \Big)^\frac{1}{2},\qquad \| G \|_{R^{\ell,\alpha,\beta}} = \Big( \sum_{\lambda \in 2^\NN} \| G_\lambda \|_{R^{\ell,\alpha,\beta}_\lambda}^2 \Big)^\frac{1}{2}+\|G_{\les 2^{16}}\|_{L^1_tL^2_x}.$$
Then, we define the corresponding spaces as the  collection of all tempered distributions with finite norm.

Let $I\subset \RR$ be an open interval, i.e. a connected open subset of the real line $\RR$. We localise the norms and spaces to time intervals $I \subset \RR$ via restriction norms. For instance, we define the restriction norm
$$ \| u \|_{S^{s,a,b}(I)} = \inf_{ u'\in S^{s,a,b} \text{ and } u'|_I = u} \| u'\|_{S^{s,a,b}},$$
provided that such an extension $u'\in S^{s,a, b}$ exists.
The norms  $\| \cdot \|_{N^{s,a,b}(I)}$, $\|\cdot \|_{W^{\ell,\alpha,\beta} (I)}$, and $\| \cdot \|_{R^{\ell,\alpha,\beta}(I)}$ and the corresponding spaces are defined similarly.

\subsection{Duhamel formulae and energy inequalities.}\label{subsec:duhamel}

The  solution operator for the inhomogeneous Schr\"odinger equation is denoted by
        $$\mc{I}_0[F](t) = -i \int_0^t e^{i(t-s) \Delta} F(s) ds .$$
For a general potential $V \in L^\infty_t L^2_x$, we let
        $$ \mc{I}_V[F](t) = - i \int_0^t \mc{U}_V(t,s) F(s) ds$$
where $\mc{U}_V(t,s) f$ denotes the homogeneous solution operator for the Cauchy problem
        $$  ( i \p_t + \Delta - \Re{V}) u = 0, \qquad u(s) = f. $$
        We show later that the operators $\mc{U}_V$ and $\mc{I}_V$ are well-defined on suitable function spaces, provided only that $V \approx  e^{it\Delta} f \in L^\infty_t H^{\frac{d-4}{2}}_x$, i.e. $V$ is close to a $L^\infty_t H^{\frac{d-4}{2}}_x$ solution to the wave equation.

        Similarly, we define the solution operator for the inhomogeneous half-wave equation by
        \[
\mc{J}_0[F](t) =-i \int_0^t e^{i(t-s) |\nabla|} F(s) ds.
\]

We record here two straightforward energy inequalities which we exploit in the sequel.

\begin{lemma}\label{lem:energy ineq}
  Let $s \in \RR $, $0\les a, b \les 1$.  For any $\lambda \in 2^\NN$ we have
\[\|e^{it \Delta} f_\lambda\|_{S^{s,a,b}_\lambda}\lesa \lambda^s \|f_\lambda\|_{L^2_x} \]
  and
          $$ \| \mc{I}_0[F_\lambda] \|_{S^{s,a,b}_\lambda} \lesa   \| F_\lambda \|_{N^{s, a, b}_\lambda}. $$
Moreover, if $0\in I \subset \RR$ is an open interval and $F \in N^{s,a,b}(I)$, then $\mc{I}_0[F] \in C(I, H^s)$.
\end{lemma}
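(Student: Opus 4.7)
The plan is to verify each of the three terms making up the $S^{s,a,b}_\lambda$ norm separately, relying on unitarity, the double endpoint Strichartz estimate of Keel--Tao, and the spacetime Fourier structure of $e^{it\Delta} f_\lambda$ and $\mc{I}_0[F_\lambda]$.

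For the homogeneous bound, the $L^\infty_t L^2_x$ piece is immediate from unitarity and the $L^2_t L^{2^*}_x$ piece from the endpoint Strichartz estimate. The multiplier $(\lambda + |\p_t|)^a$ only costs $\lambda^{2a}$, because $e^{it\Delta} f_\lambda$ has spacetime Fourier support on the paraboloid $\{\tau = -|\xi|^2\}$ with $|\xi| \approx \lambda$ and hence $|\tau| \approx \lambda^2$; this cancels the weight $\lambda^{s-2a}$. The third term vanishes identically since $(i\p_t + \Delta) e^{it\Delta} f_\lambda \equiv 0$.

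For the inhomogeneous bound, I first observe that $(i\p_t + \Delta)\mc{I}_0[F_\lambda] = F_\lambda$ by construction, so the third component of $\|\mc{I}_0[F_\lambda]\|_{S^{s,a,b}_\lambda}$ equals the third component of $\|F_\lambda\|_{N^{s,a,b}_\lambda}$ exactly. For the remaining two components, I decompose $F_\lambda = F^N + F^F$ according to modulation, with $F^N := C_{\les (\lambda/2^8)^2} F_\lambda$ and $F^F := C_{>(\lambda/2^8)^2} F_\lambda$. The low-modulation piece is controlled by the double endpoint Strichartz estimate, $\|\mc{I}_0[F^N]\|_{L^\infty_t L^2_x \cap L^2_t L^{2^*}_x} \lesa \|F^N\|_{L^2_t L^{2_*}_x}$; the weight $(\lambda + |\p_t|)^a$ again costs only $\lambda^{2a}$, since $\PN{\lambda} \mc{I}_0[F^N]$ has $|\tau| \lesa \lambda^2$, while $\PF{\lambda} \mc{I}_0[F^N]$ is essentially a free Schr\"odinger wave (because $(i\p_t + \Delta)$ annihilates it once the modulation cutoff of $F^N$ is applied). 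For the high-modulation piece $F^F$, the symbol $-\tau - |\xi|^2$ has modulus $\gtrsim \lambda^2$ on its Fourier support, so it is invertible: setting $\widehat V := -i \widehat{F^F}/(-\tau - |\xi|^2)$ gains a factor $\lambda^{-2}$ in $L^2_{t,x}$, and $\mc{I}_0[F^F] = V - e^{it\Delta} V(0)$. To upgrade from $L^2_{t,x}$ to $L^\infty_t L^2_x$ for $V$ (and for the trace $V(0)$), I split $V$ in temporal frequency: on $P^{(t)}_{\les (\lambda/2^8)^2}$ the first term of the $N$ norm provides the required $L^\infty_t L^2_x$ control directly, while on $P^{(t)}_{>(\lambda/2^8)^2}$ Bernstein in time converts the $L^2_{t,x}$ bound into $L^\infty_t L^2_x$ at a cost absorbed by the modulation gain.

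The continuity statement is then proved by approximating $F \in N^{s,a,b}(I)$ by test functions in the $N$ norm (for which $\mc{I}_0[F]$ is manifestly in $C(I, H^s)$) and passing to the limit using the bound just established. The main technical obstacle is the $L^\infty_t L^2_x$ control of $\mc{I}_0[F^F_\lambda]$, since only $L^2_{t,x}$ information on $F^F$ is available, whereas a direct energy inequality would require $L^1_t L^2_x$. The first term $\lambda^{s-2} \|P^{(t)}_{\les (\lambda/2^8)^2} F_\lambda\|_{L^\infty_t L^2_x}$ in the $N$ norm is tailored precisely to cover the low-temporal-frequency portion of $F^F$, with the complementary high-temporal-frequency portion dispensed with via Bernstein in time combined with the modulation gain.
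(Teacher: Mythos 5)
Your overall route is in substance the paper's own: your symbol--division solution $V$ is exactly the paper's $e^{it\Delta}H$ with $H=\p_t^{-1}P^{(t)}_{>\mu}[e^{-it\Delta}F^F]$, your identity $\mc{I}_0[F^F]=V-e^{it\Delta}V(0)$ is the identity used there, and your temporal--frequency split of $V$ (low frequencies against the $\lambda^{s-2}\|P^{(t)}_{\les(\lambda/2^8)^2}F_\lambda\|_{L^\infty_tL^2_x}$ term of $N^{s,a,b}_\lambda$, high frequencies via Bernstein in time against the $L^2_{t,x}$ term) is the same as in the paper. One caveat on that part: the assertion that on $P^{(t)}_{\les(\lambda/2^8)^2}$ ``the first term of the $N$ norm provides the required $L^\infty_tL^2_x$ control directly'' silently uses that the division operator restricted to modulation $\gtrsim \lambda^2$ is bounded on $L^\infty_tL^2_x$ with operator norm $\lesa\lambda^{-2}$; this is true, but it is precisely the kernel computation behind the paper's bound \eqref{eqn:lem energy ineq:Linf bound} (an $L^1_t$ bound for the kernel of $\p_t^{-1}P^{(t)}_{>\mu}$ after conjugation by $e^{it\Delta}$), and it should be stated rather than taken for granted.

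There is, however, a genuine gap: for the high--modulation piece you never estimate the second component $\lambda^{s-2a}\|(\lambda+|\p_t|)^a\mc{I}_0[F^F]\|_{L^2_tL^{2^*}_x}$ of the $S^{s,a,b}_\lambda$ norm. The $L^\infty_tL^2_x$ bounds for $V$ and $V(0)$ give no global-in-time $L^2_tL^{2^*}_x$ information (there is no decay), so ``upgrading $L^2_{t,x}$ to $L^\infty_tL^2_x$'' does not finish the norm bound; only the free part $e^{it\Delta}V(0)$ is covered by Strichartz. Two standard repairs: (a) invoke the characterisation \eqref{eqn:norm chara}, by which only $\lambda^s\|P^N_\lambda \mc{I}_0[F_\lambda]\|_{L^2_tL^{2^*}_x}$ is needed; setting $w=P^N_\lambda\mc{I}_0[F_\lambda]$ one has $(i\p_t+\Delta)w=C_{\les(\lambda/2^8)^2}F_\lambda$ and $w=e^{it\Delta}w(0)+\mc{I}_0[(i\p_t+\Delta)w]$, so the double endpoint Strichartz estimate bounds this term by $\|w\|_{L^\infty_tL^2_x}$ (already controlled) plus the second component of $N^{s,a,b}_\lambda$ --- this is what the paper's sentence ``it suffices to bound the high-modulation contribution in $L^\infty_tL^2_x$'' encodes; or (b) bound the weighted Strichartz norm of $V$ directly: Bernstein in space costs $\lambda$, and on $\supp\widetilde{F^F}$ one has $(\lambda+|\tau|)^a=\big(\tfrac{\lambda+|\tau|}{\lambda^2+|\tau|}\big)^a(\lambda^2+|\tau|)^a\lesa\big(\tfrac{\lambda+|\tau|}{\lambda^2+|\tau|}\big)^a|\tau+|\xi|^2|^a$, so $\lambda^{s-2a}\|(\lambda+|\p_t|)^aV\|_{L^2_tL^{2^*}_x}\lesa\lambda^{s-1}\|\big(\tfrac{\lambda+|\p_t|}{\lambda^2+|\p_t|}\big)^aF_\lambda\|_{L^2_{t,x}}\lesa\lambda^{-b}\|F_\lambda\|_{N^{s,a,b}_\lambda}$. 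Note that the cruder route of replacing the weight by $\lambda^{2a}$ and using only $\|F^F\|_{L^2_{t,x}}$ loses a factor $\lambda^{a-b}$, so the weight/modulation comparison is genuinely needed here. A secondary issue: your continuity argument via density of test functions in the $N^{s,a,b}$ norm is delicate, since that norm contains an $L^\infty_tL^2_x$ component in which test functions are not dense; the paper instead uses $F_\lambda\in L^1_{t,loc}L^2_x$ and dominated convergence, which is the safer route.
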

\begin{proof}
  The estimate for the free solutions follows from the fact that the temporal frequency is of size $\lambda^2$ and the endpoint Strichartz estimate.

In order to prove the estimate for the Duhamel term, in view of the characterisation \eqref{eqn:norm chara} it suffices to bound the high-modulation contribution $ \lambda^s \| \mc{I}_0[ P^F_\lambda F] \|_{L^\infty_t L^2_x}$ due to the (double) endpoint Strichartz estimate. To this end, we first claim that for any $\mu>0$ and $G \in L^\infty_t L^2_x$ we have
    \begin{equation}\label{eqn:lem energy ineq:Linf bound} \| \mc{I}_0[ C_{> \mu} G ] \|_{L^\infty_t L^2_x} \lesa \mu^{-1} \| C_{> \mu} G \|_{L^\infty_t L^2_x}. \end{equation}
Assuming \eqref{eqn:lem energy ineq:Linf bound} for the moment, we conclude that
        \begin{equation}\label{eqn:lem energy ineq:low temp freq} \| \mc{I}_0[C_{> ( \frac{\lambda}{2^8})^2} F_\lambda] \|_{L^\infty_t L^2_x} \lesa \lambda^{-2} \| C_{> ( \frac{\lambda}{2^8})^2} F_\lambda \|_{L^\infty_t L^2_x}. \end{equation}
To improve this, we again use \eqref{eqn:lem energy ineq:Linf bound} and observe that
    \begin{align*}
        \| \mc{I}_0[ P^{(t)}_{> (\frac{\lambda}{2^8})^2} C_{> ( \frac{\lambda}{2^8})^2} F_\lambda] \|_{L^\infty_t L^2_x}
                        &\les  \sum_{\nu \gtrsim \lambda^2}
        \| \mc{I}_0[ P^{(t)}_{\nu} C_{\approx \nu} C_{> ( \frac{\lambda}{2^8})^2} F_\lambda] \|_{L^\infty_t L^2_x} \\
                        &\lesa  \sum_{ \nu \gtrsim \lambda^2 } \nu^{-1} \| P^{(t)}_{\nu} C_{> ( \frac{\lambda}{2^8})^2} F_\lambda \|_{L^\infty_t L^2_x} \\
                        &\lesa  \sum_{ \nu \gtrsim \lambda^2 } \nu^{ -\frac{1}{2}} \| P^{(t)}_{\nu} C_{> ( \frac{\lambda}{2^8})^2} F_\lambda \|_{L^2_{t,x}}  \lesa \lambda^{-s-b} \| F_\lambda \|_{N^{s, a,b}_\lambda}.
    \end{align*}
Hence the claimed inequality follows.

To complete the proof of the norm bounds, it only remains to verify the claimed bound \eqref{eqn:lem energy ineq:Linf bound}. Define $H(t) = (\p_t^{-1} P^{(t)}_{>\mu}[ e^{- it \Delta} G])(t)$. A computation gives the $L^\infty_t L^2_x$ bound
  $$
        \| H \|_{L^\infty_t L^2_x} \lesa \mu^{-1} \| e^{-it\Delta} G \|_{L^\infty_t L^2_x} = \mu^{-1} \| G \|_{L^\infty_t L^2_x}
   $$
and, since $C_{>\mu} G = e^{it\Delta} P^{(t)}_{>\mu} [e^{-it \Delta} G]$, the identity
   $$
    \p_t H(t) = P^{(t)}_{>\mu}[ e^{-it\Delta}G] (t) = e^{ - i t \Delta} C_{>\mu} G.
   $$
Therefore the bound \eqref{eqn:lem energy ineq:Linf bound} follows by writing $ \mc{I}_0[ C_{> \mu} G](t)= \mc{I}_0[e^{  i t \Delta} \p_t H](t) = -ie^{ i t \Delta} ( H(t) - H(0) )$.

We now turn to the proof of continuity. In view of the definition of
the time restricted space $N^{s,a,b}(I)$, it suffices to
consider the case $I=\RR$. Moreover, the norm bound proved implies
that it is enough to prove that if $\lambda \in 2^\NN$ and $F_\lambda
\in N^{0,a,b}$ then  $\mc{I}_0[F_\lambda] \in C(I,
L^2)$. If $\|F_\lambda\|_{N^{s,a,b}_\lambda}<\infty$ for $a,b \g 0$, then
$F_\lambda\in L^1_{t,loc}L^2_x$ and the continuity
follows from the dominated convergence theorem.
\end{proof}

The energy inequality has the following useful consequence.

\begin{lemma}\label{lem:scattering}
Let $s, a, b \in \RR $, $b \g 0$. If $F\in N^{s,a,b}$ then
    $$ \lim_{\substack{t, t' \to \infty}} \Big\| \int_t^{t'} e^{-is\Delta} F(s) ds \Big\|_{H^s} = 0. $$
\end{lemma}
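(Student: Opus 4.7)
The plan is to reduce to per-frequency scattering and then handle low and high modulation separately. Writing
\[ \int_t^{t'} e^{-is\Delta}F(s) \,ds = i\bigl(e^{-it\Delta}\mc{I}_0[F](t) - e^{-it'\Delta}\mc{I}_0[F](t')\bigr), \]
Lemma~\ref{lem:energy ineq} furnishes the uniform bound $\lambda^{2s}\bigl\|\int_t^{t'} e^{-is\Delta}F_\lambda \,ds\bigr\|_{L^2_x}^2 \lesa \|F_\lambda\|_{N^{s,a,b}_\lambda}^2$, so dominated convergence in the dyadic summation reduces the claim to the per-frequency statement $\bigl\|\int_t^{t'} e^{-is\Delta}F_\lambda\,ds\bigr\|_{L^2_x}\to 0$ for each $\lambda \in 2^\NN$. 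Fixing $\lambda$ and setting $\mu := (\lambda/2^8)^2$, I would split $F_\lambda = C_{\le\mu}F_\lambda + C_{>\mu}F_\lambda$. The low-modulation contribution is immediate from dual endpoint Strichartz,
\[ \Bigl\|\int_t^{t'} e^{-is\Delta}C_{\le\mu}F_\lambda\,ds\Bigr\|_{L^2_x}\lesa \|C_{\le\mu}F_\lambda\|_{L^2([t,t'],L^{2_*}_x)}\to 0, \]
by absolute continuity of the integral, since the second piece of $\|F_\lambda\|_{N^{s,a,b}_\lambda}$ places $C_{\le\mu}F_\lambda \in L^2_t L^{2_*}_x$ globally.

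For the high-modulation piece $G := C_{>\mu}F_\lambda$ I would adapt the Fourier antiderivative used in the proof of Lemma~\ref{lem:energy ineq}. A direct computation with $\widetilde{e^{-is\Delta}G}(\tau,\xi) = \widetilde{G}(\tau-|\xi|^2,\xi)$ shows that the modulated function $\Phi := e^{-is\Delta}G$ has temporal Fourier support in $\{|\tau|>\mu\}$, so the antiderivative $H := \p_t^{-1}\Phi$, defined by the multiplier $(i\tau)^{-1}\ind_{|\tau|>\mu}$, is well-defined and satisfies $\p_t H = \Phi$. Hence
\[ \int_t^{t'} e^{-is\Delta}G(s)\,ds = H(t')-H(t), \]
and it suffices to prove $H(t)\to 0$ in $L^2_x$ as $|t|\to\infty$. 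Decomposing dyadically $H = \sum_{\nu > \mu/2}H_\nu$ with $H_\nu := P^{(t)}_\nu H$, each $\widetilde{H_\nu}(\tau,\xi)$ has $\tau$-support in $|\tau|\approx \nu$, and Cauchy--Schwarz in $\tau$ combined with Plancherel gives
\[ \|\widetilde{H_\nu}\|_{L^1_\tau L^2_\xi} \lesa \nu^{1/2}\|\widetilde{H_\nu}\|_{L^2_{\tau,\xi}} \lesa \nu^{-1/2}\|C_{\approx\nu} G\|_{L^2_{t,x}}, \]
so the vector-valued Riemann--Lebesgue lemma applied in the Banach space $L^2_x$ yields $\|H_\nu(t)\|_{L^2_x}\to 0$ as $|t|\to\infty$ for each $\nu$.

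To conclude I would transfer the per-$\nu$ decay to $H(t)\to 0$ by uniform summability. Cauchy--Schwarz in $\nu$ with modulation-orthogonality of the $C_{\approx \nu}$ decomposition yields $\sum_\nu \|H_\nu\|_{L^\infty_t L^2_x} \lesa \lambda^{-1}\|G\|_{L^2_{t,x}}$, and the third piece of $\|F_\lambda\|_{N^{s,a,b}_\lambda}$ together with the pointwise bound $\bigl(\tfrac{\lambda+|\p_t|}{\lambda^2+|\p_t|}\bigr)^a \gtrsim \lambda^{-a}$ shows this is finite, paralleling the bookkeeping in the proof of Lemma~\ref{lem:energy ineq}. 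Absolute summability then converts the individual $H_\nu(t) \to 0$ into $H(t)\to 0$ in $L^2_x$, completing the per-frequency claim. The principal technical obstacle is precisely this last $\nu$-summation: the $L^\infty_t L^2_x$ control on each $H_\nu$ must be matched against the weighted $L^2_{t,x}$ norm appearing in the third piece of the $N$-norm---the same bookkeeping that underlies the $L^\infty_t H^s_x$ estimate in Lemma~\ref{lem:energy ineq}.
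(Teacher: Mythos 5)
Your proof is correct and takes essentially the same route as the paper: the identical reduction via Lemma \ref{lem:energy ineq} to a fixed dyadic frequency, the same low/high modulation splitting with the dual endpoint Strichartz estimate for the low-modulation piece, and the same temporal-antiderivative plus vector-valued Riemann--Lebesgue device for the high-modulation piece, your dyadic-in-$\tau$ decomposition with absolute summability merely replacing the paper's single cutoff $P^{(t)}_{\les M}$ followed by sending $M\to\infty$. The only cosmetic imprecision is that for $a<0$ the weight $\big(\tfrac{\lambda+|\p_t|}{\lambda^2+|\p_t|}\big)^a$ is bounded below by $1$ rather than by $\lambda^{-a}$, which still gives $C_{>(\lambda/2^8)^2}F_\lambda\in L^2_{t,x}$ for each fixed $\lambda$, so your conclusion stands.
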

\begin{proof}
After writing $\int_t^{t'} e^{ - i s\Delta} F(s) ds = e^{-it'\Delta} \mc{I}_0[F](t') - e^{-it\Delta}\mc{I}_0[F](t)$, the energy inequality in Lemma \ref{lem:energy ineq} implies that it suffices to prove that for every $\lambda \in 2^\NN$ we have
    $$ \lim_{t, t' \to \infty} \Big\| \int_t^{t'} e^{-is\Delta} F_\lambda(s) ds \Big\|_{L^2_x} = 0. $$
We decompose into low and high modulation contributions $F_\lambda = P^N_\lambda F + P^F_\lambda F$. For the former term, we observe that the endpoint Strichartz estimate gives
    $$ \Big\| \int_t^{t'} e^{-is\Delta} P^N_\lambda F(s) ds \Big\|_{L^2_x} \lesa \| P^N_\lambda F \|_{L^2_t L^{2_*}_x( (t, t')\times \RR^d)}$$
which  vanishes as $t, t' \to \infty$ since $P^N_\lambda F \in L^2_t L^{2_*}_x$. For the remaining high modulation contribution $P^F_\lambda F$, we let $G(t) = \p_t^{-1} P^{(t)}_{\gtrsim \lambda^2} ( e^{-it\Delta} P^F_\lambda F)$. Then $e^{-it\Delta} P^F_\lambda F = \p_t G$ and therefore an application of Sobolev embedding gives, uniformly for $M \g 1$,
    \begin{align*}
      \Big\| \int_t^{t'} e^{-is\Delta} P^F_\lambda F(s) ds \Big\|_{L^2_x} &= \| G(t') - G(t) \|_{L^2_x} \\
                            &\les \| (P^{(t)}_{\les M} G)(t') \|_{L^2_x} + \| (P^{(t)}_{\les M} G)(t) \|_{L^2_x}+\| (P^{(t)}_{> M} G)(t') \|_{L^2_x}+\| (P^{(t)}_{> M} G)(t) \|_{L^2_x}\\
                            &\lesa_\lambda \| (P^{(t)}_{\les M} G)(t') \|_{L^2_x} + \| (P^{(t)}_{\les M} G)(t) \|_{L^2_x} + \| C_{> M} P^F_\lambda F \|_{L^2_{t,x}}.
    \end{align*}
Since $\widetilde{P^{(t)}_{\les M} G} \in L^1_{\tau} L^2_\xi$ and $P^F_\lambda F \in L^2_{t,x}$, for any $\epsilon>0$, by choosing $M$ sufficiently large, and letting $t, t' \to \infty$ the Riemann-Lebesgue lemma implies that
    $$ \limsup_{t, t' \to \infty} \Big\| \int_t^{t'} e^{-is\Delta} P^F_\lambda F(s) ds \Big\|_{L^2_x}  \les \epsilon. $$
As this holds for every $\epsilon>0$, result follows.
\end{proof}

We also require an energy type inequality for the wave equation.

\begin{lemma}\label{lem:energy ineq wave}
Let $0\les \alpha \les 1$ and $\beta, \ell \in \RR$.
Then, for all $\lambda \in 2^{\NN}$,
$$ \| e^{it|\nabla|}g_\lambda \|_{W^{\ell, \alpha, \beta }_\lambda} \lesa \lambda^\ell \|g_\lambda \|_{L^2}, $$
and for $\lambda>2^{16}$,
$$ \| \mc{J}_0[G_\lambda] \|_{W^{\ell, \alpha, \beta }_\lambda} \lesa  \|G_\lambda\|_{R^{\ell,\alpha,\beta}_\lambda}.$$
Moreover, if $0\in I \subset \RR$ is an open interval and $G \in R^{\ell,\alpha,\beta}(I)$, then $\mc{J}_0[G] \in C(I, H^\ell)$.
      \end{lemma}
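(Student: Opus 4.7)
The plan is to mirror the structure of Lemma~\ref{lem:energy ineq}, separating the homogeneous and Duhamel contributions and decomposing the inhomogeneity into temporally low- and high-frequency pieces. For the free half-wave bound, on the spacetime Fourier side $\widehat{e^{it|\nabla|}g_\lambda}$ is supported on the cone $\tau=|\xi|$ with $|\xi|\sim\lambda$, where the temporal multiplier $(\lambda+|\partial_t|)^\alpha P^{(t)}_{\les(\lambda/2^8)^2}$ reduces to multiplication by $(\lambda+|\xi|)^\alpha\mathbf{1}_{|\xi|\les(\lambda/2^8)^2}$, which is bounded by $\lambda^\alpha$ on the support of $\widehat g_\lambda$. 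Together with the annihilation $(i\partial_t+|\nabla|)e^{it|\nabla|}g_\lambda=0$ of the third term, this gives the first inequality uniformly in $\lambda\in 2^\NN$.

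For the Duhamel bound, fix $\lambda>2^{16}$, set $V=\mathcal{J}_0[G_\lambda]$, and split $G_\lambda=G^N+G^F$ with $G^N=P^{(t)}_{\les(\lambda/2^8)^2}G_\lambda$ and $G^F=P^{(t)}_{>(\lambda/2^8)^2}G_\lambda$, and correspondingly $V=V^N+V^F$. The third $W$-norm term is immediate from $(i\partial_t+|\nabla|)V=G_\lambda$. The rest of the argument uses the retarded operator $V_{\mathrm{ret}}[F](t):=-i\int_{-\infty}^t e^{i(t-s)|\nabla|}F(s)\,ds$, which commutes with any temporal Fourier multiplier $M$, satisfies $\|MV_{\mathrm{ret}}[F]\|_{L^\infty_tL^2_x}\lesa\|MF\|_{L^1_tL^2_x}$, and is related to Duhamel by $\mathcal{J}_0[F]=V_{\mathrm{ret}}[F]-e^{it|\nabla|}V_{\mathrm{ret}}[F](0)$.

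For the low-modulation contribution, both summands of $V^N$ have temporal Fourier support in $\{|\tau|\les(\lambda/2^8)^2\}$ (the retarded piece inherits this from $G^N$; the boundary correction $e^{it|\nabla|}V_{\mathrm{ret}}[G^N](0)$ has $|\tau|=|\xi|\sim\lambda<(\lambda/2^8)^2$), so $P^{(t)}_{\les(\lambda/2^8)^2}V^N=V^N$. Applying the energy bound with $M=(\lambda+|\partial_t|)^\alpha$, handling the boundary correction via $(\lambda+|\partial_t|)^\alpha e^{it|\nabla|}f=e^{it|\nabla|}(\lambda+|\nabla|)^\alpha f\approx\lambda^\alpha e^{it|\nabla|}f$ at spatial frequency $\sim\lambda$, and invoking the $L^1_t$-boundedness of $(1+|\partial_t|/\lambda)^{-\alpha}$ (a Bessel-type kernel, integrable for $\alpha\in[0,1]$), I obtain
\[
\lambda^{\ell-\alpha}\|(\lambda+|\partial_t|)^\alpha V^N\|_{L^\infty_tL^2_x}\lesa\lambda^{\ell-\alpha}\|(\lambda+|\partial_t|)^\alpha G^N\|_{L^1_tL^2_x},
\]
which controls both the first and middle $W$-norm contributions from $V^N$ by the middle $R$-term.

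For the high-modulation contribution I adapt the $H(t)$-trick of Lemma~\ref{lem:energy ineq}. Since $G^F$ has $|\tau|>(\lambda/2^8)^2\g 2\lambda$, the shifted function $e^{-it|\nabla|}G^F$ has temporal frequency $\gtrsim\lambda^2$, so $H(t):=\partial_t^{-1}[e^{-it|\nabla|}G^F](t)$ is well defined with $\|H\|_{L^\infty_tL^2_x}\lesa\lambda^{-2}\|G^F\|_{L^\infty_tL^2_x}$ by the same dyadic summation. The identity $V_{\mathrm{ret}}[G^F]=-ie^{it|\nabla|}H$ then yields $\lambda^\ell\|V^F\|_{L^\infty_tL^2_x}\lesa\lambda^{\ell-2}\|G^F\|_{L^\infty_tL^2_x}$. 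The main obstacle is the middle term: a naive Bernstein-in-time bound wastes $\lambda^{2\alpha}$. The key observation is that $V_{\mathrm{ret}}[G^F]$ inherits the high temporal modulation of $G^F$, so $P^{(t)}_{\les(\lambda/2^8)^2}V_{\mathrm{ret}}[G^F]=0$ and hence
\[
P^{(t)}_{\les(\lambda/2^8)^2}V^F=-e^{it|\nabla|}V_{\mathrm{ret}}[G^F](0)
\]
is a pure free half-wave at spatial frequency $\sim\lambda$; $(\lambda+|\partial_t|)^\alpha$ then costs only a factor $\lambda^\alpha$, producing the required $\lambda^{\ell-2}\|G^F\|_{L^\infty_tL^2_x}$ bound after the $\lambda^{\ell-\alpha}$ weight. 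Continuity of $\mathcal{J}_0[G]$ into $H^\ell$ on $I$ follows as in Lemma~\ref{lem:energy ineq}, since the $R$-norm forces $G\in L^1_{t,\mathrm{loc}}L^2_x$, so the Duhamel integral is continuous into $L^2_x$ and, by the dyadic norm bound just established, into $H^\ell$.
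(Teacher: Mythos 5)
Your argument is correct and establishes all the required bounds, but it takes a genuinely different route from the paper for the delicate middle component of the $W^{\ell,\alpha,\beta}_\lambda$-norm. The paper keeps the full $G_\lambda$, splits the \emph{output} temporal frequencies into $|\tau|\lesa\lambda$ and $\lambda\ll|\tau|\ll\lambda^2$, and disposes of the second region by two soft uniqueness arguments: the contribution of the high-modulation part of $G_\lambda$ there is a free half-wave with temporal frequency $\approx\lambda$, hence annihilated by $P^{(t)}_{\gg\lambda}$, and the commutator $(\lambda+|\p_t|)^\alpha\mc{J}_0[P^{(t)}_{\ll\lambda^2}G_\lambda]-\mc{J}_0[(\lambda+|\p_t|)^\alpha P^{(t)}_{\ll\lambda^2}G_\lambda]$ is likewise a free half-wave, which yields the commutation of the weight with $\mc{J}_0$. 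You instead split the \emph{input} $G_\lambda=G^N+G^F$ and use the retarded parametrix $\mc{J}_0[F]=V_{\mathrm{ret}}[F]-e^{it|\nabla|}V_{\mathrm{ret}}[F](0)$: since $V_{\mathrm{ret}}$ is a temporal convolution, it commutes with every temporal multiplier and preserves temporal Fourier support, so both of the paper's uniqueness steps become explicit identities; what the paper buys by avoiding $V_{\mathrm{ret}}$ is that it never has to discuss convergence of an integral from $t=-\infty$. Two points to tighten, both at the level of imprecision already present in the paper's own write-up: for $G^F$ the retarded integral is not absolutely convergent (only $L^\infty_tL^2_x\cap L^2_{t,x}$ control is available), so you should define $V_{\mathrm{ret}}[G^F]:=-ie^{it|\nabla|}H$ via your $H$-trick and work with $\mc{J}_0[G^F]=-ie^{it|\nabla|}\big(H-H(0)\big)$; and $P^{(t)}_{\les(\lambda/2^8)^2}V_{\mathrm{ret}}[G^F]$ is not exactly zero, because the symbol of $P^{(t)}_{\les(\lambda/2^8)^2}$ is supported up to twice the cutoff while $G^F$ lives above it (the paper's assertion that $d_\lambda$ solves the homogeneous equation has exactly the same defect). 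The overlap annulus is harmless: $P^{(t)}_{\les(\lambda/2^8)^2}V_{\mathrm{ret}}[G^F]=V_{\mathrm{ret}}\big[P^{(t)}_{\les(\lambda/2^8)^2}G^F\big]$, and this input is still controlled in $(\lambda+|\p_t|)^\alpha$-weighted $L^1_tL^2_x$ by the middle $R$-term, so the $L^1_tL^2_x\to L^\infty_tL^2_x$ energy bound closes that case as well; similar dyadic-constant care is needed at the smallest admissible $\lambda$ when you assert that $e^{-it|\nabla|}G^F$ has temporal frequency $\gtrsim\lambda^2$.
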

      \begin{proof}
        The estimate for free solutions follows from the fact that their temporal frequencies are of size $\lambda$.

        For the Duhamel integral we have
\[
\lambda^\ell \|\mc{J}_0[P_{\ll
  \lambda^2}^{(t)}G_\lambda]\|_{L^\infty_t L^2_x}\lesa \lambda^\ell\|P_{\ll
  \lambda^2}^{(t)}G_\lambda \|_{L^1_t L^2_x}\lesa \lambda^{\ell-\alpha} \|(\lambda+|\partial_t|)^\alpha P_{\ll
  \lambda^2}^{(t)}G_\lambda \|_{L^1_t L^2_x}.
\]
Similarly to \eqref{eqn:lem energy ineq:Linf bound} above we also
obtain, for $\lambda>2^{16}$,
\[
\lambda^\ell \|\mc{J}_0[P^{(t)}_{\gtrsim
  \lambda^2}G_\lambda]\|_{L^\infty_t L^2_x}\lesa \lambda^{\ell-2}\|P^{(t)}_{\gtrsim
  \lambda^2}G_\lambda\|_{L^\infty_t L^2_x},
\]
and deduce
\begin{equation}\label{eqn:inf-bound-wave}\lambda^\ell \|\mc{J}_0[G_\lambda]\|_{L^\infty_t L^2_x}\lesa \lambda^{\ell-\alpha} \|(\lambda+|\partial_t|)^\alpha P_{\ll
  \lambda^2}^{(t)}G_\lambda \|_{L^1_t L^2_x}+\lambda^{\ell-2} \|P^{(t)}_{\gtrsim
  \lambda^2}G_\lambda\|_{L^\infty_t L^2_x}.
\end{equation}
Since the bound for the $L^2_{t,x}$ component of the norm $\| \cdot\|_{W^{\ell, \alpha, \beta}}$ follows directly from the definition, it only remains to bound
    \begin{equation}\label{eqn:inf-wave-decomp}
        \lambda^{\ell-\alpha} \| (\lambda+|\p_t|)^\alpha P^{(t)}_{\ll \lambda^2}\mc{J}_0[ G_\lambda ] \|_{L^\infty_t L^2_x} \lesa \lambda^\ell \| P^{(t)}_{\lesa \lambda} \mc{J}_0[ G_\lambda ]\|_{L^\infty_t L^2_x} + \lambda^{\ell-\alpha} \| (\lambda+|\p_t|)^\alpha  P^{(t)}_{\ll \lambda^2} P^{(t)}_{\gg \lambda}\mc{J}_0[ G_\lambda ] \|_{L^\infty_t L^2_x}.
    \end{equation}
The first term on the righthand side of \eqref{eqn:inf-wave-decomp} can be bounded directly from \eqref{eqn:inf-bound-wave}. We turn to the second contribution in \eqref{eqn:inf-wave-decomp} and write
\[
P^{(t)}_{\ll \lambda^2} P^{(t)}_{\gg\lambda}\mc{J}_0[G_\lambda]=P^{(t)}_{\ll \lambda^2} P^{(t)}_{\gg\lambda}\mc{J}_0[P^{(t)}_{\ll \lambda^2} G_\lambda],
\]
where the  identity is due to the fact that
$d_\lambda=P^{(t)}_{\ll \lambda^2} P^{(t)}_{\gg\lambda}\mc{J}_0[P^{(t)}_{\gtrsim \lambda^2} G_\lambda]$ solves $(i\partial_t +|\nabla|)d_\lambda=0$,
therefore
$
d_\lambda=e^{it |\nabla|}d_\lambda(0)
$
and since $d_\lambda$ has temporal frequencies $\gg \lambda $ it must
vanish identically.

Let $e_\lambda:=\mc{J}_0[P_{\ll \lambda^2}^{(t)}G_\lambda]$ and $f_\lambda:=\mc{J}_0[(\lambda+|\partial_t|)^\alpha P_{\ll \lambda^2}^{(t)}G_\lambda]$. Then, $(i\partial_t +|\nabla|)((\lambda+|\partial_t|)^\alpha e_\lambda-f_\lambda)=0$, therefore
\[
(\lambda+|\partial_t|)^\alpha e_\lambda-f_\lambda=e^{it|\nabla|}z_\lambda, \quad z_\lambda=\big((\lambda+|\partial_t|)^\alpha e_\lambda-f_\lambda\big)\big|_{t=0}.
\]
Again, since the temporal frequencies of $e^{it|\nabla|}z$ are $\approx \lambda$, we conclude that
\[
(\lambda+|\partial_t|)^\alpha P^{(t)}_{\gg\lambda}\mc{J}_0[P_{\ll \lambda^2}^{(t)}G_\lambda]= P_{\gg \lambda}^{(t)}\mc{J}_0[(\lambda+|\partial_t|)^\alpha P_{\ll \lambda^2}^{(t)}G_\lambda],
\]
hence
\[
\|(\lambda+|\partial_t|)^\alpha P_{\gg \lambda}^{(t)}P_{\ll \lambda^2}^{(t)}\mc{J}_0[G_\lambda]\|_{L^\infty_t L^2_x}\lesa \| \mc{J}_0[(\lambda+|\partial_t|)^\alpha P_{\ll \lambda^2}^{(t)}G_\lambda]\|_{L^\infty_t L^2_x}\lesa \|(\lambda+|\partial_t|)^\alpha P_{\ll \lambda^2}^{(t)}G_\lambda\|_{L^1_t L^2_x}.
\]
Concerning the continuity, we observe that if $\|G_\lambda\|_{R^{0,a,b}_\lambda}<\infty$ for $a,b \g 0$, then
$G_\lambda\in L^1_{t, loc}L^2_x$ and the continuity follows from the dominated convergence theorem as in Lemma \ref{lem:energy ineq}.
\end{proof}

\subsection{A product estimate for fractional derivatives}\label{subsec:prod-est}

The definition of the norms $\| \cdot \|_{S^{s, a, b}_\lambda}$ involves three distinct regions of temporal frequencies, the low modulation case $|\tau +|\xi|^2| \ll \lambda^2$, the medium modulation case $|\tau| \ll \lambda^2$, and the high modulation case $|\tau| \gg \lambda^2$. When estimating bilinear quantities, this leads to a large number of possible frequency interactions. To help alleviate the number of possible cases we have to consider, we prove the following bilinear estimate which we later exploit as a black box.

\begin{lemma}\label{lem:elementary product est}
Let $a \in \RR$, $\mu>0$, and $1\les \tilde{p}, \tilde{q}, \tilde{r}, p, q, r \les \infty$ with $\frac{1}{p} = \frac{1}{q} + \frac{1}{r}$ and $\frac{1}{\tilde{p}} = \frac{1}{\tilde{q}} + \frac{1}{\tilde{r}}$. Then
   \begin{align*} \| ( \mu +  |\p_t|)^a ( v u ) \|_{L^{\tilde{p}}_t L^p_x}
        \lesa \mu^{-|a|} \| ( \mu + |\p_t|)^{|a|} v \|_{L^{\tilde{r}}_t L^r_x}\| ( \mu + |\p_t|)^a u \|_{L^{\tilde{q}}_t L^q_x}.
    \end{align*}
\end{lemma}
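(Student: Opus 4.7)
My plan is to build the argument around the pointwise Fourier-side inequality
\[
(\mu + |\tau_1 + \tau_2|)^a \les \mu^{-|a|} (\mu + |\tau_1|)^{|a|} (\mu + |\tau_2|)^a, \qquad a \in \RR,
\]
which follows from the two-line computation $(\mu + |\tau_1|)(\mu + |\tau_2|) = \mu^2 + \mu(|\tau_1|+|\tau_2|) + |\tau_1\tau_2| \g \mu(\mu + |\tau_1| + |\tau_2|) \g \mu(\mu + |\tau_1+\tau_2|)$: for $a\g 0$ one raises directly to the power $a$, and for $a<0$ one first swaps the roles of $\tau_2$ and $\tau_1+\tau_2$ in that chain (giving $(\mu+|\tau_1|)(\mu+|\tau_1+\tau_2|)\g \mu(\mu+|\tau_2|)$) before raising to the power $a<0$, producing the correct asymmetric placement of the exponent $|a|$ on the first factor.

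Setting $V=(\mu+|\p_t|)^{|a|}v$ and $U=(\mu+|\p_t|)^a u$, the desired estimate reduces to a bilinear bound of the form
\[ \| T_m(V,U) \|_{L^{\tilde p}_t L^p_x} \lesa \mu^{-|a|} \|V\|_{L^{\tilde r}_t L^r_x} \|U\|_{L^{\tilde q}_t L^q_x}, \]
where $T_m$ is the bilinear Fourier multiplier in the temporal variable with symbol $m(\tau_1,\tau_2) = (\mu+|\tau_1+\tau_2|)^a[(\mu+|\tau_1|)^{|a|}(\mu+|\tau_2|)^a]^{-1}$, which by the above is bounded by $\mu^{-|a|}$. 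To pass from this symbol bound to the operator estimate I would decompose temporally, $V=\sum_{\la_1}V_{\la_1}$ and $U=\sum_{\la_2}U_{\la_2}$, and control the $(\la_1,\la_2)$-block via H\"older combined with two ingredients: the Mikhlin-type observation that $(\mu+|\p_t|)^a$, restricted to temporal frequencies $\les \la_1+\la_2$, has $L^p_t\to L^p_t$ operator norm $\les (\mu+\la_1+\la_2)^a$, and the uniform bound $\|(\mu+|\p_t|)^{-b}\|_{L^p_t\to L^p_t}\les \mu^{-b}$ for $b\g 0$. The latter I would derive from the subordination identity $(\mu+|\p_t|)^{-b}=\Gamma(b)^{-1}\int_0^\infty s^{b-1}e^{-s\mu} e^{-s|\p_t|}\,ds$, which realises the operator as a convex average of Poisson semigroup contractions, hence as convolution with a positive $L^1_t$ kernel of total mass $\mu^{-b}$. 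The double sum over $(\la_1,\la_2)$ is then closed by a standard paraproduct scheme (high-low, low-high, and high-high regimes) using the Fefferman--Stein vector-valued maximal inequality and the Littlewood--Paley square function characterisation of $L^p$ for $1<p<\infty$.

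The main obstacle is the endpoint range $p,q,r,\tilde p,\tilde q,\tilde r\in\{1,\infty\}$ where the square function is not equivalent to the $L^p$ norm and the paraproduct scheme breaks down. For these exponents I would revert to a direct analysis of the physical-side kernel: using the subordination representation of $(\mu+|\p_t|)^{\pm|a|}$ together with the smoothness of $m$ away from the coordinate axes, one can verify that the bilinear convolution kernel associated with $T_m$ is integrable in both temporal variables with total $L^1$ mass of order $\mu^{-|a|}$, after which Minkowski's inequality and H\"older close the estimate in the missing cases.
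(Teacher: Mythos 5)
Your reduction to a bilinear multiplier is based on a correct pointwise symbol bound (both for $a\g 0$ and, with the asymmetric placement of $|a|$, for $a<0$), and your two operator ingredients (the subordination bound for $(\mu+|\p_t|)^{-b}$ and the Mikhlin-type bound for $(\mu+|\p_t|)^a$ on frequency-localised functions) are fine. The decisive problem is your treatment of the endpoint exponents, which is not a side case here but the whole point of the lemma: the paper needs precisely $\tilde r=\infty$ (the $v$-factor is only ever controlled in $L^\infty_t$-based norms), $\tilde q=\infty$, and $\tilde p=1$ in its applications, and its proof is organised entirely around the failure of Littlewood--Paley theory at these exponents (it follows the Bourgain--Li endpoint Kato--Ponce commutator scheme, with an optimisation over a splitting frequency $M$, and obtains $a<0$ by duality). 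Your fallback claim for these cases is false: after rescaling to $\mu=1$, the symbol $m(\tau_1,\tau_2)=(1+|\tau_1+\tau_2|)^a(1+|\tau_1|)^{-|a|}(1+|\tau_2|)^{-a}$ satisfies $m(0,\tau_2)\equiv 1$ for all $\tau_2$ (and more generally $m(\tau_1,\tau_2)\to(1+|\tau_1|)^{-|a|}\neq 0$ as $|\tau_2|\to\infty$ with $\tau_1$ fixed, when $a>0$), so $m$ does not vanish at infinity and by Riemann--Lebesgue it cannot be the Fourier transform of an $L^1(\RR^2)$ kernel. This lack of decay is structural, not a technicality one can recover from ``smoothness away from the axes'': when $v$ has low temporal frequency the operator is essentially plain multiplication, and a bilinear operator of that type never has an absolutely integrable kernel. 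Hence no Minkowski/H\"older argument with an $L^1$ kernel of mass $\mu^{-|a|}$ can close the endpoint cases. (Also, subordination only represents the negative powers $(\mu+|\p_t|)^{-b}$, $b>0$; there is no positive-kernel representation of $(\mu+|\p_t|)^{+|a|}$.)

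A second, related weakness: even in the non-endpoint range your block estimate (H\"older plus ingredients (i)--(ii)) only produces an $\ell^1$ sum over the dyadic frequency of the high-frequency factor, and converting that into the stated single norm is exactly where you invoke square functions and Fefferman--Stein; that machinery is what becomes unavailable at $\tilde r=\infty$ or $\tilde q=\infty$, so the scheme does not degenerate gracefully into your endpoint argument. The paper avoids this by never using square functions: for $a>0$ it proves the stronger two-term Kato--Ponce bound $\|(1+|\p_t|)^a(vu)\|\lesssim \|v\|\,\|(1+|\p_t|)^a u\|+\|(1+|\p_t|)^a v\|\,\|u\|$ through a commutator decomposition whose pieces are summed after optimising over a cutoff $M$ (using an interpolation-type bound with a parameter $\theta<1/a$), and then deduces the $a<0$ case by duality. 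To repair your proposal you would need to replace the final paragraph by an argument of this commutator/optimisation type (or another genuine endpoint Kato--Ponce technique); as written, the endpoint cases --- the ones the lemma is actually used for --- are not proved.
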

\begin{proof}
The proof is essentially well-known, and thus we shall be somewhat brief. The main obstruction is that we allow the endpoint case $\tilde{r}=\infty$, and, as we are working with fractional derivatives in time, this causes the usual difficulties due to the failure of the Littlewood-Paley theory. In particular, to avoid summation issues, we closely follow the proof of the endpoint Kato-Ponce type inequality contained in \cite{Bourgain2014}.

To simplify notation, and in contrast to the rest of the paper, we temporarily adopt the convention that the temporal frequency multipliers $P^{(t)}_\nu$ give an inhomogeneous decomposition over $\nu \in 2^\NN$, thus
    $$ P^{(t)}_1  = \sum_{\lambda \in 2^\ZZ, \lambda \les 1}  \varphi\Big( \frac{|\p_t|}{\lambda}\Big), \qquad f = \sum_{\nu \in 2^\NN} P^{(t)}_\nu f $$
where $\varphi$ is as in Subsection \ref{subsec:fm}.

We first consider the case $a>0$ and prove the stronger estimate
       \begin{equation}\label{eqn:lem elem prod:a>0 strong}
            \|(1+|\p_t|)^a(vu) \|_{L^{\tilde{p}}_t L^p_x} \lesa \| v \|_{L^{\tilde{r}}_t L^r_x} \| ( 1 + |\p_t|)^a u \|_{L^{\tilde{q}}_t L^q_x} + \| ( 1 + |\p_t|)^a v \|_{L^{\tilde{r}}_t L^r_x} \| u \|_{L^{\tilde{q}}_t L^q_x}.
       \end{equation}
Clearly, after rescaling, this implies the required estimate in the case $a>0$. The proof of the estimate \eqref{eqn:lem elem prod:a>0 strong} is a straightforward adaption of the argument given in \cite{Bourgain2014}. In more detail, we decompose
    $$ vu = \sum_{\nu\in 2^\NN} P^{(t)}_\nu v P^{(t)}_{\les \nu} u + \sum_{\nu\in 2^\NN} P^{(t)}_{<\nu} v P^{(t)}_{\nu} u. $$
By symmetry, it is enough to consider the first term. To deal with the problem of summation over frequencies, we introduce a commutator term and write
     \begin{align} \sum_{\nu \in 2^\NN} (1+|\p_t|)^a\big(P^{(t)}_\nu v P^{(t)}_{\les \nu} u\big)
        &= \sum_{\nu \in 2^\NN} \Big[ (1+|\p_t|)^a\big(P^{(t)}_\nu v P^{(t)}_{\les \nu} u\big) - \big((1+|\p_t|)^a P^{(t)}_\nu v\big) P^{(t)}_{\les \nu} u\Big] \notag \\
        &\qquad \qquad \qquad + \sum_{\nu \in 2^\NN} \big( (1+|\p_t|)^a P^{(t)}_\nu v\big) P^{(t)}_{\les \nu} u\notag \\
        &= \sum_{\nu \in 2^\NN} \Big[ (1+|\p_t|)^a\big(P^{(t)}_\nu v P^{(t)}_{\les \nu} u\big) - \big((1+|\p_t|)^a P^{(t)}_\nu v\big) P^{(t)}_{\les \nu} u\Big] \notag \\
        &\qquad \qquad \qquad + \big((1+|\p_t|)^a v\big) u  + \sum_{\nu \in 2^\NN} \big( (1+|\p_t|)^a P^{(t)}_{\nu} v\big) P^{(t)}_{> \nu} u. \label{eqn:lem elem prod:a>0 decomp}
     \end{align}
The bound for the second term in \eqref{eqn:lem elem prod:a>0 decomp} follows directly from H\"older's inequality. To bound the third term in \eqref{eqn:lem elem prod:a>0 decomp}, we note that for any $M \in 2^\NN$ we have
    \begin{align*}
      \sum_{\nu\in 2^\NN} \big\| ( 1 + |\p_t|)^a P^{(t)}_\nu v P^{(t)}_{> \nu} u \big\|_{L^{\tilde{p}}_t L^p_x} &\lesa \sum_{\nu\les M} \nu^a \| v \|_{L^{\tilde{r}}_t L^r_x} \| u \|_{L^{\tilde{q}}_t L^q_x} + \sum_{\nu>M} \nu^{-a} \| ( 1 +|\p_t|)^a v \|_{L^{\tilde{r}}_t L^r_x} \| ( 1 + |\p_t|)^a  u \|_{L^{\tilde{q}}_t L^q_r} \\
      &\lesa M^a \| v \|_{L^{\tilde{r}}_t L^r_x} \| u \|_{L^{\tilde{q}}_t L^q_x} + M^{-a} \| ( 1 +|\p_t|)^a v \|_{L^{\tilde{r}}_t L^r_x} \| ( 1 + |\p_t|)^a  u \|_{L^{\tilde{q}}_t L^q_r}.
    \end{align*}
Optimising in $M$ then gives
    $$ \Big\| \sum_{\nu\in 2^\NN} ( 1 + |\p_t|)^a P^{(t)}_\nu v P^{(t)}_{> \nu} u \Big\|_{L^{\tilde{p}}_t L^p_x}
                    \lesa \Big( \| v \|_{L^{\tilde{r}}_t L^r_x} \| u \|_{L^{\tilde{q}}_t L^q_x} \| ( 1 +|\p_t|)^a v \|_{L^{\tilde{r}}_t L^r_x} \| ( 1 + |\p_t|)^a  u \|_{L^{\tilde{q}}_t L^q_r}\Big)^{\frac{1}{2}} $$
and hence \eqref{eqn:lem elem prod:a>0 strong} follows for the third term in \eqref{eqn:lem elem prod:a>0 decomp}. Finally, to bound the first term in \eqref{eqn:lem elem prod:a>0 decomp}, we first claim that for any $0<\theta < \frac{1}{a}$ we have the commutator estimates
     \begin{equation}\label{eqn:lem elem prod:commutator}
        \begin{split}
        \big\| (1+|\p_t|)^a\big(P^{(t)}_\nu v P^{(t)}_{\les \nu} u\big) - & \big((1+|\p_t|)^a P^{(t)}_\nu v\big) P^{(t)}_{\les \nu} u\big\|_{L^{\tilde{p}}_t L^p_x} \\
        &\lesa \nu^{- \theta a} \| (1+|\p_t|)^a v \|_{L^{\tilde{r}}_t L^r_x} \| (1+|\p_t|)^a u \|_{L^{\tilde{q}}_t L^q_x}^\theta \| u \|_{L^{\tilde{q}}_t L^q_x}^{1-\theta}
        \end{split}
     \end{equation}
and
     \begin{equation}\label{eqn:lem elem prod:commutator II}
        \big\| (1+|\p_t|)^a\big(P^{(t)}_\nu v P^{(t)}_{\les \nu} u\big) - \big((1+|\p_t|)^a P^{(t)}_\nu v\big) P^{(t)}_{\les \nu} u\big\|_{L^{\tilde{p}}_t L^p_x} \lesa \nu^{a} \| v \|_{L^{\tilde{r}}_t L^r_x} \|  u \|_{L^{\tilde{q}}_t L^q_x}.
     \end{equation}
Assuming these bounds for the moment, we then have for any $M \in 2^\NN$
    \begin{align*}
      \sum_{\nu\in 2^\NN} \big\| (1&+|\p_t|)^a\big(P^{(t)}_\nu v P^{(t)}_{\les \nu} u\big) - \big((1+|\p_t|)^a P^{(t)}_\nu v\big) P^{(t)}_{\les \nu} u \big\|_{L^{\tilde{p}}_t L^p_x}\\
       &\lesa \sum_{\nu\les M} \nu^a \| v \|_{L^{\tilde{r}}_t L^r_x} \|  u \|_{L^{\tilde{q}}_t L^q_x} + \sum_{\nu>M} \nu^{-\theta a} \| (1+|\p_t|)^a v \|_{L^{\tilde{r}}_t L^r_x} \| (1+|\p_t|)^a u \|_{L^{\tilde{q}}_t L^q_x}^\theta \| u \|_{L^{\tilde{q}}_t L^q_x}^{1-\theta} \\
      &\lesa M^a \| v \|_{L^{\tilde{r}}_t L^r_x} \|  u \|_{L^{\tilde{q}}_t L^q_x}  + M^{-\theta a} \| (1+|\p_t|)^a v \|_{L^{\tilde{r}}_t L^r_x} \| (1+|\p_t|)^a u \|_{L^{\tilde{q}}_t L^q_x}^\theta \| u \|_{L^{\tilde{q}}_t L^q_x}^{1-\theta}.
    \end{align*}
Optimising in $M$, we conclude that
    $$ \Big\| \sum_{\nu\in 2^\NN} P^{(t)}_\nu v P^{(t)}_{\les \nu } u \Big\|_{L^{\tilde{p}}_t L^p_x}
                \lesa \Big(\| (1+|\p_t|)^a v \|_{L^{\tilde{r}}_t L^r_x} \| u \|_{L^{\tilde{q}}_t L^q_x} \Big)^{\frac{1}{1+\theta}} \Big( \| v \|_{L^{\tilde{r}}_t L^r_x} \| (1+|\p_t|)^a u \|_{L^{\tilde{q}}_t L^q_x}\Big)^{1-\frac{1}{1+\theta}}$$
and hence \eqref{eqn:lem elem prod:a>0 strong} follows. It only remains to prove the standard commutator bounds \eqref{eqn:lem elem prod:commutator} and \eqref{eqn:lem elem prod:commutator II}. We begin by noting that for any $a\in \RR$, we have the related estimate
    \begin{equation}\label{eqn:lem elem prod:commutator III}
         \big\| (1+|\p_t|)^a\big(P^{(t)}_\nu v P^{(t)}_{\ll \nu} u\big) - \big((1+|\p_t|)^a P^{(t)}_\nu v\big) P^{(t)}_{\ll \nu} u\big\|_{L^{\tilde{p}}_t L^p_x}
                \lesa \nu^{a-1} \| P^{(t)}_\nu v \|_{L^{\tilde{r}}_t L^r_x} \| \p_t P^{(t)}_{\ll \nu} u \|_{L^{\tilde{q}}_t L^q_x}
    \end{equation}
which follows by writing
    \begin{align*}
        (1+|\p_t|)^a\big(P^{(t)}_\nu v P^{(t)}_{\ll \nu} u\big) - & \big((1+|\p_t|)^a P^{(t)}_\nu v\big) P^{(t)}_{\ll \nu} u  \\
                &= \nu^a \int_\RR \nu \, \psi_1(s\nu) \big( P^{(t)}_\nu v\big)(t-s) \Big( \big( P^{(t)}_{\ll \nu} u\big)(t-s) - \big(P^{(t)}_{\ll \nu} u\big)(t)\Big) ds \\
                &= - \nu^{a-1} \int_\RR \int_0^1 \nu \, \psi_2( s \nu) \big( P^{(t)}_\nu v\big)(t-s) \big( \p_t P^{(t)}_{\ll \nu} u\big)(t-ss') ds' ds
    \end{align*}
for some $\psi_1 \in \mc{S}(\RR)$ (i.e. some smooth rapidly decreasing kernel independent of $\nu$, $u$, and $v$), $\psi_2(s)=s\psi_1(s)$, and so applying H\"older's inequality and using translation invariance, we obtain \eqref{eqn:lem elem prod:commutator III}. To conclude the proof of \eqref{eqn:lem elem prod:commutator}, we note that if $a >0$, then \eqref{eqn:lem elem prod:commutator III} also holds with $P^{(t)}_{\ll \nu} u$ replaced with $P^{(t)}_{\les \nu} u$ (this is simply another application of H\"older and Bernstein), and hence \eqref{eqn:lem elem prod:commutator} follows from the interpolation type bound
        \begin{align*} \| \p_t P^{(t)}_{\les \nu} u \|_{L^{\tilde{q}}_t L^q_x} \les \sum_{\nu'\in 2^\NN, \nu'\les \nu} \nu' \| P^{(t)}_{\nu'} u \|_{L^{\tilde{q}}_t L^q_x}
                &\lesa \sum_{\nu' \in 2^\NN, \nu'\les \nu} (\nu')^{1-\theta a} \|(1 +|\p_t|)^a u \|_{L^{\tilde{q}}_t L^q_x}^\theta \| u \|_{L^{\tilde{q}}_t L^q_x}^{1-\theta} \\
                &\lesa \nu^{1-\theta a} \| (1 +|\p_t|)^a u \|_{L^{\tilde{q}}_t L^q_x}^\theta \| u \|_{L^{\tilde{q}}_t L^q_x}^{1-\theta}
        \end{align*}
which holds for any $0\les \theta < 1/a$. Finally, the second commutator bound \eqref{eqn:lem elem prod:commutator II} follows by simply discarding the commutator structure and applying H\"older and Bernstein's inequalities. This completes the proof of \eqref{eqn:lem elem prod:a>0 strong} and hence the required estimate holds in the case $a>0$.

It only remains to consider the case $a<0$, but this follows by arguing via duality. Namely, the estimate \eqref{eqn:lem elem prod:a>0 strong} gives
    \begin{align*}
       &\| (1+|\p_t|)^a ( v u) \|_{L^{\tilde{p}}_t L^p_x} \\
       ={}& \sup_{\| w \|_{L^{\tilde{p}'}_t L^{p'}_x}\les 1} \Big| \int_{\RR^{1+d}} \big( (1+|\p_t|)^a w \big) v u dx dt \Big| \\
       \les{}& \| (1+|\p_t|)^a u \|_{L^{{\tilde{q}}}_t L^q_x} \sup_{\| w \|_{L^{\tilde{p}'}_t L^{p'}_x}\les 1} \big\| (1+|\p_t|)^{|a|} \big( v (1+|\p_t|)^a w\big) \big\|_{L^{\tilde{q}'}_t L^{q'}_x} \\
       \lesa{}& \| (1+|\p_t|)^a u \|_{L^{{\tilde{q}}}_t L^q_x} \sup_{\| w \|_{L^{\tilde{p}'}_t L^{p'}_x}\les 1} \Big( \|  v\|_{L^{\tilde{r}}_t L^r_x} \| w \|_{L^{\tilde{p}'}_t L^{p'}_x} + \| (1+|\p_t|)^{|a|} v\|_{L^{\tilde{r}}_t L^r_x} \| ( 1+ |\p_t|)^a w \|_{L^{\tilde{p}'}_t L^{p'}_x}  \Big)\\
       \lesa{}& \| (1+|\p_t|)^a u \|_{L^{{\tilde{q}}}_t L^q_x} \| (1+|\p_t|)^{|a|} v\|_{L^{\tilde{r}}_t L^r_x}
    \end{align*}
as required.
\end{proof}

\subsection{Decomposability of norms}\label{subsec:decom}
Given open intervals $I_1, I_2 \subset \RR$ we would like to bound the norm $\| u \|_{S^{s,a,b}(I_1\cup I_2)}$ in terms of the norms $\| u \|_{S^{s,a,b}(I_1)}$ and $\|u\|_{S^{s,a,b}(I_2)}$ on the small intervals $I_1$ and $I_2$.
\begin{lemma}[Decomposability]\label{lem:decomposability} There exists a constant $C>0$ such that for any $s\in \RR$, $0\le a,b\le 1$, any open intervals $I_1, I_2 \subset \RR$ with $I_1 \cap I_2 \not = \varnothing$, and any $u \in S^{s,a,b}(I_1) \cap S^{s,a,b}(I_2)$ we have
  $$ \| u \|_{S^{s,a,b}(I_1 \cup I_2)} \les C \big( 1 + |I_1 \cap I_2|^{-a_\sharp}\big) \big( \| u \|_{S^{s,a,b}(I_1)} + \| u \|_{S^{s,a,b}(I_2)}\big), $$
  for $a_\sharp:=\max\{a,\frac12\}$.
\end{lemma}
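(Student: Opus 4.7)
The plan is to construct a global extension $u'$ of $u|_{I_1\cup I_2}$ by gluing near-optimal extensions $u_1,u_2$ from each $I_j$ using a smooth partition of unity adapted to the overlap scale $\delta:=|I_1\cap I_2|$, and then to control each of the three components of $\|u'\|_{S^{s,a,b}_\lambda}$ separately. First I would pick $u_j\in S^{s,a,b}$ with $u_j|_{I_j}=u$ and $\|u_j\|_{S^{s,a,b}}\les 2\|u\|_{S^{s,a,b}(I_j)}$; after ruling out the trivial case of nested intervals, let $\chi\in C^\infty(\RR)$ be a cutoff centred at the midpoint of $I_1\cap I_2$, equal to $1$ beyond the left half of the overlap and $0$ beyond the right half, with $|\chi^{(k)}|\lesa \delta^{-k}$. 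Setting $u':=\chi u_1+(1-\chi)u_2$ gives $u'|_{I_1\cup I_2}=u$ (since $u_1=u_2=u$ on the overlap eliminates the dependence on $\chi$ there, while off the overlap $\chi$ selects the correct $u_j$), so it suffices to bound $\|u'\|_{S^{s,a,b}}$.

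The $L^\infty_tL^2_x$ piece is immediate from the triangle inequality. For the Strichartz piece $\lambda^{s-2a}\|(\lambda+|\p_t|)^au'_\lambda\|_{L^2_tL^{2^*}_x}$, I will apply Lemma \ref{lem:elementary product est} (with $\mu=\lambda$ and $\tilde r=r=\infty$) to the factorisations $\chi\cdot(u_1)_\lambda$ and $(1-\chi)\cdot(u_2)_\lambda$. The essential input is the scaling-type bound $\|(\lambda+|\p_t|)^a\chi\|_{L^\infty_t}\lesa \lambda^a+\delta^{-a}$, obtained by splitting $\chi=P^{(t)}_{\les\lambda}\chi+P^{(t)}_{>\lambda}\chi$: on the low-frequency piece $(\lambda+|\tau|)^a\approx \lambda^a$ and one uses Bernstein, while on the high-frequency piece the operator reduces to $|\p_t|^a$, whose $L^\infty$ norm on a $\delta$-scale cutoff is $\lesa \delta^{-a}$ by the standard scaling bound for fractional derivatives of a bump. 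Multiplying by $\lambda^{-a}$ and invoking $\lambda\g 1$ produces the prefactor $1+\delta^{-a}\les 1+\delta^{-a_\sharp}$.

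The main obstacle is the third component $\lambda^{s-1+b}\|m_\lambda(|\p_t|)(i\p_t+\Delta)u'_\lambda\|_{L^2_{t,x}}$ with $m_\lambda(\tau)=\bigl((\lambda+|\tau|)/(\lambda^2+|\tau|)\bigr)^a$, since $m_\lambda$ is not of the $(\mu+|\p_t|)^a$ form covered by Lemma \ref{lem:elementary product est}. Expanding via Leibniz
\[
(i\p_t+\Delta)u'_\lambda=\chi G_1+(1-\chi)G_2+i\chi'((u_1)_\lambda-(u_2)_\lambda),\qquad G_j:=(i\p_t+\Delta)(u_j)_\lambda,
\]
the $\chi'$ term is handled by H\"older in time and the uniform bound $\|m_\lambda\|_{L^2\to L^2}\les 1$ (valid since $\lambda+|\tau|\les \lambda^2+|\tau|$ for $\lambda\g 1$): $\|m_\lambda[\chi'(u_j)_\lambda]\|_{L^2_{t,x}}\les\|\chi'\|_{L^2_t}\|(u_j)_\lambda\|_{L^\infty_tL^2_x}\lesa \delta^{-1/2}\|(u_j)_\lambda\|_{L^\infty_tL^2_x}$, and $\delta^{-1/2}\les 1+\delta^{-a_\sharp}$ since $a_\sharp\g \tfrac{1}{2}$. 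For $\chi G_j$ I would split $G_j=P^{(t)}_{>\lambda^2}G_j+P^{(t)}_{\les\lambda^2}G_j$: on the high-modulation part $m_\lambda\approx 1$, so $\|m_\lambda[\chi P^{(t)}_{>\lambda^2}G_j]\|_{L^2}\les\|P^{(t)}_{>\lambda^2}G_j\|_{L^2}\lesa \|m_\lambda G_j\|_{L^2}$; on the low-modulation part $H:=P^{(t)}_{\les\lambda^2}G_j$, the pointwise symbol dominance $m_\lambda(\tau)\les \lambda^{-2a}(\lambda+|\tau|)^a$ (from $\lambda^2+|\tau|\g \lambda^2$) reduces matters to $\|(\lambda+|\p_t|)^a[\chi H]\|_{L^2}$; Lemma \ref{lem:elementary product est} then yields the prefactor $1+(\lambda\delta)^{-a}\lesa 1+\delta^{-a}$, and the reverse comparison $(\lambda+|\p_t|)^a\approx \lambda^{2a}m_\lambda$ on temporal frequencies $\les \lambda^2$ (valid because $\lambda^2+|\tau|\approx \lambda^2$ there) converts $\|(\lambda+|\p_t|)^aH\|_{L^2}$ back into $\lambda^{2a}\|m_\lambda G_j\|_{L^2}$. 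Collecting everything gives the single-frequency bound with prefactor $C(1+\delta^{-a_\sharp})$; square-summing over $\lambda\in 2^\NN$ and taking infimum over $u_1,u_2$ completes the proof. The crux is the one-sided pointwise comparison of $m_\lambda$ with $\lambda^{-2a}(\lambda+|\p_t|)^a$ and its sharpness at low temporal frequencies, which is what lets the product lemma be used even though $m_\lambda$ itself is not of product form.
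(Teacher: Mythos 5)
Your argument is correct and follows essentially the same route as the paper's proof: glue near-optimal extensions with a cutoff at the overlap scale, reduce to a localisability bound, use Lemma \ref{lem:elementary product est} with the bound $\lambda^{-a}\|(\lambda+|\p_t|)^a\chi\|_{L^\infty_t}\lesa 1+\delta^{-a}$ for the Strichartz piece, split into low/high temporal frequencies for the $(i\p_t+\Delta)$ component, and absorb the $\delta^{-1/2}$ from the derivative of the cutoff using $a_\sharp\g\tfrac12$ and $b\les 1$. The only cosmetic difference is that you expand the Leibniz rule on the glued function rather than estimating $\rho_j u^j$ separately as the paper does, which changes nothing of substance.
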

\begin{proof}
Let $\rho \in C^\infty(\RR)$ with $\rho(t)=1$ for $t\les -1$, $\rho(t) = 0$ for $t \g 1$, and for every $t \in \RR$
    $$ \rho(t) + \rho(-t) = 1. $$
After a shift, we may assume that $(-\epsilon, \epsilon) \subset I_1 \cap I_2$ for some $\epsilon>0$, and that $I_1$ lies to the left of $I_2$ (i.e. $\inf I_1 \les \inf I_2$). Define $\rho_1(t) = \rho( \epsilon^{-1} t)$ and $\rho_2(t) = \rho( - \epsilon^{-1} t)$ and let $u^j$ be an extension of $u|_{I_j}$ to $\RR$ such that
$\|u\|_{S^{s,a,b}(I_j)}  \sim \|u^j\|_{S^{s,a,b}}$.
By construction we have $u = \rho_1 u^1 + \rho_2 u^2$ on $I_1\cup I_2$, and hence by definition of the restriction norm
\begin{align*}
 \|u\|_{S^{s,a,b}(I_1\cup I_2)}
    \le \|\rho_1u^1\|_{S^{s,a,b}}+\|\rho_2u^2\|_{S^{s,a,b}}
    &\lesa (1+\epsilon^{-a_\sharp})(\|u^1\|_{S^{s,a,b}}+\|u^2\|_{S^{s,a,b}})\\
    &\lesa (1+\epsilon^{-a_\sharp})(\|u\|_{S^{s,a,b}(I_1)}+\|u\|_{S^{s,a,b}(I_2)}),
\end{align*}
provided that $S^{s,a,b}$ enjoys a localisability estimate of the form
\[
 \|\rho_ju\|_{S^{s,a,b}} \lesa (1+\epsilon^{-a_\sharp})\|u\|_{S^{s,a,b}}. \]
Taking $\epsilon>0$ as large as possible (namely $\epsilon \approx |I_1 \cap I_2|$) leads to the desired estimate.

It remains to prove the above localisability, which follows from the product estimate Lemma \ref{lem:elementary product est}. Indeed, for every frequency $\lambda \in 2^\NN$, we have
\[
 \|(\lambda + |\p_t|)^a( \rho_1 u_\lambda ) \|_{L^2_t L^{2^*}_x}
         \lesa  \|\lambda^{-a}(\lambda+|\p_t|)^a \rho_1\|_{L^\infty_t}  \|(\lambda + |\p_t|)^a u_\lambda  \|_{L^2_t L^{2^*}_x}, \]
where the norm of $\rho_1$ is bounded uniformly in $\lambda$ by $\| (1 + |\p_t| )^a [ \rho(\epsilon^{-1} t)] \|_{L^\infty_t} \lesa 1   + \epsilon^{-a}$.
The $L^\infty_t L^2_x$ component is trivially localisable. For the remaining $L^2_{t,x}$ component of $S^{s,a,b}$, we have
\[
 \Big\|\Big(\frac{\lambda + |\p_t|}{\lambda^2 + |\p_t|}\Big)^a (i\partial_t+\Delta)( \rho_1 u_\lambda ) \Big\|_{L^2_{t,x}}
 \le \Big\|\Big(\frac{\lambda + |\p_t|}{\lambda^2 + |\p_t|}\Big)^a [ \rho_1 (i\partial_t+\Delta)u_\lambda ] \Big\|_{L^2_{t,x}}
   + \Big\|\Big(\frac{\lambda + |\p_t|}{\lambda^2 + |\p_t|}\Big)^a [\dot \rho_1 u_\lambda]  \Big\|_{L^2_{t,x}}. \]
To bound the first term, we decompose $u$ into high and low temporal frequencies and observe that another application of Lemma \ref{lem:elementary product est} gives
    \begin{align*}
       \Big\|&\Big(\frac{\lambda + |\p_t|}{\lambda^2 + |\p_t|}\Big)^a [ \rho_1 (i\partial_t+\Delta)u_\lambda ] \Big\|_{L^2_{t,x}}    \\
                &\lesa \lambda^{-2a} \|(\lambda + |\p_t|)^a [ \rho_1 (i\partial_t+\Delta) P^{(t)}_{\ll \lambda^2} u_\lambda ] \Big\|_{L^2_{t,x}} + \| \rho_1 (i\partial_t+\Delta) P^{(t)}_{\gtrsim \lambda^2} u_\lambda \|_{L^2_{t,x}} \\
                &\lesa \| \lambda^{-a} (\lambda + |\p_t|)^a \rho_1\|_{L^\infty_t} \lambda^{-2a} \| (\lambda+|\p_t|)^a (i\p_t + \Delta) P^{(t)}_{\ll \lambda^2} u_\lambda\|_{L^2_{t,x}} + \| \rho_1 \|_{L^\infty_t} \| (i\p_t + \Delta) P^{(t)}_{\gtrsim \lambda^2} u_\lambda \|_{L^2_{t,x}} \\
                &\lesa ( 1+ \epsilon^{-a}) \lambda^{1-s-b} \| u_\lambda \|_{S^{s,a,b}_\lambda}.
    \end{align*}
On the other hand, for the second term, we have
    \begin{align*}
       \Big\|\Big(\frac{\lambda + |\p_t|}{\lambda^2 + |\p_t|}\Big)^a [\dot \rho_1 u_\lambda]  \Big\|_{L^2_{t,x}}&\lesa \| \dot \rho_1 u_\lambda \|_{L^2_{t,x}} \les \| \dot \rho_1 \|_{L^2_x} \| u_\lambda \|_{L^\infty_t L^2_x} \lesa \epsilon^{-\frac{1}{2}} \lambda^{-s} \| u_\lambda \|_{S^{s,a,b}_\lambda}
    \end{align*}
which implies the required bound, since $b\le 1$.
\end{proof}

\section{Bilinear Estimates for Schr\"odinger nonlinearity}\label{sec:bil-est-schr}

In this section we prove that we can bound the Schr\"odinger nonlinearity in the space $N^{s,a,b}$.

\begin{theorem}[Bilinear estimate for Schr\"odinger nonlinearity]\label{thm:main bilinear schro}
Let $d\g4$, $0 \les s \les \ell + 2$, $\beta\g 0$, and $0\les a, b \les 1$ such that
    $$\ell \g b + \tfrac{d-4}{2}, \qquad s-\ell \les a + 1 -b, \qquad s+\ell \g 2a, \qquad \beta  \g \max\{s-1, \tfrac{d-4}{2}+b\}.$$
and
    $$ (s, \ell) \not = \big(\tfrac{d-2}{2}+a, \tfrac{d-4}{2} +b\big),  \qquad (\beta, b) \not = \big(\tfrac{d-2}{2}, 1\big).$$
Then
    $$ \| \Re(V) u \|_{N^{s,a,b}} \lesa \| V \|_{W^{\ell,a,\beta}} \| u \|_{S^{s,a, 0}}. $$
\end{theorem}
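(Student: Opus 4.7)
The plan is to perform a dyadic decomposition $V = \sum_\mu V_\mu$ and $u = \sum_\nu u_\nu$, project the product onto spatial frequency $\lambda$, and estimate each of the three pieces of the norm $\|\cdot\|_{N^{s,a,b}_\lambda}$ separately before summing the dyadic outputs in $\ell^2$ over $\lambda$. The bilinear expansion splits into the three standard cases according to the dominant frequency: the high-high to low case $\mu \approx \nu \gtrsim \lambda$, the wave-dominant case $\mu \approx \lambda \gg \nu$, and the Schr\"odinger-dominant case $\nu \approx \lambda \gg \mu$. In each case a strictly positive power of the ratio $(\min/\max)$ of frequencies is needed, which is where the arithmetic conditions on $(s,\ell,a,b,\beta)$ will originate.

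The first two pieces of $\|\cdot\|_{N^{s,a,b}_\lambda}$ are handled by H\"older and Sobolev embedding. For the $L^\infty_t L^2_x$ component at low temporal frequency one pairs $V \in L^\infty_t L^p_x$ with $u \in L^\infty_t L^q_x$ for appropriate $(p,q)$ with $1/p+1/q = 1/2$, controlling $V$ via the $L^\infty_t L^2_x$ part of $\|V\|_{W^{\ell,a,\beta}}$ together with a Sobolev embedding that uses $\ell \g (d-4)/2 + b$, and $u$ via the $L^\infty_t L^2_x$ component of $\|u\|_{S^{s,a,0}}$ combined with Bernstein. For the low-modulation $L^2_t L^{2_*}_x$ piece one places $u$ in the endpoint Strichartz space $L^2_t L^{2^*}_x$ through the characterisation \eqref{eqn:norm chara} and pairs with $V \in L^\infty_t L^d_x$. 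The high-modulation contributions from either factor are absorbed using the $\beta$-component of $\|V\|_{W^{\ell,a,\beta}}$ together with a non-endpoint Schr\"odinger Strichartz estimate on $u$.

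The crux of the proof is the third, weighted $L^2_{t,x}$ component. The key observation is that a free wave at spatial frequency $\mu$ has temporal frequency $\approx \mu$, so in the wave-dominant and high-high cases the product $Vu$ has output temporal frequency roughly $\max(\mu, \nu^2)$, which can be much smaller than $\lambda^2$. In this regime the multiplier $(\tfrac{\lambda + |\p_t|}{\lambda^2 + |\p_t|})^a$ acts essentially as $\lambda^{-a}(\lambda + |\p_t|)^a$, and one applies the fractional product rule, Lemma \ref{lem:elementary product est}, with base parameter $\mu$ to distribute the $(\mu + |\p_t|)^a$ derivative between $V$ and $u$. The norm $\|V\|_{W^{\ell,a,\beta}}$ is designed precisely so that $(\mu + |\p_t|)^a P^{(t)}_{\ll \mu^2} V_\mu$ is controlled in $L^\infty_t L^2_x$, which matches the weight $\mu^a \approx \lambda^a$ supplied by the output. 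High-modulation portions of either factor are estimated via the $L^2_{t,x}$ components of the respective norms, gaining regularity through the $\beta$ parameter on the wave side.

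The principal obstacle is the bookkeeping of derivative losses across the many subcases: three frequency interactions, three norm pieces, and a further high/low modulation split on each factor. The arithmetic conditions emerge naturally from this accounting: $s - \ell \les a + 1 - b$ balances the $L^\infty_t L^2_x$ bound on $V_\mu$ against the endpoint Strichartz loss on $u$ in the wave-dominant case, $\ell \g b + (d-4)/2$ is the Sobolev threshold for placing $V$ in $L^d$ or $L^\infty$, and $s + \ell \g 2a$ controls the Schr\"odinger-dominant regime. The excluded endpoints $(s,\ell) = (\tfrac{d-2}{2}+a, \tfrac{d-4}{2}+b)$ and $(\beta,b) = (\tfrac{d-2}{2}, 1)$ are precisely the critical Sobolev lines where $H^{(d-2)/2}$ fails to embed in $L^\infty$; away from them one can afford a strictly sub-critical H\"older exponent, and the $\ell^2$ Littlewood-Paley square function absorbs the logarithmic loss.
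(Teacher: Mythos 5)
Your overall skeleton (frequency trichotomy, Lemma \ref{lem:elementary product est} to distribute the temporal weight, H\"older/Sobolev for the $L^\infty_t L^2_x$ piece, and the $\beta$-component of $W^{\ell,a,\beta}$ absorbing the wave's high temporal frequencies) matches the paper, and your accounting for $s-\ell\les a+1-b$, $\ell\g\tfrac{d-4}{2}+b$, $s+\ell\g 2a$ is broadly where those conditions do arise. The genuine gap is your treatment of the low-modulation component $\lambda_0^{s}\|C_{\les(\lambda_0/2^8)^2}P_{\lambda_0}(Vu)\|_{L^2_tL^{2_*}_x}$ in the wave-dominant case $\mu\approx\lambda_0\gg\nu$. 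Pairing $u_\nu$ in endpoint Strichartz with $V_{\approx\lambda_0}\in L^\infty_tL^d_x$ (or, better, $V\in L^\infty_tL^2_x$ with Bernstein on $u_\nu$) produces at best a factor of the form $\lambda_0^{\,s-\ell}\,\nu^{\frac{d-4}{2}+a-s}$, which is square-summable in $\lambda_0$ only for $s\lesssim\ell$; moreover, when $a>0$ the norm $S^{s,a,0}$ controls the endpoint Strichartz norm only with a loss, and without loss only for the near-paraboloid part $P^N_\nu u$, cf.\ \eqref{eqn:stri with loss} and \eqref{eqn:norm chara} --- and in this interaction the relevant part of $u$ is precisely \emph{not} near the paraboloid. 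So this step fails throughout the range $\ell<s\les\ell+2$, which is exactly the regime the theorem is designed to reach (and which, as discussed in the introduction, cannot be captured by any space containing the endpoint Strichartz space).

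The missing idea is the non-resonance argument. After splitting $V$ in temporal frequency so that the relevant wave piece satisfies $\supp\widetilde v\subset\{|\tau|\ll\lr{\xi}^2\}$, the output constraint $C_{\les(\lambda_0/2^8)^2}$ in the high-low case forces the identity
\begin{equation*}
C_{\les(\frac{\lambda_0}{2^8})^2}P_{\lambda_0}\big(v\,u_{\lambda_1}\big)
= C_{\les(\frac{\lambda_0}{2^8})^2}P_{\lambda_0}\big(v_{\approx\lambda_0}\,P^{(t)}_{\approx\lambda_0^2}C_{>(\frac{\lambda_1}{2^8})^2}u_{\lambda_1}\big),
\end{equation*}
i.e.\ the Schr\"odinger factor must sit at modulation $\approx\lambda_0^2\gg\lambda_1^2$, and is then estimated through the weighted $(i\p_t+\Delta)u$ component of $S^{s,a,0}_{\lambda_1}$, gaining the factor $\lambda_0^{-2}$ that produces the condition $s\les\ell+2$ (an analogous identity with $C_{\approx\lambda_1^2}u_{\lambda_1}$ is needed in the low-high case, which is where $s+\ell\g 2a$ enters for this component). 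You invoke the wave's temporal-frequency localisation only to simplify the multiplier in the weighted $L^2_{t,x}$ term; without exploiting it, together with the Schr\"odinger modulation, in the $L^2_tL^{2_*}_x$ term, the claimed estimate is out of reach. A minor further point: the exclusion $(s,\ell)\neq(\tfrac{d-2}{2}+a,\tfrac{d-4}{2}+b)$ is a borderline failure of the dyadic summation when both $s-\ell=a+1-b$ and $\ell=\tfrac{d-4}{2}+b$ hold simultaneously, not literally the $H^{\frac{d-2}{2}}\not\hookrightarrow L^\infty$ endpoint (the latter accounts for $(\beta,b)\neq(\tfrac{d-2}{2},1)$).
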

\begin{proof}
In view of the definition of $N^{s,a,b}$ and $W^{\ell,a,\beta}$, a short computation shows that it suffices to prove the bounds
    \begin{align}\label{eqn:thm main bi schro:left L2}
        \bigg( \sum_{\lambda_0 \in 2^\NN}   \lambda^{2(s-1 -2a + b)}_0 \Big\| (\lambda_0 + |\p_t|)^a P_{\lambda_0}( v u) \Big\|_{L^2_{t,x}}^2 \bigg)^\frac{1}{2} &\lesa  \Big( \sum_\mu  \|  (\mu + |\p_t|)^a v_\mu \|_{L^\infty_t H^{\ell-a}_x}^2\Big)^\frac{1}{2} \| u \|_{S^{s,a,0}}
    \\
       \bigg( \sum_{\lambda_0 \in 2^\NN}  \lambda_0^{2s}  \|P_{\lambda_0}(vu) \|_{L^2_t L^{2_*}_x}^2 + \lambda_0^{2(s-1 + b)} \Big\| P_{\lambda_0}(vu) \Big\|_{L^2_{t,x}}^2 \bigg)^\frac{1}{2}  &\lesa  \| v \|_{L^2_t H^{\beta+1}_x} \| u \|_{S^{s,a,0}} \label{eqn:thm main bi schro:right L2} \\
        \Big(\sum_{\lambda_0 \in 2^\NN} \lambda_0^{2(s-2)} \| P_{\lambda_0}(vu) \|_{L^\infty_t L^2_x}^2 \Big)^\frac{1}{2} &\lesa \Big( \sum_\mu \| v_\mu \|_{L^\infty_t H^\ell}^2\Big)^\frac{1}{2} \| u \|_{L^\infty_t H^s_x}
        \label{eqn:thm main bi schro:Linfty}
    \end{align}
and, under the additional assumption that $\supp \widetilde{v} \subset \{ |\tau| \ll \lr{\xi}^2\}$, that we have
    \begin{equation}\label{eqn:thm main bi schro:stri}
        \Big( \sum_{\lambda_0} \lambda_0^{2s} \Big\|  P^N_{\lambda_0}( v u) \Big\|_{L^2_t L^{2_*}_x}^2 \Big)^\frac{1}{2}
                    \lesa \Big( \sum_{\mu} \| (\mu + |\p_t|)^a v_\mu \|_{L^\infty_t H^{\ell -a }_x}^2\Big)^\frac{1}{2} \| u \|_{S^{s,a,0}}.
    \end{equation}
More precisely, assuming that the bounds \eqref{eqn:thm main bi schro:left L2} -- \eqref{eqn:thm main bi schro:stri} hold, we decompose
        $$V= \sum_{\mu \in 2^\NN} V_\mu = \sum_{\mu \in 2^\NN} P^{(t)}_{\ll \mu^2} V_\mu + \sum_{\mu \in 2^\NN} P^{(t)}_{\gg \mu} P^{(t)}_{\gtrsim \mu^2} V_\mu + \sum_{\mu \in 2^\NN} P^{(t)}_{\lesa \mu} P^{(t)}_{\gtrsim \mu^2} V_\mu = V_1 +V_2 + V_3. $$
An application of \eqref{eqn:thm main bi schro:left L2}, \eqref{eqn:thm main bi schro:Linfty}, and \eqref{eqn:thm main bi schro:stri} (together with the invariance of the righthand side with respect to complex conjugation) gives
        \begin{align*}
          \| \Re(V_1) u \|_{N^{s,a,b}} &\lesa \Big( \sum_\mu \| (\mu + |\p_t|)^a P^{(t)}_{\ll \mu^2} V_\mu \|_{L^\infty_t H^{\ell -a}_x}^2\Big)^\frac{1}{2} \| u \|_{S^{s,a,0}} \lesa \| V \|_{W^{\ell, a, \beta}} \| u \|_{S^{s,a,0}}.
        \end{align*}
On the other hand, for the $V_2$ contribution, we note that since
        $$ \| V_2 \|_{L^2_t H^{\beta + 1}_x} \approx \Big( \sum_{\mu} \mu^{2(\beta+1)} \| P^{(t)}_{\gg \mu} V_\mu \|_{L^2_{t,x}}^2 \Big)^\frac{1}{2} \lesa \Big( \sum_{\mu} \mu^{2(\beta-1)} \| (i\p_t + |\nabla|) P^{(t)}_{\gg \mu} V_\mu \|_{L^2_{t,x}}^2 \Big)^\frac{1}{2} \lesa \| V \|_{W^{\ell, a, \beta}} $$
an application of \eqref{eqn:thm main bi schro:right L2} and \eqref{eqn:thm main bi schro:Linfty} implies that
    \begin{align*}
      \| \Re(V_2) u \|_{N^{s,a,b}} &\lesa \Big( \| V_2 \|_{L^\infty_t H^\ell_x} + \| V_2 \|_{L^2_t H^{\beta+1}_x}\Big) \| u \|_{S^{s,a,0}} \lesa \| V \|_{W^{\ell, a, \beta}} \| u \|_{S^{s,a,0}}
    \end{align*}
as required. Finally, the bound for the $V_3$ contribution follows from the fact that $\supp \widetilde{V}_3 \subset \{ |\tau| + |\xi| \lesa 1 \}$ together with \eqref{eqn:thm main bi schro:left L2}, \eqref{eqn:thm main bi schro:Linfty}, and the estimate \eqref{eqn:thm main bi schro:hi-hi temp} below.

We now turn to the proof of the bounds \eqref{eqn:thm main bi schro:left L2} -- \eqref{eqn:thm main bi schro:stri}. For the first estimate \eqref{eqn:thm main bi schro:left L2}, we begin by decomposing the product $vu$ into
     \begin{equation}\label{eqn:thm main bi schro:freq decomp}
        P_{\lambda_0}(vu) = \sum_{\lambda_1 \in 2^\NN} P_{\lambda_0} (vu_{\lambda_1}) =  \sum_{\lambda_1 \ll \lambda_0 } P_{\lambda_0} (vu_{\lambda_1}) + \sum_{\lambda_1 \gg \lambda_0 } P_{\lambda_0} (vu_{\lambda_1}) + \sum_{\lambda_1 \approx \lambda_0} P_{\lambda_0} (vu_{\lambda_1})
     \end{equation}
and consider the high-low interactions $\lambda_0 \gg \lambda_1$, low-high interactions $\lambda_0 \ll \lambda_1$, and the balanced interactions case $\lambda_0 \approx \lambda_1$.

\textbf{Case 1: $\lambda_0 \gg \lambda_1$.} Applying the product estimate Lemma \ref{lem:elementary product est}, together with Sobolev embedding gives
    \begin{align*}
      \lambda^{s-1 -2a + b}_0 \| &(\lambda_0 + |\p_t|)^a P_{\lambda_0}( v u_{\lambda_1}) \|_{L^2_{t,x}} \\
            &\lesa \lambda_0^{s-1 -2a + b}  \lambda_0^{-a} \| ( \lambda_0 + |\p_t|)^a v_{\approx \lambda_0} \|_{L^\infty_t L^2_x} \lambda_1^{\frac{d-2}{2}} \| (\lambda_0 + |\p_t|)^a u_{\lambda_1} \|_{L^2_t L^{2^*}_x} \\
            &\lesa \lambda_0^{s -\ell -1 -a + b} \lambda_1^{\frac{d-2}{2} + a -s}  \| ( \lambda_0 + |\p_t|)^a v_{\approx \lambda_0} \|_{L^\infty_t H^{\ell -a}_x} \lambda_1^{s-2a} \|(\lambda_1 + |\p_t|)^a u_{\lambda_1} \|_{L^2_t L^{2^*}_x}.
    \end{align*}
Therefore, provided that
    $$ s - \ell \les a + 1 - b, \qquad \ell \g \tfrac{d-4}{2} + b, \qquad (s, \ell) \not = \big( \tfrac{d-2}{2} + a, \tfrac{d-4}{2}+b\big), $$
we obtain
    \begin{align*}
      \bigg( \sum_{\lambda_0 \in 2^\NN}   \lambda^{2(s-1 -2a + b)}_0 &\Big\| \sum_{\lambda_1 \ll \lambda_0} (\lambda_0 + |\p_t|)^a P_{\lambda_0}( v u_{\lambda_1}) \Big\|_{L^2_{t,x}}^2 \bigg)^\frac{1}{2}\\
                &\lesa \bigg( \sum_{\lambda_0 \in 2^\NN} \Big( \sum_{\lambda_1 \ll \lambda_0} \lambda_0^{s -\ell -1 -a + b} \lambda_1^{\frac{d-2}{2} + a -s}  \| ( \lambda_0 + |\p_t|)^a v_{\approx \lambda_0} \|_{L^\infty_t H^{\ell -a}_x} \| u_{\lambda_1} \|_{S^{s,a,0}_{\lambda_1}}\Big)^2   \bigg)^\frac{1}{2} \\
                &\lesa \Big( \sum_{\mu \in 2^\NN} \| (\mu + |\p_t|)^a v_\mu \|_{L^\infty_t H^{\ell -a}}^2\Big)^\frac{1}{2} \sup_{\lambda_1} \| u_{\lambda_1}\|_{S^{s,a,0}_{\lambda_1}}
    \end{align*}
as required.

\textbf{Case 2: $\lambda_0 \ll  \lambda_1$.} We begin by observing that an application of the Sobolev embedding $W^{\frac{d-2}{2}, \frac{d}{d-1}}(\RR^d) \hookrightarrow L^2(\RR^d)$ implies that
        \begin{align*}
            \Big( \sum_{\lambda_0 \lesa \lambda_1} \lambda_0^{2(s-1-2a + b)} \| F_{\lambda_0} \|_{L^2_{t,x}}^2\Big)^\frac{1}{2}
                    &\lesa \| F_{\lesa \lambda_1} \|_{L^2_t H^{s-1-2a+b}_x}\\
                    &\lesa \| F_{\lesa \lambda_1} \|_{L^2_t W^{\frac{d-2}{2} + s-1-2a+b, \frac{d}{d-1}}_x} \lesa \lambda_1^{(s+\frac{d-4}{2}-2a+b)_+} \| F \|_{L^2_t L^{\frac{d}{d-1}}_x}.
        \end{align*}
On the other hand, again applying the product estimate Lemma \ref{lem:elementary product est} gives
    \begin{align*}
       \| (\lambda_1 + |\p_t|)^a ( v_{\approx \lambda_1} u_{\lambda_1}) \|_{L^2_t L^{\frac{d}{d-1}}_x}
                &\lesa  \lambda_1^{-a} \| (\lambda_1 + |\p_t|)^a  v_{\approx \lambda_1} \|_{L^\infty_t L^2_x} \| (\lambda_1 + |\p_t|)^a u_{\lambda_1} \|_{L^2_t L^{2^*}_x} \\
                &\lesa  \lambda_1^{ 2a -s - \ell} \| (\lambda_1 + |\p_t|)^a  v_{\approx \lambda_1} \|_{L^\infty_t H^{\ell-a}_x} \|u_{\lambda_1} \|_{S^{s,a,0}_{\lambda_1}}.
    \end{align*}
Hence, provided that
    $$ s+\ell \g 2a, \qquad \ell \g \tfrac{d-4}{2} + b$$
we see that
    \begin{align*}
      \bigg( \sum_{\lambda_0 \in 2^\NN} \lambda_0^{2(s-1-2a+b)} \Big\| &(\lambda_0 + |\p_t|)^a \sum_{\lambda_1 \gg \lambda_0} P_{\lambda_0}( v u_{ \lambda_1}) \Big\|_{L^2_{t,x}}^2 \bigg)^\frac{1}{2} \\
            &\lesa \sum_{\lambda_1 \in 2^\NN} \bigg( \sum_{\lambda_0 \lesa \lambda_1} \lambda_0^{2(s-1-2a+b)} \| (\lambda_1 + |\p_t|)^a  P_{\lambda_0}( v_{\approx \lambda_1} u_{\lambda_1}) \|_{L^2_{t,x}}^2 \bigg)^\frac{1}{2} \\
            &\lesa \sum_{\lambda_1 \in 2^\NN} \lambda_1^{( s + \frac{d-4}{2} -2a + b)_+} \| (\lambda_1 + |\p_t|)^a ( v_{\approx \lambda_1} u_{\lambda_1}) \|_{L^2_t L^{\frac{d}{d-1}}_x} \\
            &\lesa \sum_{\lambda_1 \in 2^\NN} \lambda_1^{ ( s + \frac{d-4}{2} -2a + b)_+ + 2a -s - \ell} \| (\lambda_1 + |\p_t|)^a v_{\approx \lambda_1} \|_{L^\infty_t H^{\ell -a}_x} \| u_{\lambda_1} \|_{S^{s,a,0}_{\lambda_1}} \\
            &\lesa \Big( \sum_{\mu \in 2^\NN} \| (\mu + |\p_t|)^a v_\mu \|_{L^\infty_t H^{\ell -a}_x}^2 \Big)^\frac{1}{2} \| u \|_{S^{s,a,0}}.
    \end{align*}

\textbf{Case 3: $\lambda_0 \approx \lambda_1$. } Similar to above, we have
    \begin{align*}
         \lambda_0^{s-1-2a + b} \| (\lambda_0 + |\p_t|)^a &P_{\lambda_0}( v u_{\lambda_1}) \|_{L^2_{t,x}} \\
                &\lesa \lambda_0^{s-1-2a + b} \lambda_0^{-a } \| (\lambda_0 + |\p_t|)^a v_{\lesa \lambda_0} \|_{L^\infty_t L^d_x} \| (\lambda_1 + |\p_t|)^a u_{\lambda_1} \|_{L^2_t L^{2^*}_x} \\
                &\lesa \lambda_0^{b-1} \| (\lr{\nabla} + |\p_t|)^a v_{\lesa \lambda_0} \|_{L^\infty_t H^{\frac{d-2}{2} -a}_x} \lambda_1^{s-2a} \| (\lambda_1 + |\p_t|)^a u_{\lambda_1} \|_{L^2_t L^{2^*}_x}
    \end{align*}
which is summable provided that
    $$ b \les 1, \qquad \ell \g \tfrac{d-4}{2} + b. $$
This completes the proof of \eqref{eqn:thm main bi schro:left L2}.

We now turn to the proof of the second estimate \eqref{eqn:thm main bi schro:right L2}. As previously, we apply the frequency decomposition \eqref{eqn:thm main bi schro:freq decomp} and consider each frequency interaction separately.

\textbf{Case 1: $\lambda_0 \gg \lambda_1$.} We start by noting that an application of Sobolev embedding gives
    $$ \sup_{\lambda_0 \in 2^\NN} \lambda_0^{s-\beta - 1} \Big( \| u_{\ll \lambda_0} \|_{L^\infty_t L^d_x} + \lambda_0^{b-1} \| u_{\ll \lambda_0} \|_{L^\infty_{t,x}}\Big) \lesa \| u \|_{L^\infty_t H^s_x} \lesa \| u \|_{S^{s,a,0}} $$
provided that
    $$ s \les \beta + 1, \qquad \beta \g \tfrac{d-4}{2} + b, \qquad ( \beta, b) \not = (\tfrac{d-2}{2}, 1). $$
Hence via H\"older's inequality we obtain
    \begin{align*}
      \bigg( \sum_{\lambda_0 \in 2^\N} &\lambda_0^{2s} \|  P_{\lambda_0}( v u_{\ll \lambda_0}) \|_{L^2_t L^{2_*}_x}^2 + \lambda_0^{2(s -1 + b)} \| P_{\lambda_0}(v u_{\ll \lambda_0} ) \|_{L^2_{t,x}}^2\bigg)^\frac{1}{2} \\
                &\lesa \Big( \sum_{\lambda_0 \in 2^\NN} \lambda_0^{2(\beta + 1)} \| v_{\approx \lambda_0} \|_{L^2_{t,x}}^2\Big)^\frac{1}{2}
                        \sup_{\lambda_0 \in 2^\NN} \lambda_0^{s-\beta -1 } \Big( \| u_{\ll \lambda_0} \|_{L^\infty_t L^d_x} + \lambda_0^{b-1} \| u_{\ll \lambda_0} \|_{L^\infty_{t,x}}\Big) \\
                &\lesa \| v \|_{L^2_t H^{\beta + 1}_x} \| u \|_{S^{s,a,0}}.
    \end{align*}

\textbf{Case 2: $\lambda_0 \ll \lambda_1$.} An application of Bernstein's inequality together with the square function characterisation of $L^p_x$ gives
    \begin{align*}
      \Big( \sum_{\lambda_0 \ll \lambda_1} \lambda_0^{2s} \| F_{\lambda_0} \|_{L^2_t L^{2_*}_x}^2 + \lambda_0^{2(s-1+b)} \| F_{\lambda_0} \|_{L^2_{t,x}}^2\Big)^\frac{1}{2}
            &\lesa \lambda_1^{s+b} \Big( \sum_{\lambda_0 \in 2^\NN} \| F_{\lambda_0} \|_{L^2_t L^{2_*}_x}^2\Big)^\frac{1}{2} \\
            &\lesa \lambda_1^{s+b} \Big\| \Big( \sum_{\lambda_0 \in 2^\NN} |F_{\lambda_0}|^2 \Big)^\frac{1}{2} \Big\|_{L^2_t L^{2_*}_x} \lesa \lambda_1^{s+b} \| F \|_{L^2_t L^{2_*}_x}.
    \end{align*}
Therefore applying Bernstein's inequality and H\"older's inequality we conclude that
    \begin{align*}
      \bigg( \sum_{\lambda_0 \in 2^\NN} \lambda_0^{2s} \Big\|& \sum_{\lambda_1 \gg \lambda_0} P_{\lambda_0}(v u_{\lambda_1}) \Big\|_{L^2_t L^{2_*}_x}^2 + \lambda_0^{2(s-1+b)} \Big\| \sum_{\lambda_1 \gg \lambda_0} P_{\lambda_0}( v u_{\lambda_1}) \Big\|_{L^2_{t,x}}^2\bigg)^\frac{1}{2} \\
            &\lesa \sum_{\lambda_1 \in 2^\NN}  \bigg( \sum_{\lambda_0 \ll \lambda_1} \lambda_0^{2s} \| P_{\lambda_0}(v_{\approx \lambda_1} u_{\lambda_1}) \|_{L^2_t L^{2_*}_x}^2 + \lambda_0^{2(s-1+b)} \| P_{\lambda_0}( v_{\approx \lambda_1} u_{\lambda_1}) \|_{L^2_{t,x}}^2\bigg)^\frac{1}{2} \\
            &\lesa \sum_{\lambda_1 \in 2^\NN} \lambda_1^{s+b} \| v_{\approx \lambda_1} u_{\lambda_1} \|_{L^2_t L^{2_*}_x} \\
            &\lesa \sum_{\lambda_1 \in 2^\NN} \lambda_1^{s+b + \frac{d-2}{2}} \| v_{\approx \lambda_1} \|_{L^2_{t,x}} \| u_{\lambda_1} \|_{L^\infty_t L^2_x} \lesa \| v \|_{L^2_t H^{\beta+1}_x} \| u \|_{S^{s,a,0}}
    \end{align*}
provided that
    $$ \beta \g \tfrac{d-4}{2} + b . $$

\textbf{Case 3: $\lambda_0 \approx \lambda_1$.} Let $\frac{1}{r} = \frac{1-b}{d}$. Similar to the above, an application of Sobolev embedding gives
    $$  \| v \|_{L^2_t L^d_x} + \| v \|_{L^2_t L^r_x} \lesa \| v \|_{L^2_t H^{\beta+1}_x} $$
provided that
    $$ \beta \g \tfrac{d-4}{2} + b, \qquad (\beta, b) \not = ( \tfrac{d-2}{2}, 1). $$
Consequently, via Bernstein's inequality we have
    \begin{align*}
      \bigg( \sum_{\lambda_0 \in 2^\NN} \lambda_0^{2s} \| v u_{\approx \lambda_0} \|_{L^2_t L^{2_*}_x}^2 + \lambda_0^{2(s-1+b)} \| v u_{\approx \lambda_0} \|_{L^2_{t,x}}^2 \bigg)^\frac{1}{2}
                &\lesa \Big( \| v  \|_{L^2_t L^d_x} + \| v \|_{L^2_t L^r_x}\Big) \Big( \sum_{\lambda_0 \in 2^\NN} \lambda_0^{2s} \| u_{\approx \lambda_0} \|_{L^\infty_t L^2_x}^2\big)^\frac{1}{2}\\
                &\lesa \| v \|_{L^2_t H^{\beta+1}_x} \| u \|_{S^{s,a,0}}.
    \end{align*}
This completes the proof of \eqref{eqn:thm main bi schro:right L2}.

 The $L^\infty_t L^2_x$ bound \eqref{eqn:thm main bi schro:Linfty} holds provided that $s\les \ell + 2$, $\ell \g \frac{d-4}{2}$, and $(s, \ell) \not = (\frac{d}{2}, \frac{d-4}{2})$. The proof is standard, and follows by adapting the proof of the product estimate $\| fg \|_{H^{s-2}} \lesa \| f \|_{H^\ell} \| g \|_{H^s}$.

 We now turn to the proof of the final estimate \eqref{eqn:thm main bi schro:stri}. As before, we decompose the inner sum into high-low interactions $\lambda_0 \gg \lambda_1$, low-high interactions $\lambda_0 \ll \lambda_1$, and the balanced interactions case $\lambda_0 \approx \lambda_1$, and consider each case separately.

\textbf{Case 1: $\lambda_0 \gg \lambda_1$.} The assumption on the Fourier support of $v$ implies the non-resonant identity
        $$  C_{ \les ( \frac{\lambda_0}{2^8})^2} P_{\lambda_0}( v u_{\lambda_1}) = C_{ \les ( \frac{\lambda_0}{2^8})^2} P_{\lambda_0}( v_{\approx \lambda_0} P^{(t)}_{\approx \lambda_0^2} C_{> (\frac{\lambda_1}{2^8})^2} u_{\lambda_1}).$$
Hence the disposability of the multiplier $P^N_{\lambda_0}$, and Bernstein's inequality, gives
        \begin{align*}
            \big\| P^N_{\lambda_0}( v u_{\lambda_1}) \big\|_{L^2_t L^{2_*}_x}
                        &\lesa \lambda_1^{\frac{d}{2}-1} \| v_{\approx \lambda_0} \|_{L^\infty_t L^2_x} \| P^{(t)}_{\approx \lambda_0^2} C_{> (\frac{\lambda_1}{2^8})^2} u_{\lambda_1} \|_{L^2_{t,x}} \\
                        &\lesa \lambda_1^{\frac{d}{2}-1} \lambda_0^{-2} \| v_{\approx \lambda_0} \|_{L^\infty_t L^2_x} \Big\| \Big(\frac{\lambda_1 + |\p_t|}{\lambda_1^2+|\p_t|}\Big)^a ( i \p_t + \Delta) u_{\lambda_1} \Big\|_{L^2_{t,x}}
        \end{align*}
Consequently, we conclude that
    $$ \lambda^s_0 \big\| P^N_{\lambda_0}( v u_{\lambda_1}) \big\|_{L^2_t L^{2_*}_x} \lesa \lambda^{s - \ell -2}_0 \lambda_1^{\frac{d}{2}-s} \| v_{\approx \lambda_0} \|_{L^\infty_t H^\ell_x} \| u \|_{S^{s,a,0}} $$
which is summable provided that
    $$ s \les \ell + 2, \qquad \ell \g \tfrac{d-4}{2}, \qquad (s, \ell) \not = \big( \tfrac{d}{2}, \tfrac{d-4}{2}\big). $$

\textbf{Case 2: $\lambda_0 \ll  \lambda_1$.}
We first observe that the Fourier support assumption on $v$ implies that
\[
C_{ \les ( \frac{\lambda_0}{2^8})^2} P_{\lambda_0}(vu_{\lambda_1})=C_{ \les ( \frac{\lambda_0}{2^8})^2} P_{\lambda_0}(v_{\approx \lambda_1}C_{\approx \lambda_1^2}u_{\lambda_1}).
\]
Bernstein's inequality and the temporal product estimate in Lemma \ref{lem:elementary product est} implies
\begin{align*}
  \lambda_0^s\|P^N_{\lambda_0}(vu_{\lambda_1})\|_{L^2_tL^{2_*}_x}& \lesa \lambda_0^{s+\frac{d-2}{2}-2a}\|(\lambda_1+|\partial_t|)^a (v_{\approx \lambda_1}C_{\approx \lambda_1^2}u_{\lambda_1})\|_{L^2_tL^{1}_x}\\
        & \lesa \lambda_0^{s+\frac{d-2}{2}-2a}\lambda_1^{-a}\|(\lambda_1+|\partial_t|)^a v_{\approx \lambda_1}\|_{L^\infty_tL^{2}_x}\|(\lambda_1+|\partial_t|)^a C_{\approx \lambda_1^2} u_{\lambda_1} \|_{L^{2}_{t,x}}\\
        & \lesa \lambda_0^{s+\frac{d-2}{2}-2a}\lambda_1^{2a-s-\ell-1}\|(\lambda_1+|\partial_t|)^a v_{\approx \lambda_1}\|_{L^\infty_tH^{\ell-a}_x}\| u_{\lambda_1}\|_{S^{s,a,0}_{\lambda_1}}
\end{align*}
which is certainly summable under the assumption that
\[
s+\ell\g 2a, \quad \ell\g \tfrac{d-4}{2}.
\]

\textbf{Case 3: $\lambda_0 \approx \lambda_1$. } We now consider the remaining high-high interactions. Via the product estimate in Lemma \ref{lem:elementary product est} we obtain
\begin{align}
  \lambda_0^s\|P^N_{\lambda_0}(vu_{\lambda_1})\|_{L^2_tL^{2_*}_x}
        &\lesa\lambda_1^{s-2a}\|(\lambda_1+|\partial_t|)^a(v_{\lesa \lambda_1}u_{\lambda_1})\|_{L^2_tL^{2_*}_x}\notag \\
        &\lesa\lambda_1^{s-2a}\lambda_1^{-a}\|(\lambda_1+|\partial_t|)^av_{\lesa \lambda_1}\|_{L^\infty_t L^{\frac{d}{2}}_x}\|(\lambda_1+|\partial_t|)^au_{\lambda_1}\|_{L^2_tL^{2^*}_x}\notag \\
&\lesa\|(\lr{\nabla}+|\partial_t|)^av\|_{L^\infty_t H^{\ell-a}_x}\|u_{\lambda_1}\|_{S^{s,a,0}_{\lambda_1}},
\label{eqn:thm main bi schro:hi-hi temp}
\end{align}
where we have used that $\ell\g \frac{d-4}{2}$ for the Sobolev embedding, and the summation is trivial in this case.
\end{proof}

We require a local version of the bilinear estimate, with the advantage that we can place $v$ in dispersive norms of the form $L^\infty_t L^2_x + L^2_t L^d_x$.

\begin{corollary}\label{cor:local schro bi}
Let $d\g 4$. Assume that $\beta \g \max\{ \frac{d-4}{2}, s-1\}$ and
\begin{equation}\label{eqn:cor local schro bi:cond} 0\les a \les 1, \qquad 0\les s \les \ell + 2, \qquad \ell \g \frac{d-4}{2}, \qquad s-\ell \les a + 1, \qquad s+\ell \g 2a,\end{equation}
with $(s, \ell) \not = (\frac{d-2}{2} +a, \frac{d-4}{2})$. There exists $C>0$ such that for any interval $0\in I \subset \RR$ we have
        $$ \| \mc{I}_0\big( \Re(V) u \big) \big\|_{S^{s,a,0}(I)} \les C \| V \|_{W^{\ell, a, \beta}(I) + L^2_t W^{s,d}_x(I\times \RR^d)} \| u \|_{S^{s,a,0}(I)}. $$
\end{corollary}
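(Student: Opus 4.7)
My plan is to view Corollary~\ref{cor:local schro bi} as a localisation of Theorem~\ref{thm:main bilinear schro} with the additional feature that the wave component is permitted in the dispersive norm $L^2_t W^{s,d}_x$. The strategy is to split $V$ along the two summands and treat each piece separately.

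Given $\e>0$, I would first choose a decomposition $V=V_1+V_2$ on $I$ together with global extensions $\widetilde V_1,\widetilde V_2,\widetilde u$ of $V_1,V_2,u$ that almost realise the relevant restriction and sum norms. By causality of $\mc{I}_0$ (the Duhamel integral from $0$ to $t$ only sees values on $[0,t]\subset I$), the sum $\mc{I}_0(\Re(\widetilde V_1)\widetilde u)+\mc{I}_0(\Re(\widetilde V_2)\widetilde u)$ is a valid global extension of $\mc{I}_0(\Re(V)u)$ from $I$, so by definition of the restriction norm it suffices to bound each piece $\|\mc{I}_0(\Re(\widetilde V_j)\widetilde u)\|_{S^{s,a,0}}$ globally on $\RR$.

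For the $\widetilde V_1\in W^{\ell,a,\beta}$ piece I would apply Theorem~\ref{thm:main bilinear schro} directly with $b=0$: the hypotheses of the corollary are precisely those of the theorem in that case (in particular $\ell\ge\tfrac{d-4}{2}=b+\tfrac{d-4}{2}$, $s-\ell\le a+1=a+1-b$, $\beta\ge\max\{s-1,\tfrac{d-4}{2}\}=\max\{s-1,\tfrac{d-4}{2}+b\}$, the excluded singular point coincides, and the exclusion $(\beta,b)\ne(\tfrac{d-2}{2},1)$ is automatic since $b=0$). This yields $\|\Re(\widetilde V_1)\widetilde u\|_{N^{s,a,0}}\lesa\|\widetilde V_1\|_{W^{\ell,a,\beta}}\|\widetilde u\|_{S^{s,a,0}}$, and composing with the energy inequality in Lemma~\ref{lem:energy ineq} converts this to the required $S^{s,a,0}$ bound on $\mc{I}_0(\Re(\widetilde V_1)\widetilde u)$.

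For the $\widetilde V_2\in L^2_t W^{s,d}_x$ piece I would work directly from the characterisation \eqref{eqn:norm chara} of $S^{s,a,0}_\lambda$, valid since $0\le a\le 1$. Using $(i\p_t+\Delta)\mc{I}_0=\mathrm{id}$ and the uniform bound of the temporal multiplier $(\tfrac{\lambda+|\p_t|}{\lambda^2+|\p_t|})^a$ by $1$, this reduces the task (after $\ell^2$-summation in $\lambda$ via Littlewood--Paley) to controlling three quantities by $\|\widetilde V_2\widetilde u\|_{L^2_tW^{s,2_*}_x}$: the $L^\infty_tH^s_x$ and low-modulation endpoint Strichartz $L^2_tW^{s,2^*}_x$ parts follow from the double endpoint Strichartz estimate, while the $L^2_{t,x}$ part additionally uses Bernstein's inequality $\|P_\lambda G\|_{L^2_x}\lesa\lambda\|P_\lambda G\|_{L^{2_*}_x}$ to absorb the weight $\lambda^{s-1}$ into $\lambda^s$ in the $L^{2_*}_x$ norm. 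The remaining product estimate
\[
\|\widetilde V_2\widetilde u\|_{L^2_tW^{s,2_*}_x}\lesa\|\widetilde V_2\|_{L^2_tW^{s,d}_x}\|\widetilde u\|_{L^\infty_tH^s_x}
\]
is a standard fractional Leibniz combined with H\"older's inequality exploiting $\tfrac{1}{2_*}=\tfrac{1}{d}+\tfrac{1}{2}$ and the embedding $W^{s,d}\hookrightarrow L^d$ (valid for $s\ge 0$). I expect the only mildly delicate step to be the bookkeeping around restriction/extension norms together with the causality-based identification of $\mc{I}_0$ on $I$; the bilinear analysis itself requires no new resonant-frequency decomposition beyond what is already in Theorem~\ref{thm:main bilinear schro}.
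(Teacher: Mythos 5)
Your proposal is correct and follows essentially the same route as the paper: the $W^{\ell,a,\beta}$ piece is handled by Theorem \ref{thm:main bilinear schro} (with $b=0$) combined with the energy inequality of Lemma \ref{lem:energy ineq}, while the $L^2_tW^{s,d}_x$ piece is reduced, via the characterisation \eqref{eqn:norm chara}, the (double) endpoint Strichartz estimate, Bernstein, and the causal extension-by-zero on $I$, to the product estimate $\|fg\|_{W^{s,2_*}}\lesa\|f\|_{W^{s,d}}\|g\|_{H^s}$. This is exactly the paper's argument, with your extension/restriction bookkeeping merely making explicit what the paper leaves implicit.
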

\begin{proof}
In view of Lemma \ref{lem:energy ineq} and Theorem \ref{thm:main bilinear schro}, it suffices to prove that for any $s\g 0$ and $0\les a \les 1$ we have
        \begin{equation}\label{eqn:cor local schro bi:initial bound}
            \| \mc{I}_0(\Re(V) u ) \|_{S^{s,a,0}(I)} \lesa \| V \|_{L^2_t W^{s,d}_x(I\times \RR^d)} \| u \|_{S^{s,a,0}(I)}.
        \end{equation}
An application of Bernstein's inequality together with \eqref{eqn:norm chara} gives
    \begin{align*}
      \| u_\lambda \|_{S^{s,a,0}_\lambda}  &\approx \lambda^s \Big( \| u_\lambda \|_{L^\infty_t L^2_x} + \| P^N_\lambda u \|_{L^2_t L^{2^*}_x}\Big) + \lambda^{s-1} \Big\| \Big( \frac{\lambda + |\p_t|}{\lambda^2 + |\p_t|}\Big)^a (i\p_t + \Delta) u_\lambda \Big\|_{L^2_{t,x}}\\
      &\lesa \lambda^s \Big( \| u_\lambda \|_{L^\infty_t L^2_x} + \| u_\lambda \|_{L^2_t L^{2^*}_x} + \| (i\p_t + \Delta) u_\lambda \|_{L^2_t L^{2_*}_x}\Big)
    \end{align*}
and hence the endpoint Strichartz estimate  implies that after extending $F$ from $I$ to $\RR$ by zero, that
    $$
          \| \mc{I}_0[F] \|_{S^{s,a,0}(I)}
                    \les \|  \mc{I}_0[ \ind_I F] \|_{S^{s,a,0}} \lesa \Big( \sum_{\lambda \in 2^\NN} \lambda^{2s} \| F_\lambda \|_{L^2_t L^{2_*}_x(I\times \RR^d)}^2 \Big)^\frac{1}{2} \lesa \| F \|_{L^2_t W^{s, 2_*}_x(I\times \RR^d)}.
    $$
The inequality \eqref{eqn:cor local schro bi:initial bound} then follows from the elementary product estimate
        $$ \| f g\|_{W^{s, 2_*}(\RR^d)}  \lesa \|f \|_{W^{s, d}(\RR^d)} \| g \|_{H^s(\RR^d)} $$
which holds for any $s\g 0$.
\end{proof}

\section{Bilinear estimates for the wave nonlinearity}\label{sec:bil-est-wave}
Here we give the bilinear estimates required to control solutions to
        $$ (i\p_t + |\nabla|) v = |\nabla|( \overline{\varphi} \psi ), \qquad v(0) = 0$$
with $\varphi, \psi \in S^{s,a, b}$. The main estimate we prove is the following.

\begin{theorem}[Bilinear estimate for wave nonlinearity]\label{thm:main bilinear wave}
Let $d\g 4$, $s , \ell, \beta \g 0$, and $0\les a, b \les 1$ satisfy
        $$ \beta \les \min\big\{s, 2s - \tfrac{d-2}{2} -a\big\}, \qquad  2a \les 2s - \ell - \tfrac{d-2}{2}, \qquad a-b \les s-\ell$$
and
        $$ (s, \ell) \not = \big( \tfrac{d}{2}, \tfrac{d+2}{2}\big), \big( \tfrac{d-2}{2}+a, \tfrac{d-2}{2} + b\big), \qquad (s, \beta) \not = (\tfrac{d-2}{2}+a, \tfrac{d-2}{2}+a). $$
If $\varphi, \psi \in S^{s,a, b}$,
then
        $$ \|\mathcal{J}_0(|\nabla|(\overline{\varphi} \psi))\|_{W^{\ell, a, \beta}} \lesa \| \varphi \|_{S^{s,a, b}} \| \psi\|_{S^{s,a, b}}.$$
\end{theorem}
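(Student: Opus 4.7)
The natural plan is to apply the wave energy inequality, Lemma \ref{lem:energy ineq wave}, and thereby reduce to bounding $|\nabla|(\overline{\varphi}\psi)$ in the space $R^{\ell,a,\beta}$. Since $|\nabla|$ on a frequency-$\lambda$ piece is essentially multiplication by $\lambda$, and the $R^{\ell,a,\beta}_\lambda$ norm breaks into three components (an $L^\infty_t L^2_x$ piece scaled like $H^{\ell-2}$, a low-temporal-frequency weighted $L^1_t L^2_x$ piece, and an $L^2_{t,x}$ piece at Sobolev level $\beta-1$), it suffices to establish three $\ell^2$-summable bilinear estimates for $P_\lambda(\overline{\varphi}\psi)$: one in $\lambda^{\ell-1} L^\infty_t L^2_x$, one of the form $\lambda^{\ell+1-a} \| (\lambda+|\p_t|)^a P^{(t)}_{\les (\lambda/2^8)^2} P_\lambda(\overline{\varphi}\psi) \|_{L^1_t L^2_x}$, and one in $\lambda^{\beta} L^2_{t,x}$.

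The $L^\infty_t L^2_x$ bound reduces to the Sobolev product estimate $\|\varphi\psi\|_{H^{\ell-1}} \lesa \|\varphi\|_{H^s}\|\psi\|_{H^s}$; the hypothesis $2a \les 2s - \ell - (d-2)/2$ forces $\ell - 1 \les 2s - d/2$, and the excluded endpoint $(s,\ell) \not = (d/2,(d+2)/2)$ is precisely the critical case of this embedding. Only the $L^\infty_t H^s$ component of $\|\cdot\|_{S^{s,a,b}}$ is used here. For the $L^2_{t,x}$ piece, I would perform a dyadic decomposition in the frequencies $\mu_1,\mu_2$ of $\overline{\varphi}$ and $\psi$, and split into the high-low case ($\mu_1 \approx \lambda \gg \mu_2$ and its transpose) and the high-high case ($\mu_1 \approx \mu_2 \gtrsim \lambda$). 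In each case H\"older, one endpoint Strichartz factor controlled via \eqref{eqn:stri with loss} by $S^{s,a,b}$, and one Sobolev-embedded $L^\infty_t L^r_x$ factor deliver the bound, with the temporal product estimate of Lemma \ref{lem:elementary product est} absorbing the $(\lambda+|\p_t|)^a$ weight. The high-high case produces the exponent $\lambda^{\beta} \mu^{a+(d-2)/2-2s}$ after reduction, which is $\ell^2$-summable over $\mu \gtrsim \lambda$ exactly when $\beta \les 2s - (d-2)/2 - a$; the excluded endpoint $(s,\beta) \not = ((d-2)/2+a,(d-2)/2+a)$ is the corresponding logarithmic Sobolev failure.

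For the weighted $L^1_t L^2_x$ piece, I would exploit the resonance structure inherited from the restriction to temporal frequencies $\les \lambda^2$. Writing $\varphi = P^N\varphi + P^F\varphi$ and likewise for $\psi$, the $(P^N\varphi)(\overline{P^N\psi})$ contribution has temporal frequency concentrated near $|\xi_2|^2-|\xi_1|^2$, forcing a constrained geometry in the high-high regime where $|\xi_1|\approx|\xi_2|$, and is then estimated via Bernstein from the endpoint Strichartz norm. The mixed- and high-high-modulation contributions gain from the $(i\p_t + \Delta)$-weighted $L^2_{t,x}$-component of $\|\cdot\|_{S^{s,a,b}}$ via the multiplier $((\lambda+|\p_t|)/(\lambda^2+|\p_t|))^a$, after another application of Lemma \ref{lem:elementary product est}. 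The $b$-parameter is used here precisely to absorb the leftover derivatives in these mixed-modulation configurations, which explains the hypothesis $a-b \les s-\ell$.

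The main obstacle will be the high-high interaction in the $L^2_{t,x}$ estimate. The Strichartz norm embedded in $S^{s,a,b}$ carries the loss \eqref{eqn:stri with loss} at small temporal frequencies, yet a bilinear product at high-high frequencies can place at most one factor in $L^\infty_t L^2_x$; carefully balancing this loss against the Sobolev inequality on the second factor is what dictates the sharp conditions $\beta \les 2s - (d-2)/2 - a$ and $2a \les 2s - \ell - (d-2)/2$, both of which are saturated. The use of the fractional temporal product estimate of Lemma \ref{lem:elementary product est}, rather than a naive Leibniz rule which fails for the non-local multiplier $(\lambda+|\p_t|)^a$, is essential here, and it is this interaction that will concentrate the bulk of the technical work.
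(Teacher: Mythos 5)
Your proposal follows essentially the same route as the paper: reduce via the wave energy inequality (Lemma \ref{lem:energy ineq wave}) to frequency-localised bounds on $P_\mu(\overline{\varphi}\psi)$ at the levels $\mu^{\ell-1}L^\infty_tL^2_x$, $\mu^{\ell+1-a}(\mu+|\p_t|)^a$-weighted $L^1_tL^2_x$ on low temporal frequencies, and $\mu^\beta L^2_{t,x}$, then run the high-low/high-high trichotomy with the non-resonant modulation identity (low$\times$low modulation killed by $P^{(t)}_{\ll\mu^2}$ in the high-low regime), Lemma \ref{lem:elementary product est} for the temporal weight, and Strichartz-with-loss plus Bernstein/Sobolev, with the constraints and excluded endpoints arising exactly where you place them. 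Only cosmetic points differ: the $R^{\ell,a,\beta}$ norm also contains the low-frequency $\|G_{\les 2^{16}}\|_{L^1_tL^2_x}$ term (handled in the paper by a separate easy estimate), the $L^2_{t,x}$ component carries no temporal weight so Lemma \ref{lem:elementary product est} is only needed for the $L^1_tL^2_x$ piece, and the $L^\infty_tL^2_x$ bound must be proved blockwise (time-sup inside the dyadic sum) rather than quoted as the plain Sobolev product estimate — all of which your framework accommodates without change.
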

\begin{proof} An application of the energy inequality in Lemma \ref{lem:energy ineq wave} implies that it suffices to prove the bounds
    \begin{align}
      \Big( \sum_{\mu\in 2^\NN} \mu^{2(\ell -a +1)} \| (\mu + |\p_t|)^a P_{\mu}P^{(t)}_{\ll \mu^2} (\overline{\varphi} \psi) \|_{L^1_t L^2_x}^2\Big)^\frac{1}{2}
            &\lesa \| \varphi \|_{S^{s,a,b}} \| \psi \|_{S^{s,a,b}}, \label{eqn:thm main bi wave:LinfL2 with weight} \\
      \Big( \sum_{\mu \in 2^\NN} \mu^{2(\ell -1)} \| P_{\mu} ( \overline{\varphi} \psi) \|_{L^\infty_t L^2_x}^2 \Big)^\frac{1}{2}
            &\lesa \| \varphi \|_{S^{s,a,b}} \| \psi \|_{S^{s,a,b}}, \label{eqn:thm main bi wave:LinfL2} \\
      \Big( \sum_{\mu \in 2^\NN} \mu^{2\beta} \| P_{\mu}(\overline{\varphi} \psi) \|_{L^2_{t,x}}^2\Big)^\frac{1}{2}
            &\lesa \| \varphi \|_{S^{s,a,b}} \| \psi \|_{S^{s,a,b}}, \label{eqn:thm main bi wave:L2}\\
      \| P_{\les 2^{16}} ( \overline{\varphi} \psi ) \|_{L^1_t L^2_x}
            &\lesa \| \varphi \|_{S^{s,a,b}} \| \psi \|_{S^{s,a,b}}. \label{eqn:thm main bi wave:low freq}
    \end{align}
We start with the proof of \eqref{eqn:thm main bi wave:LinfL2 with weight} and decompose the product $\overline{\varphi}\psi$ into the standard frequency trichotomy
        \begin{equation}\label{eqn:thm main bi wave:freq decomp}
            P_{\mu}(\overline{\varphi} \psi) = P_{\mu}(\overline{\varphi} \psi_{\ll \mu}) + \sum_{\lambda_1 \approx \lambda_2 \gtrsim \mu} P_{\mu}(\overline{\varphi}_{\lambda_1} \psi_{\lambda_2}) + P_{\mu}(\overline{\varphi}_{\ll \mu} \psi).
        \end{equation}
In view of the fact that the left hand side of \eqref{eqn:thm main bi wave:LinfL2 with weight} is invariant with respect to complex conjugation, it suffices to consider the first two terms in \eqref{eqn:thm main bi wave:freq decomp}, i.e. the high-low and high-high frequency interactions.

\textbf{Proof of \eqref{eqn:thm main bi wave:LinfL2 with weight} case 1: high-low interactions.} Note that in this case we must have $\mu \gg1$. A computation then gives the non-resonant identity
        $$ P^{(t)}_{\ll \mu^2}P_{\mu}(\overline{ \varphi} \psi_{\ll \mu}) = P^{(t)}_{\ll \mu^2}P_{\mu}(\overline{ C_{\ll \mu^2} \varphi}_{\approx \mu} P^{(t)}_{\approx \mu^2} \psi_{\ll \mu}) + P^{(t)}_{\ll \mu^2}P_{\mu}( \overline{C_{\gtrsim \mu^2} \varphi}_{\approx \mu} \psi_{\ll \mu}) = A_1 + A_2. $$
To bound the $A_1$ term, we observe that
    \begin{align*}
        \mu^{\ell + 1-a} \| (|\p_t| + \mu)^a P^{(t)}_{\ll \mu^2}&P_{\mu}(\overline{ C_{\ll \mu^2} \varphi}_{\approx \mu} P^{(t)}_{\approx \mu^2} \psi_{\ll \mu}) \|_{L^1_t L^2_x}\\
                    &\lesa \mu^{\ell + 1 +a} \| C_{\ll \mu^2} \varphi_{\approx \mu} \|_{L^2_t L^{2^*}_x} \| P^{(t)}_{\approx \mu^2} \psi_{\ll \mu} \|_{L^2_t L^d_x} \\
                    &\lesa  \mu^{\ell -s -1 + a}  \mu^{s-2a} \| (\mu + |\p_t|)^a \varphi_{\approx \mu} \|_{L^2_t L^{2^*}_x} \| (i\p_t + \Delta) P^{(t)}_{\approx \mu^2} \psi_{\ll \mu} \|_{L^2_t H^{\frac{d-2}{2}}_x} \\
                    &\lesa  \mu^{\ell -s -1 + a + ( \frac{d}{2} - s)_+}   \mu^{s-2a} \| (\mu + |\p_t|)^a \varphi_{\approx \mu} \|_{L^2_t L^{2^*}_x} \| \psi \|_{S^{s,a, 0}}.
    \end{align*}
Provided that
    $$  s-\ell \g a-b, \qquad 2s - \ell - \tfrac{d-2}{2} \g a $$
we can sum up over $\mu \gg 1$ to obtain \eqref{eqn:thm main bi wave:LinfL2 with weight} for the $A_1$ contribution. To bound $A_2$, we apply the temporal product estimate in  Lemma \ref{lem:elementary product est} which gives
    \begin{align*}
      \mu^{\ell + 1 - a} &\| (\mu + |\p_t|)^a P^{(t)}_{\ll \mu^2}P_{\mu}( \overline{C_{\gtrsim \mu^2} \varphi}_{\approx \mu} \psi_{\ll \mu}) \|_{L^1_t L^2_x}\\
                    &\lesa \mu^{\ell + 1 - 2a} \| ( \mu + |\p_t|)^a C_{\gtrsim \mu^2} \varphi_{\approx \mu} \|_{L^2_{t,x}} \| ( \mu + |\p_t|)^a \psi_{\ll \mu } \|_{L^2_t L^\infty_x} \\
                    &\lesa \mu^{\ell -1} \Big\| \Big(\frac{ \mu + |\p_t|}{\mu^2 + |\p_t|}\Big)^a (i\p_t + \Delta) \varphi_{\approx \mu} \Big\|_{L^2_{t,x}}
                        \sum_{\lambda \ll \mu}   \Big( \frac{\mu}{\lambda}\Big)^a \lambda^{\frac{d-2}{2}} \| (\lambda + |\p_t|)^a \psi_{\lambda} \|_{L^2_t L^{2^*}_x} \\
                    &\lesa \mu^{\ell - s -b +a } \sum_{\lambda \ll \mu} \lambda^{\frac{d-2}{2} +a -s} \Big\| \Big(\frac{ \mu + |\p_t|}{\mu^2 + |\p_t|}\Big)^a (i\p_t + \Delta) \varphi_{\approx \mu} \Big\|_{L^2_t H^{s+b-1}_x}  \| \psi \|_{S^{s,a,0}}.
    \end{align*}
This can be summed up over $\mu \gg 1$ to give \eqref{eqn:thm main bi wave:LinfL2 with weight} for the $A_2$ contribution provided that
        $$ s-\ell \g a -b, \qquad 2s -\ell - \tfrac{d-2}{2} \g 2a -b, \qquad (s, \ell) \not = (\tfrac{d-2}{2}+a, \tfrac{d-2}{2}+b). $$

\textbf{Proof of \eqref{eqn:thm main bi wave:LinfL2 with weight} case 2: high-high interactions.} An application of the product estimate in Lemma \ref{lem:elementary product est} together with Bernstein's inequality gives
  \begin{align*}
   \lambda_1^{\ell + 1 -a} \| (\lambda_1 + |\p_t|)^a ( \overline{\varphi}_{\lambda_1} \psi_{\lambda_2}) \|_{L^1_t L^2_x}
                &\lesa \lambda_1^{\ell + 1 - 2a} \lambda_1^{\frac{d-4}{2}} \| ( \lambda_1 + |\p_t|)^a \varphi_{\lambda_1} \|_{L^2_t L^{2^*}_x} \| ( \lambda_2 + |\p_t|)^a \psi_{\lambda_2} \|_{L^2_t L^{2^*}_x} \\
                &\lesa \lambda_1^{\ell + \frac{d-2}{2} + 2a - 2s} \| \varphi_{\lambda_1} \|_{S^{s,a,0}_{\lambda_1}} \| \psi_{\lambda_2} \|_{S^{s,a,0}_{\lambda_2}}.
  \end{align*}
On the other hand, since $\ell + 1 -a \g 0$, we have
    \begin{align*}
      \Big( \sum_{\mu \lesa \lambda_1} \mu^{2(\ell + 1-a)} \| (\mu + |\p_t|)^a P_{\mu} P^{(t)}_{\ll \mu^2} F \|_{L^1_t L^2_x}^2\Big)^\frac{1}{2}
                &\lesa \lambda_1^{\ell + 1 -a} \Big( \sum_{\mu \lesa \lambda_1} \| (\lambda_1 + |\p_t|)^a P_{\mu} F \|_{L^1_t L^2_x}^2\Big)^\frac{1}{2} \\
                &\lesa \lambda_1^{\ell + 1 - a } \| ( \lambda_1 + |\p_t|)^a F \|_{L^1_t L^2_x}.
    \end{align*}
Therefore summing up gives
    \begin{align*}
      \Big( \sum_{\mu \gg 1 } \mu^{2(\ell + 1 -a)} \Big\| &\sum_{\lambda_1\approx \lambda_2 \gtrsim \mu} (\mu + |\p_t|)^a P_\mu P^{(t)}_{\ll \mu^2}(\overline{\varphi}_{\lambda_1} \psi_{\lambda_2}) \Big\|_{L^1_t L^2_x}^2 \Big)^\frac{1}{2} \\
            &\lesa \sum_{\lambda_1 \approx \lambda_2} \lambda_1^{\ell + 1 -a} \| (\lambda_1 + |\p_t|)^a (\overline{\varphi}_{\lambda_1} \psi_{\lambda_2}) \|_{L^1_t L^2_x} \\
            &\lesa \sum_{\lambda_1 \approx \lambda_2} \lambda_1^{\ell + \frac{d-2}{2} + 2a - 2s} \| \varphi_{\lambda_1} \|_{S^{s,a,0}_{\lambda_1}} \| \psi_{\lambda_2} \|_{S^{s,a,0}_{\lambda_2}} \lesa \| \varphi \|_{S^{s,a,b}} \| \psi \|_{S^{s,a,b}}
    \end{align*}
where we used the assumption
    $$ 2s - \ell  - \tfrac{d-2}{2} \g 2a. $$
This completes the proof of \eqref{eqn:thm main bi wave:LinfL2 with weight}.

\textbf{Proof of \eqref{eqn:thm main bi wave:LinfL2}.} This is slightly easier than the previous estimate \eqref{eqn:thm main bi wave:LinfL2 with weight} as we no longer have to deal with the temporal weight $(\mu + |\p_t|)^a$. To bound the high-low interactions, we observe that
    \begin{align*}
        \mu^{\ell -1 } \| P_{\mu}(\overline{ \varphi} \psi_{\ll \mu }) \|_{L^\infty_t L^2_x} \lesa \mu^{\ell -1 } \sum_{\lambda \ll \mu} \lambda^{\frac{d}{2}} \| \varphi_{\approx \mu} \|_{L^\infty_t L^2_x} \| \psi_\lambda \|_{L^\infty_t L^2_x} &\lesa \mu^{\ell - s -1} \sum_{\lambda \ll \mu} \lambda^{\frac{d}{2}-s} \| \varphi_{\approx \mu} \|_{S^{s,a,0}} \| \psi \|_{S^{s,a,0}}
   \end{align*}
and hence provided that
    $$ s+1\g  \ell, \qquad 2s\g \ell + \frac{d-2}{2}, \qquad (s, \ell) \not = (\tfrac{d}{2}, \tfrac{d}{2} + 1),$$
we obtain
   \begin{align*}
     \Big( \sum_{\mu \gg 1} \mu^{2(\ell -1)}  \| P_{\mu}(\overline{ \varphi} \psi_{\ll \mu }) \|_{L^\infty_t L^2_x}^2 \Big)^\frac{1}{2}
                &\lesa \Big( \sum_{\mu \in 2^\NN} \mu^{2s} \| \varphi_{\approx \mu} \|_{L^\infty_t L^2_x}^2\Big)^\frac{1}{2} \| \psi \|_{S^{s,a,b}} \lesa \| \varphi \|_{S^{s,a,b}} \| \psi \|_{S^{s,a,b}}.
   \end{align*}
Similarly, to deal with the high-high interactions, we note that for any $\lambda_1 \approx \lambda_2$ since $\ell + \frac{d}{2} -1 \g 0$ an application of Bernstein's inequality gives
        \begin{align*}
             \Big( \sum_{\mu \lesa \lambda_1} \mu^{2(\ell - 1)} \| P_{\mu} ( \overline{\varphi}_{\lambda_1} \psi_{\lambda_2} ) \|_{L^\infty_t L^2_x}^2\Big)^\frac{1}{2}
                        &\lesa \Big( \sum_{\mu \lesa \lambda_1} \mu^{2(\ell +\frac{d}{2}- 1)} \| \varphi_{\lambda_1}\|_{L^\infty_t L^2_x}^2 \| \psi_{\lambda_2} \|_{L^\infty_t L^2_x}^2\Big)^\frac{1}{2} \\
                        &\lesa \lambda_1^{\ell + \frac{d-2}{2} - 2s} \| \varphi_{\lambda_1} \|_{S^{s,a,b}_{\lambda_1}} \| \psi_{\lambda_2} \|_{S^{s,a,b}_{\lambda_2}}
        \end{align*}
and therefore
        \begin{align*}
          \Big( \sum_{\mu \in 2^\NN} \mu^{2(\ell - 1)} \Big\| \sum_{\lambda_1 \approx \lambda_2 \gtrsim \mu} P_{\mu} ( \overline{\varphi}_{\lambda_1} \psi_{\lambda_2} ) \Big\|_{L^\infty_t L^2_x}^2\Big)^\frac{1}{2}
                    &\lesa \sum_{\lambda_1 \approx \lambda_2} \lambda_1^{\ell + \frac{d-2}{2}-2s}  \| \varphi_{\lambda_1} \|_{S^{s,a,b}_{\lambda_1}} \| \psi_{\lambda_2} \|_{S^{s,a,b}_{\lambda_2}} \lesa \| \varphi \|_{S^{s,a,b}} \| \psi \|_{S^{s,a,b}}
        \end{align*}
where we used the assumption
        $$  2s - \ell - \tfrac{d-2}{2} \g 0. $$
In view of the frequency decomposition \eqref{eqn:thm main bi wave:freq decomp}, together with the invariance of the left hand side of \eqref{eqn:thm main bi wave:LinfL2} under complex conjugation, this completes the proof of the $L^\infty_t L^2_x$ bound \eqref{eqn:thm main bi wave:LinfL2}.

\textbf{Proof of \eqref{eqn:thm main bi wave:L2}.} We now turn to the proof of the $L^2_{t,x}$ bound \eqref{eqn:thm main bi wave:L2}, and again decompose the product into the standard frequency trichotomy as in \eqref{eqn:thm main bi wave:freq decomp}. For the high-low interaction terms, we note that
    $$ \mu^{\beta} \| P_{\mu} (\overline{\varphi} \psi_{\ll \mu}) \|_{L^2_{t,x}} \lesa \mu^{\beta} \| \varphi_{\approx \mu} \|_{L^\infty_t L^2_x} \| \psi_{\ll \mu} \|_{L^2_t L^\infty_x} \lesa \mu^{ \beta} \sum_{1\les \lambda \ll \mu} \lambda^{\frac{d-2}{2} + a - s} \| \varphi_{\approx \mu} \|_{L^\infty_t L^2_x} \| (\lambda + |\p_t|)^a \psi_{\lambda} \|_{L^2_t L^{2^*}_x}$$
and hence, provided that
    $$ \beta \les s, \qquad 2s - \beta - \frac{d-2}{2} \g a, \qquad (s, \beta) \not = (\tfrac{d-2}{2} + a, \tfrac{d-2}{2}+a),$$
summing up gives
    \begin{align*}
      \Big( \sum_{\mu  \gg 1} \mu^{2\beta} \| P_{\mu}( \overline{\varphi} \psi_{\ll \mu}) \|_{L^2_{t,x}}^2\Big)^\frac{1}{2}
                    &\lesa \Big( \sum_{\mu \gg 1} \mu^{2s} \| \varphi_{\approx \mu}\|_{L^\infty_t L^2_x}^2\Big)^\frac{1}{2} \| \psi \|_{S^{s,a,b}} \lesa \| \varphi \|_{S^{s,a,b}} \| \psi \|_{S^{s,a,b}}.
    \end{align*}
Similarly, to bound the high-high interaction terms, we have for any $\lambda_1 \approx \lambda_2$
    $$ \lambda_1^{\beta} \| \overline{\varphi}_{\lambda_1} \psi_{\lambda_2} \|_{L^2_{t,x}} \lesa \lambda_1^\beta \| \varphi_{\lambda_1} \|_{ L^2_t L^{2^*}_x} \|  \psi_{\lambda_2} \|_{L^\infty_t L^d_x} \lesa \lambda_1^{\beta + a + \frac{d-2}{2} - 2s} \| \varphi_{\lambda_1} \|_{S^{s,a,0}_{\lambda_1}} \| \psi_{\lambda_2} \|_{S^{s,a,0}_{\lambda_2}}. $$
Therefore, noting that since $\beta \g 0$ we have
    $$ \Big( \sum_{\mu \lesa \lambda_1} \mu^{2\beta} \| P_{\mu} F \|_{L^2_{t,x}}^2\Big)^\frac{1}{2} \lesa \lambda_1^\beta \Big( \sum_{\mu \in 2^\NN} \| P_\mu F \|_{L^2_{t,x}}^2\Big)^{\frac{1}{2}} \lesa \lambda_1^{\beta} \|F \|_{L^2_{t,x}}$$
we conclude that
    \begin{align*}
      \Big( \sum_{\mu \in 2^\NN} \mu^{2\beta} \Big\| \sum_{\lambda_1 \approx \lambda_2 \gtrsim \mu} P_{\mu}(\overline{\varphi}_{\lambda_1} \psi_{\lambda_2}) \Big\|_{L^2_{t,x}}^2\Big)^\frac{1}{2}
        &\lesa \sum_{\lambda_1 \approx \lambda_2} \Big( \sum_{\mu \lesa \lambda_1} \mu^{2\beta} \| P_{\mu}(\overline{\varphi}_{\lambda_1} \psi_{\lambda_1}) \|_{L^2_{t,x}}^2\Big)^\frac{1}{2} \\
        &\lesa \sum_{\lambda_1 \approx \lambda_2} \lambda_1^{\beta + a + \frac{d-2}{2} - 2s} \| \varphi_{\lambda_1} \|_{S^{s,a,b}_{\lambda_1}} \| \psi_{\lambda_2} \|_{S^{s,a,b}_{\lambda_2}} \lesa \| \varphi \|_{S^{s,a,b}} \| \psi \|_{S^{s,a,b}}
    \end{align*}
provided that
    $$ 2s - \beta - \frac{d-2}{2} \g a. $$
This completes the proof of \eqref{eqn:thm main bi wave:L2}.

\textbf{Proof of \eqref{eqn:thm main bi wave:low freq}.} To prove the remaining estimate \eqref{eqn:thm main bi wave:low freq}, we can simply use Bernstein and H\"older inequalities and the endpoint Strichartz estimate with loss \eqref{eqn:stri with loss}
    \[
\|P_{\les 2^{16}}(|\nabla|(\overline{\varphi} \psi))\|_{L^1_tL^2_x}\lesa \|\overline{\varphi} \psi\|_{L^1_tL^{\frac{d}{d-2}}_x}\lesa \|\varphi\|_{L^2_tL^{2^*}_x}\|\psi\|_{L^2_tL^{2^*}_x}\lesa\| \varphi \|_{S^{a,a, b}} \| \psi\|_{S^{a,a, b}},
\]
since $s\geq a$.
\end{proof}

As in the Schr\"odinger case, we additionally provide a local version of the bilinear estimate which contains a dispersive norm.

\begin{corollary}\label{cor:loc bi wave}

Let $d\g 4$, $s , \ell, \beta \g 0$, and $0\les a \les 1$ satisfy
        $$ \beta < \min\big\{s, 2s - \tfrac{d-2}{2} -a\big\}, \qquad  2a < 2s - \ell - \tfrac{d-2}{2}, \qquad a < s-\ell.$$
There exists $0< \theta < 1$ and $C>0$ such that for any interval $0\in I \subset \RR$, if $\varphi, \psi \in S^{s, a, 0}(I)$,
then
        $$ \|\mc{J}_0[ \nabla (\overline{\varphi}\psi)]\|_{W^{\ell, a, \beta}(I)} \les  C \Big(\| \varphi \|_{S^{s,a, 0}(I)} \| \psi\|_{S^{s,a, 0}(I)}\Big)^{1-\theta} \Big( \| \varphi \|_{L^2_t L^{2^*}_x(I\times \RR^d)} \| \psi \|_{L^2_t L^{2^*}_x(I\times \RR^d)}\Big)^{\theta}.$$
\end{corollary}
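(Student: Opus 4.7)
The approach is to deduce the estimate from Theorem \ref{thm:main bilinear wave} by an interpolation argument between two endpoint bilinear estimates, exploiting the slack provided by the strict inequalities in the hypotheses.

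The first endpoint bound is obtained by applying Theorem \ref{thm:main bilinear wave} at slightly perturbed parameters. Since the three inequalities in the hypotheses of the corollary are strict, for $\delta>0$ sufficiently small the triple $(\ell+\delta,a,\beta+\delta)$ still satisfies the (non-strict) hypotheses of Theorem \ref{thm:main bilinear wave} with $b=0$, and the exceptional points of that theorem are automatically excluded by the strict inequalities here (for instance $s-\ell>a$ forces $s-\ell\ne -1$, so $(s,\ell)\ne(\tfrac{d}{2},\tfrac{d+2}{2})$, etc.). This yields the \textbf{strong bound}
\[
\|\mc{J}_0[\nabla(\overline{\varphi}\psi)]\|_{W^{\ell+\delta,a,\beta+\delta}(I)}\lesa \|\varphi\|_{S^{s,a,0}(I)}\|\psi\|_{S^{s,a,0}(I)}.
\]
The second endpoint is a \textbf{weak dispersive bound} of the form
\[
\|\mc{J}_0[\nabla(\overline{\varphi}\psi)]\|_{W^{\ell-M,a,\beta-M}(I)}\lesa \|\varphi\|_{L^2_tL^{2^*}_x(I\times\RR^d)}\|\psi\|_{L^2_tL^{2^*}_x(I\times\RR^d)}
\]
for any fixed $M>0$ sufficiently large. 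For each dyadic piece, the first two components of the $W$-norm (both of $L^\infty_tL^2_x$ type) follow from the energy inequality in Lemma \ref{lem:energy ineq wave} combined with H\"older ($\|\overline{\varphi}\psi\|_{L^1_tL^{d/(d-2)}_x}\lesa \|\varphi\|_{L^2_tL^{2^*}_x}\|\psi\|_{L^2_tL^{2^*}_x}$) and Bernstein in space, absorbing a loss of $\mu^{(d-2)/2}$ into the weight $\mu^{-M}$. For the $L^2_{t,x}$ component, Bernstein in time on the low-temporal-frequency piece $P^{(t)}_{\les \mu^2}$ allows one to trade the $L^1_t$ integrability for $L^2_t$ at the cost of a further factor of $\mu$, again absorbed in $\mu^{-M}$; the complementary high-temporal-frequency part is handled by a $\p_t^{-1}$ device modeled on the proof of Lemma \ref{lem:energy ineq}.

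Setting $\theta=\delta/(\delta+M)\in(0,1)$, one checks simultaneously $\ell=(1-\theta)(\ell+\delta)+\theta(\ell-M)$ and $\beta=(1-\theta)(\beta+\delta)+\theta(\beta-M)$. Since the $W^{\ell,a,\beta}$-norm is a weighted $\ell^2$ sum of dyadic quantities with weights that are power functions of $\mu$, componentwise H\"older's inequality in $\ell^2$ with conjugate exponents $1/(1-\theta),\ 1/\theta$ yields the interpolation inequality
\[
\|V\|_{W^{\ell,a,\beta}(I)}\les \|V\|_{W^{\ell+\delta,a,\beta+\delta}(I)}^{1-\theta}\|V\|_{W^{\ell-M,a,\beta-M}(I)}^{\theta},
\]
applied to $V=\mc{J}_0[\nabla(\overline{\varphi}\psi)]$. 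Combining this with the strong and weak endpoint bounds completes the proof.

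The most delicate step is the $L^2_{t,x}$ piece of the weak bound, because a product of two $L^2_tL^{2^*}_x$ functions only lies naturally in $L^1_tL^{d/(d-2)}_x$, not in $L^2_t$; the temporal Bernstein reduction and the freedom to take $M$ arbitrarily large are what make this work, and in particular produce a constant $C$ independent of the interval $I$.
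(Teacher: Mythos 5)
Your overall scheme---a strong bound obtained from Theorem \ref{thm:main bilinear wave} with slightly shifted exponents (using the strict inequalities) interpolated against a lossy bound carrying only Strichartz norms---is close in spirit to the paper's proof, which interpolates at the dyadic level between the gain estimate \eqref{eqn:cor loc bi wave:gain} and the lossy estimate \eqref{eqn:cor loc bi wave:stri}. Your strong endpoint and the H\"older/$\ell^2$ interpolation of the $W$-norms are fine, up to a small point: the exceptional points of Theorem \ref{thm:main bilinear wave} are \emph{not} excluded automatically by your hypotheses at the shifted parameters (e.g.\ $(s,\ell+\delta)=(\tfrac{d-2}{2}+a,\tfrac{d-2}{2})$ with $b=0$ is compatible with the strict inequalities); this is harmless because you may choose $\delta$ generically in a small interval.

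The genuine gap is the weak endpoint. The third component of $\|\mc{J}_0[G]\|_{W^{\ell-M,a,\beta-M}_\mu}$ is exactly $\mu^{\beta-M-1}\|G_\mu\|_{L^2_{t,x}}$, because $(i\p_t+|\nabla|)\mc{J}_0[G]=G$; no $\p_t^{-1}$ device changes this, since the device in Lemma \ref{lem:energy ineq} only improves $L^\infty_tL^2_x$ bounds of the Duhamel integral and itself requires $L^\infty_tL^2_x$ (or $L^2_{t,x}$) control of the source, not $L^1_t$-type control. From $\varphi,\psi\in L^2_tL^{2^*}_x$ alone you only obtain $G_\mu\in L^1_tL^2_x$ after H\"older and Bernstein, and the high temporal frequency part of an $L^1_t$ function cannot be placed in $L^2_t$ (temporal Bernstein goes the wrong way there). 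Concretely, taking $\varphi=\psi=h(t)g(x)$ with $g$ Schwartz and frequency localized and $h$ a tall thin spike gives $\|P_\mu |\nabla|(\overline{\varphi}\psi)\|_{L^2_{t,x}}\big/\big(\|\varphi\|_{L^2_tL^{2^*}_x}\|\psi\|_{L^2_tL^{2^*}_x}\big)\approx \|h\|_{L^4_t}^2/\|h\|_{L^2_t}^2$, which is unbounded at fixed $\mu\approx 1$, so no factor $\mu^{-M}$ rescues the claimed weak bound; it is false as stated, and the interpolation cannot be closed with pure Strichartz norms at that endpoint. The repair is exactly the paper's route: estimate $\|G_\lambda\|_{L^2_{t,x}}\les\|G_\lambda\|_{L^1_tL^2_x}^{1/2}\|G_\lambda\|_{L^\infty_tL^2_x}^{1/2}$ and bound $\|G_\lambda\|_{L^\infty_tL^2_x}$ by Bernstein and $\|\varphi\|_{L^\infty_tL^2_x}\|\psi\|_{L^\infty_tL^2_x}$, which forces half a power of the $S^{s,a,0}$ norms into the lossy estimate \eqref{eqn:cor loc bi wave:stri}; the geometric-mean/interpolation step then still produces the corollary, only with a different admissible $\theta\in(0,1)$.
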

\begin{proof}
Let $\lambda_1, \lambda_2 \in 2^\NN$. It suffices to show that there exists $\delta, N>0$ such that
        \begin{align}
            \| \mc{J}_0[|\nabla|(\overline{\psi_{\lambda_1}} \varphi_{\lambda_2})] \|_{W^{\ell, a, \beta}} &\lesa (\max\{\lambda_1, \lambda_2\})^{-\delta} \| \psi \|_{S^{s,a,0}} \| \varphi \|_{S^{s,a,0}}\label{eqn:cor loc bi wave:gain}
        \end{align}
together with an estimate with a derivative loss, but the Strichartz norm on the righthand side
        \begin{equation}\label{eqn:cor loc bi wave:stri}
            \begin{split}
            \| \mc{J}_0[|\nabla|&(\overline{\psi_{\lambda_1}} \varphi_{\lambda_2})] \|_{W^{\ell, a, \beta}(I)} \\
            &\lesa (\max\{\lambda_1, \lambda_2\})^{N} \Big(\| \psi \|_{L^2_t L^{2^*}_x(I\times \RR^d)} \| \varphi \|_{L^2_t L^{2^*}_x(I\times \RR^d)} \| \psi \|_{S^{s, a, 0}(I)} \| \varphi \|_{S^{s,a,0}(I)}\Big)^\frac{1}{2}.
            \end{split}
        \end{equation}
We start with the proof of \eqref{eqn:cor loc bi wave:gain}. Choose $s'<s$ such that
        $$ \beta < \min\big\{s', 2s' - \tfrac{d-2}{2} -a\big\}, \qquad  2a < 2s' - \ell - \tfrac{d-2}{2}, \qquad a < s'-\ell.$$
An application of Theorem \ref{thm:main bilinear wave} implies that
    $$ \| \mc{J}_0[ |\nabla|(\overline{\psi_{\lambda_1}} \varphi_{\lambda_2}) \|_{W^{\ell,a, \beta}} \lesa \| \psi_{\lambda_1} \|_{S^{s', a, 0}} \| \varphi_{\lambda_2} \|_{S^{s', a,0}} \lesa (\lambda_1 \lambda_2)^{s'-s} \| \psi \|_{S^{s,a,0}} \| \varphi \|_{S^{s,a,0}}$$
and hence \eqref{eqn:cor loc bi wave:gain} follows.

We now turn to the proof of \eqref{eqn:cor loc bi wave:stri}. An application of the standard energy inequality for the wave equation together with the convexity of $L^p_t$ and Bernstein's inequality implies that
    \begin{align*}
      \| \mc{J}_0[G_\lambda] \|_{W^{\ell, a, \beta}_\lambda}
      &\lesa \lambda^{\ell + a} \| \mc{J}_0[G_\lambda] \|_{L^\infty_t L^2_x} + \lambda^{\beta - 1} \|  G_\lambda \|_{L^2_{t,x}} \\
      &\lesa \lambda^{\ell + a + \beta + \frac{d}{4}} \Big( \| G_\lambda \|_{L^1_t L^2_x} + \| G_{\lambda} \|_{L^1_t L^2_x}^\frac{1}{2} \| G_\lambda \|_{L^\infty_t L^1_x}^\frac{1}{2}\Big)
    \end{align*}
Therefore there exists $N>0$ such that
    \begin{align*}
       &\| \mc{J}_0[|\nabla|(\overline{\psi_{\lambda_1}} \varphi_{\lambda_2})] \|_{W^{\ell, a, \beta}(I)}
       \les  \| \mc{J}_0[\ind_{I} |\nabla|(\overline{\psi_{\lambda_1}} \varphi_{\lambda_2})] \|_{W^{\ell, a, \beta}}  \\
       \lesa{}& (\max\{\lambda_1, \lambda_2\})^N \Big(\| \psi_{\lambda_1} \varphi_{\lambda_2} \|_{L^1_t L^2_x(I\times \RR^d)} +\| \psi_{\lambda_1} \varphi_{\lambda_2} \|_{L^1_t L^2_x(I\times \RR^d)}^\frac{1}{2} \| \psi_{\lambda_1} \varphi_{\lambda_2} \|_{L^\infty_t L^1_x(I\times \RR^d)}^\frac{1}{2} \Big)\\
            \lesa{}& (\max\{\lambda_1, \lambda_2\})^{N}  \Big(\| \psi \|_{L^2_t L^{2^*}_x(I\times \RR^d)} \| \varphi \|_{L^2_t L^{2^*}_x(I\times \RR^d)} \| \psi \|_{S^{s, a, 0}(I)} \| \varphi \|_{S^{s,a,0}(I)}\Big)^\frac{1}{2},
    \end{align*}
    and the proof is complete.
\end{proof}

\section{Simplified small data global and large data local theory}\label{sec:simple}

As a warm up to the proof of the main results contained in Theorem \ref{thm:lwp-ill} and Theorem \ref{thm:small data gwp}, we show how the bilinear estimates in the previous two sections can be used to prove a simplified small data global well-posedness and scattering result and a large data local well-posedness result in the non-endpoint case.

 Recall that $\mc{I}_{0}[F]$ denotes the solution to the inhomogeneous Schr\"odinger equation
        $$ (i\p_t + \Delta) \psi = F, \qquad \psi(0) = 0 $$
and similarly, $\mc{J}_0[G]$ denotes the solution to the inhomogeneous wave equation
        $$ (i\p_t + |\nabla|)\phi = G, \qquad \phi(0) = 0. $$
Given data $(f, g) \in H^s \times H^\ell$, define the functional
    $$ \Phi(f, g; \psi) := e^{it\Delta}f+  \mc{I}_{0}\Big[\Re(e^{it|\nabla|}g)\psi\Big]- \mc{I}_{0}\Big[ \Re\Big(\mc{J}_{0}(|\nabla||\psi|^2)\Big) \psi\Big].$$
Suppose that $(s, \ell)$ lies in the region \eqref{eqn:cond on s l}, and define the parameters $(a,b)$ as in \eqref{eqn:choice of a,b}. A computation shows that the energy inequality in Lemma \ref{lem:energy ineq} together with the bilinear estimates in Theorem \ref{thm:main bilinear schro} and Theorem \ref{thm:main bilinear wave} implies that we have the bound
    \begin{align*}
         \| \Phi(f,g; \psi) \|_{S^{s, a, b}}
            &\lesa \| f \|_{H^s} + \big\| \Re(e^{it|\nabla|}g)\psi \big\|_{N^{s,a,b}}  + \big\| \Re\big(\mc{J}_{0}(|\nabla||\psi|^2)\big)\psi \big\|_{N^{s,a,b}}\\
            &\lesa \| f \|_{H^s} + \| g \|_{W^{\ell, a, s-\frac{1}{2}}} \| \psi \|_{S^{s,a,b}}  + \big\| \mc{J}_{0}(|\nabla||\psi|^2)\big\|_{W^{\ell, a, s-\frac{1}{2}}} \| \psi \|_{S^{s, a, b}}  \\
            &\lesa \| f \|_{H^s} + \| g \|_{H^\ell} \|\psi \|_{S^{s,a,b}}  +  \| \psi \|_{S^{s, a, b}}^3.
    \end{align*}
Therefore $\Phi: S^{s,a,b} \to S^{s,a,b}$. Repeating this argument with differences, shows that provided $\| f \|_{H^s} + \| g \|_{H^\ell}$ is sufficiently small, there exists a fixed point $u \in \{ \psi \in S^{s, a, b} \mid \| \psi \|_{S^{s,a,b}}\lesa \| f \|_{H^s} \}$ to $\Phi$. Defining
        $$ V = e^{it|\nabla|} g - \mc{J}_0(|\nabla| |u|^2)$$
and again applying Theorem \ref{thm:main bilinear wave}, we then obtain a solution $(u, V) \in C(\RR, H^s\times H^\ell)$ to the Zakharov system \eqref{eq:Zakharov 1st order}. The scattering property follows from Lemma \ref{lem:scattering} and a similar analogue for the wave part.

Note that the above argument requires that we have the smallness condition $\| f\|_{H^s} + \| g \|_{H^\ell} \ll 1$. Our later arguments will significantly improve this to just requiring $g\in H^{\frac{d-4}{2}}$ and $\| f\|_{H^{\frac{d-3}{2}}} \ll_g 1$. In other words we only require smallness of $f$ in the \emph{endpoint} Sobolev space. In addition, we also obtain a stronger uniqueness claim, as well as persistence of regularity. \\

Let us now sketch a simplified, large data local well-posedness result in the non-endpoint case $s>\frac{d-3}{2}$. Suppose that $(s, \ell)$ satisfies \eqref{eqn:cond on s l} with $s>\frac{d-3}{2}$, and take $(a, b)$ as in \eqref{eqn:choice of a,b}. Define $\tilde{\ell} = \min\{ s - \frac{1}{2}, \ell\}$ and take the map $\Phi$ as above. The non-endpoint condition  $s>\frac{d-3}{2}$ is due to the use of Corollary \ref{cor:loc bi wave}, while the choice of $\tilde{\ell}$ is made to ensure that we can construct a fixed point for $\Phi$ in $S^{s, a,0}(I)$ via Corollary \ref{cor:local schro bi}. Once we have a fixed point $u \in S^{s,a,0}(I)$, we use an additional argument to upgrade this to $u\in S^{s,a,b}(I)$, which is needed to get the correct regularity for the wave component $V$.

Fix $(f,g)\in H^s \times H^\ell$. Choose an interval $0\in I \subset \RR$ such that
            $$ \| e^{it\Delta} f \|_{L^2_t L^{2^*}_x(I\times \RR^d)} + \| e^{it|\nabla|} g \|_{W^{\tilde{\ell}, a, s-\frac{1}{2}}(I) + L^2_t W^{s,d}_x(I\times \RR^d)} < \epsilon, $$
where $\epsilon > 0$ is fixed later (depending on $f$, $g$, and the absolute constants in the above bilinear estimates). Define the subset $\Omega \subset S^{s,a,0}(I)$ as
        $$ \Omega = \big\{ \psi \in S^{s,a,0}(I) \,\, \big|\,\, \|\psi\|_{L^2_t L^{2^*}_x(I\times \RR^d)}\lesa ( 1 + \| g \|_{L^2_x}) \epsilon, \,\, \| \psi \|_{S^{s,a,0}(I)} \lesa \| f \|_{H^s} \big\}. $$
An application of Corollary \ref{cor:local schro bi} and Corollary \ref{cor:loc bi wave} gives $\theta > 0$ such that for every $\psi \in \Omega$ we have
    \begin{align*}
      \| \Phi(f,g;\psi)\|_{S^{s,a,0}(I)} &\lesa \| f \|_{H^s} + \Big(\| e^{it|\nabla|} g \|_{W^{\tilde{\ell}, a, s-\frac{1}{2}}(I) + L^2_t W^{s, d}_x(I)} + \big\| \mc{J}_{0}(|\nabla||\psi|^2)\big\|_{W^{\tilde{\ell}, a, s-\frac{1}{2}}(I)} \Big) \| \psi \|_{S^{s, a, 0}(I)}\\
                        &\lesa   \| f \|_{H^s} + \epsilon  \| \psi \|_{S^{s, a, 0}(I)}  + \epsilon^{2\theta} ( 1 + \| g \|_{L^2_x})^{2\theta} \| \psi \|_{S^{s,a, 0}(I)}^{3 - 2 \theta}.
    \end{align*}
On the other hand, in view of the endpoint Strichartz estimate we have
    \begin{align*}
        \| \Phi(f, g; \psi) \|_{L^2_t L^{2^*}_x(I\times \RR^d)} &\lesa \| e^{it\Delta} f \|_{L^2_t L^{2^*}_x(I\times \RR^d)}  + \| \Re(e^{it|\nabla|}g) \psi \|_{L^2_t L^{2_*}_x(I\times \RR^d)} + \| \Re( \mc{J}_0[|\nabla| |\psi|^2]) \psi\|_{L^2_t L^{2_*}_x} \\
        &\lesa \epsilon + \| g\|_{L^2_x} \| \psi \|_{L^2_t L^{2^*}_x(I\times \RR^d)} + \| \mc{J}_0[|\nabla| |\psi|^2] \|_{L^\infty_t L^2_x(I\times \RR^d)} \| \psi \|_{L^2_t L^{2^*}_x(I\times \RR^d)}\\
        &\lesa \epsilon + \epsilon \| g \|_{L^2_x} + \epsilon^{1 + 2\theta} ( 1 + \| g \|_{L^2})^{1 + 2\theta} \| \psi \|_{S^{s,a,0}(I)}^{2 - 2 \theta}.
    \end{align*}
Consequently, choosing $\epsilon>0$ sufficiently small, we see that $\Phi: \Omega \to \Omega$. A similar argument shows that $\Phi$ is a contraction on $\Omega$ (with respect to the norm $\| \cdot \|_{S^{s,a,0}(I)}$), and hence there exists a fixed point $u\in \Omega \subset S^{s,a,0}(I)$ for $\Phi$.

We now upgrade the (far paraboloid) regularity to $u \in S^{s,a, b}(I)$. Note that this is immediate if $s>\ell$ since $b=0$ in this case. If $s\les \ell$, then an application of Theorem \ref{thm:main bilinear schro} together with Lemma \ref{lem:energy ineq} gives
   \begin{align*}
     \| u \|_{S^{s,a, b}(I)} &\lesa \| f \|_{H^s} + \| \Re(e^{it|\nabla|} g ) u \|_{N^{s,a,b}(I)} +  \| \Re(\mc{J}_0[|\nabla| |u|^2]) u \|_{N^{s,a,b}(I)}  \\
                             &\lesa \| f \|_{H^s} + \| g \|_{H^\ell} \| u \|_{S^{s,a,0}(I)} + \| \mc{J}_0[|\nabla| |u|^2] \|_{W^{s- \frac{1}{2}(1-b), a, s-\frac{1}{2}}(I)} \| u \|_{S^{s,a, 0}(I)}.
   \end{align*}
To check conditions of Theorem \ref{thm:main bilinear schro}, it is helpful to note that $a=0$ when $s\les \ell$. Theorem \ref{thm:main bilinear wave} implies
        $$ \| \mc{J}_0[|\nabla| |u|^2] \|_{W^{s- \frac{1}{2}(1-b), a, s-\frac{1}{2}}(I)} \lesa \| u \|_{S^{s,a,0}(I)}^2$$
and hence we conclude that
        $$ \| u \|_{S^{s, a, b}(I)} \lesa \| f \|_{H^s} + \| g \|_{H^\ell} \| u \|_{S^{s,a,0}(I)} + \| u \|_{S^{s,a,0}(I)}^3. $$
Consequently, if  $u\in S^{s,a,0}(I)$ is a solution to $\Phi(f,g;u) = u$, then we have the improved regularity $u\in S^{s,a,b}(I)$. As above, we now define
   $$ V = e^{it|\nabla|} g - \mc{J}_0(|\nabla| |u|^2).$$
Since $u\in S^{s,a,b}(I)$, an application of Theorem \ref{thm:main bilinear wave} then gives $V \in W^{\ell, a, s-\frac{1}{2}}(I)$. In particular, we have a local solution $(u, V) \in C(I, H^s\times H^\ell)$ to the Zakharov system \eqref{eq:Zakharov 1st order}.

\section{Local bilinear estimates in the endpoint case}\label{sec:local-bil}

In this section, we deal with the endpoint case $(s,\ell) = (\frac{d-3}{2}, \frac{d-4}{2})$ and establish bilinear estimates which include both dispersive norms and a slightly weaker frequency summation on the right hand side. Define the norms
        $$ \| u \|_{S^{s,0,0}_w} = \| u \|_{L^\infty_t H^s_x} + \| u \|_{L^2_t W^{s,2^*}_x} + \| (i\p_t + \Delta) u \|_{L^2_t H^{s-1}_x} $$
and 
        $$ \| v \|_{W^{\ell, 0, \beta}_w} = \| v \|_{L^\infty_t H^\ell_x} + \| (i\p_t + |\nabla|) v \|_{L^2_t H^{\beta-1}_x}. $$
The notation here is chosen to match that introduced earlier. In particular we have $\| u \|_{S^{s,0,0}_w} \lesa \| u \|_{S^{s,0,0}}$ and $\|v \|_{W^{\ell, 0, \beta}_w} \lesa \| v \|_{W^{\ell, 0, \beta}}$.
We start with an improvement of Theorem \ref{thm:main bilinear schro} in the endpoint case.

\begin{proposition}\label{prop:schro est for endpoint}
Let $d\g 4$ and $(s, \ell) =(\frac{d-3}{2}, \frac{d-4}{2})$. Let $I\subset \RR$ be an interval. Then
        \begin{equation}\label{eqn:prop schro est end:main}
        \| v u \|_{N^{s,0,0}(I)} \lesa \|v \|_{W^{\ell, 0, \ell}_w(I)} \| u \|_{L^2_t W^{s, 2^*}_x(I\times \RR^d)}^{\frac{1}{2}} \| u \|_{S^{s,0,0}_w(I)}^\frac{1}{2}.
        \end{equation}
\end{proposition}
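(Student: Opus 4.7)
The plan is to follow the case analysis in the proof of Theorem~\ref{thm:main bilinear schro} at the endpoint $(s,\ell) = (\tfrac{d-3}{2},\tfrac{d-4}{2})$ with $a = b = 0$, identify the single frequency interaction where the full $\|u\|_{S^{s,0,0}}$ norm is essential (rather than just the endpoint Strichartz norm), and in that interaction combine two competing bounds via a pointwise geometric mean to produce the $\|u\|_{L^2_tW^{s,2^*}_x}^{1/2}\|u\|_{S^{s,0,0}}^{1/2}$ structure. First, Bernstein in time gives $\lambda_0^{s-2}\|P^{(t)}_{\les(\lambda_0/2^8)^2}F_{\lambda_0}\|_{L^\infty_t L^2_x}\lesa\lambda_0^{s-1}\|F_{\lambda_0}\|_{L^2_{t,x}}$, so the $L^\infty_t L^2_x$ low-temporal-frequency component of $\|F\|_{N^{s,0,0}}$ is dominated by its $L^2_{t,x}$ component, and it suffices to estimate the low-modulation $L^2_t L^{2_*}_x$ piece and the $L^2_{t,x}$ piece. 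Decompose $P_{\lambda_0}(vu) = \sum_{\lambda_1,\lambda_2} P_{\lambda_0}(v_{\lambda_1}u_{\lambda_2})$ into the standard high-low ($\lambda_0\approx\lambda_1\gg\lambda_2$), low-high ($\lambda_0\approx\lambda_2\gg\lambda_1$), and balanced/high-high ($\lambda_1\approx\lambda_2\gtrsim\lambda_0$) frequency interactions.

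Inspection of the endpoint computation in the proof of Theorem~\ref{thm:main bilinear schro} (cf.\ \eqref{eqn:thm main bi schro:left L2}, \eqref{eqn:thm main bi schro:stri}) shows that in every interaction for the $L^2_{t,x}$ piece, and in the low-high and balanced/high-high interactions for the low-modulation $L^2_t L^{2_*}_x$ piece, an appropriate Hölder pair together with Sobolev/Bernstein inequalities yields a bound of the form $\lesa\|v\|_{W^{\ell,0,s-1/2}(I)}\|u\|_{L^2_t W^{s,2^*}_x(I\times\RR^d)}$ involving only the endpoint Strichartz norm of $u$, with $\ell^2$-summable dyadic ratio. Since $\|u\|_{L^2_t W^{s,2^*}_x}\les\|u\|_{L^2_t W^{s,2^*}_x}^{1/2}\|u\|_{S^{s,0,0}}^{1/2}$, these estimates already imply the claim for all those interactions.

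The remaining, critical case is the high-low interaction $\lambda_0\approx\lambda_1\gg\lambda_2$ of the low-modulation $L^2_t L^{2_*}_x$ piece. Here the spacetime Fourier support constraint forces $u_{\lambda_2}$ into the high-modulation regime $C_{\gtrsim\lambda_0^2}$ (regardless of the temporal frequency of $v_{\lambda_1}$). Apply the Hölder pair $L^\infty_t L^2_x\cdot L^2_t L^d_x\hookrightarrow L^2_t L^{2_*}_x$ together with Bernstein on $u_{\lambda_2}$ to establish the two complementary bounds
\begin{align*}
\lambda_0^s\|C_{\ll\lambda_0^2} P_{\lambda_0}(v_{\lambda_1}u_{\lambda_2})\|_{L^2_t L^{2_*}_x} &\lesa \lambda_0^{1/2}\lambda_2^{-1/2}\|v_{\lambda_1}\|_{L^\infty_t H^\ell_x}\|u_{\lambda_2}\|_{L^2_t W^{s,2^*}_x},\\
\lambda_0^s\|C_{\ll\lambda_0^2} P_{\lambda_0}(v_{\lambda_1}u_{\lambda_2})\|_{L^2_t L^{2_*}_x} &\lesa \lambda_0^{-3/2}\lambda_2^{3/2}\|v_{\lambda_1}\|_{L^\infty_t H^\ell_x}\|u_{\lambda_2}\|_{S^{s,0,0}_{\lambda_2}},
\end{align*}
the second obtained by inverting $(i\p_t+\Delta)$ at modulation $\gtrsim\lambda_0^2$ to gain the factor $\lambda_0^{-2}$ (this is the endpoint case of \eqref{eqn:thm main bi schro:stri}). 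Their geometric mean
\[
\lesa\lambda_0^{-1/2}\lambda_2^{1/2}\|v_{\lambda_1}\|_{L^\infty_t H^\ell_x}\|u_{\lambda_2}\|_{L^2_t W^{s,2^*}_x}^{1/2}\|u_{\lambda_2}\|_{S^{s,0,0}_{\lambda_2}}^{1/2}
\]
has summable ratio $(\lambda_2/\lambda_0)^{1/2}$ uniformly in $d\g 4$. Assemble via Cauchy--Schwarz at the level of $\ell^2$-sums (using $X^2\les YZ$ pointwise and $\sum YZ\les\|Y\|_{\ell^2}\|Z\|_{\ell^2}$), then pass to the restriction norm on $I$ via the standard extension procedure.

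The main obstacle is arranging the two competing bounds to share the same $v$-norm structure $\|v_{\lambda_1}\|_{L^\infty_t H^\ell_x}$, so that the geometric mean has summable frequency ratio uniformly in dimension. This forces the Hölder choice $L^\infty_t L^2_x\cdot L^2_t L^d_x$ in both bounds; the alternative pair $L^\infty_t L^{d/2}_x\cdot L^2_t L^{2^*}_x$ for the Strichartz bound, which at first appears more natural, leads to a geometric mean that grows in $\lambda_0/\lambda_2$ once $d\g 6$.
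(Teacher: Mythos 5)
Your reductions (discarding the $L^\infty_t L^2_x$ component of the $N$-norm via Bernstein in time, the low-high and balanced interactions, and the $L^2_{t,x}$ piece of the high-low interaction, all of which need only $\|v\|_{L^\infty_t H^\ell}$ and the Strichartz norm of $u$) are consistent with the paper's proof. However, your treatment of the critical high-low, low-modulation piece has a genuine gap. The claim that the output constraint forces $u_{\lambda_2}$ into modulation $\gtrsim \lambda_0^2$ \emph{regardless of the temporal frequency of $v_{\lambda_1}$} is false: the constraint $|\xi|\approx\lambda_0$, $|\tau+|\xi|^2|\ll\lambda_0^2$ pins the output temporal frequency at $|\tau|\approx\lambda_0^2$, and this is inherited by $u_{\lambda_2}$ only when $v_{\lambda_1}$ has temporal frequency $\ll\lambda_0^2$. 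If $v_{\lambda_1}$ itself carries temporal frequency $\approx\lambda_0^2$ with the resonant sign (e.g.\ $v_{\lambda_1}=e^{-i\lambda_0^2 t}w_{\lambda_1}(x)$) and $u_{\lambda_2}=e^{it\Delta}g_{\lambda_2}$ is a free wave, the product sits essentially on the paraboloid, there is no $\lambda_0^{-2}$ gain from inverting $(i\p_t+\Delta)$, and your second displayed bound, which carries only $\|v_{\lambda_1}\|_{L^\infty_t H^\ell}$ on the right, fails by a factor of order $\lambda_0^{2}/\lambda_2$. This missing sub-case is precisely why the hypothesis involves $\|v\|_{W^{\ell,0,s-\frac12}}$ and not merely $\|v\|_{L^\infty_t H^\ell}$: the paper first splits $v_{\approx\lambda}=P^{(t)}_{\ll\lambda^2}v_{\approx\lambda}+P^{(t)}_{\gtrsim\lambda^2}v_{\approx\lambda}$, applies the non-resonant identity only to the first piece, and estimates the second through the wave-modulation component $\lambda^{s-\frac32}\|(i\p_t+|\nabla|)v\|_{L^2_{t,x}}$ of the $W^{\ell,0,s-\frac12}$ norm, paired with $\|u\|_{L^\infty_t H^s}$ (second term in \eqref{eqn:prop schro est end:decomp II}). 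Your proposal never uses that component, so it cannot close as written.

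A second, structural issue concerns how the half-powers are produced. A pointwise geometric mean of your two dyadic bounds requires both bounds for the same function, but one factor is the restricted Strichartz norm $\|u\|_{L^2_t W^{s,2^*}_x(I\times\RR^d)}$ while the other involves modulation projections and the $S^{s,0,0}$ norm, hence must be applied to a global extension of $u|_I$; a near-optimal $S^{s,0,0}(I)$ extension controls its global Strichartz norm only by $\|u\|_{S^{s,0,0}(I)}$, which collapses the geometric mean back to $\|u\|_{S^{s,0,0}(I)}$ and destroys the smallness the proposition is designed to capture. The paper avoids this by proving the loss bound \eqref{eqn:prop schro est end:alpha loss} directly as a restriction estimate (sharp time cutoff, Lebesgue norms only), proving the gain bound \eqref{eqn:prop schro est end:alpha gain} globally for extensions, splitting the sum over the frequency ratio $\alpha$ at a threshold $M$, and optimizing $M$ after summation; the exponents $\frac12$ arise from that optimization, not from a dyadic interpolation. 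If you repair the high-temporal-frequency case of $v$, you should organize the conclusion the same way rather than via a pointwise geometric mean.
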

\begin{proof} 
Suppose for the moment that we can prove that for any $\alpha \gg 1$ we have
    \begin{align}
      \Big\| \sum_{\lambda \g \alpha} P_\lambda( v u_{\frac{\lambda}{\alpha}}) \Big\|_{N^{s,0,0}(I)}
                &\lesa \alpha^\frac{1}{2} \| v \|_{L^\infty_t H^\ell_x(I\times \RR^d)} \| u \|_{L^2_t W^{s,2^*}_x(I\times \RR^d)}\label{eqn:prop schro est end:alpha loss}\\
     \Big\| \sum_{\lambda \g \alpha} P_\lambda( v u_{\frac{\lambda}{\alpha}}) \Big\|_{N^{s,0,0}}
                &\lesa \alpha^{-\frac{1}{2}} \| v \|_{W^{\ell, 0, \ell}_w} \| u \|_{S^{s,0,0}_w}.
                \label{eqn:prop schro est end:alpha gain}
    \end{align}
Since
     \begin{align*}
    \Big\| \sum_{\lambda} P_\lambda( v u_{\gtrsim \lambda}) \Big\|_{N^{s,0,0}(I)} &\lesa \Big( \sum_{\lambda} \lambda^{2s} \|  \ind_I v  u_{\gtrsim \lambda}  \|_{L^2_t L^{2_*}_x}^2 \Big)^\frac{1}{2}\\
            &\lesa \Big\| v \Big(\sum_{\lambda} \lambda^{2s} |u_{\gtrsim \lambda}|^2\Big)^\frac{1}{2} \Big\|_{L^2_t L^{2_*}_x(I\times \RR^d)} \\
            &\lesa \| v \|_{L^\infty_t L^{\frac{d}{2}}_x(I\times \RR^d)} \Big\| \Big(\sum_{\lambda} \lambda^{2s} |u_{\gtrsim \lambda}|^2\Big)^\frac{1}{2} \Big\|_{L^2_t L^{2^*}_x(I\times \RR^d)} \\
            &\lesa \| v \|_{L^\infty_t H^\ell_x(I\times \RR^d)} \| u \|_{L^2_t W^{s,2^*}_x(I\times \RR^d)}
    \end{align*}
an application of \eqref{eqn:prop schro est end:alpha loss} and \eqref{eqn:prop schro est end:alpha gain}, together with the definition of the restricted spaces $N^{s,0,0}(I)$, $S^{s,0,0}_w(I)$, and $W^{\ell, 0, \ell}_w(I)$, then implies that for any $M\gg 1$ we have
    \begin{align*}
        \| v u \|_{N^{s,0,0}(I)} &\les \Big\| \sum_{\lambda} P_\lambda( v u_{\gtrsim \lambda}) \Big\|_{N^{s,0,0}(I)} + \sum_{1 \ll \alpha \les M } \Big\| \sum_{\lambda \g \alpha} P_\lambda( v u_{\frac{\lambda}{\alpha}}) \Big\|_{N^{s,0,0}(I)} + \sum_{\alpha > M } \Big\| \sum_{\lambda \g \alpha} P_\lambda( v u_{\frac{\lambda}{\alpha}}) \Big\|_{N^{s,0,0}(I)} \\
            &\lesa M^\frac{1}{2} \| v \|_{W^{\ell, 0, \ell}_w(I)} \| u \|_{L^2_t W^{s,2^*}_x(I\times \RR^d)} + M^{-\frac{1}{2}}  \| v \|_{W^{\ell, 0, \ell}_w(I)} \| u \|_{S^{s,0,0}_w(I)}
    \end{align*}
Optimising in $M$ then gives \eqref{eqn:prop schro est end:main}. Thus it remains to prove the bounds \eqref{eqn:prop schro est end:alpha loss} and \eqref{eqn:prop schro est end:alpha gain}. For the former estimate, we observe that since $s=\ell + \frac{1}{2}$
    \begin{align*}
        \Big\| \sum_{\lambda \g \alpha} P_\lambda( v u_{\frac{\lambda}{\alpha}}) \Big\|_{N^{s,0,0}(I)} &\lesa
        \Big( \sum_{\lambda \g \alpha} \lambda^{2s} \|  v u_{\frac{\lambda}{\alpha}} \|_{L^2_t L^{2_*}_x(I\times \RR^d)}^2\Big)^\frac{1}{2} \\
                        &\lesa \Big\| \Big( \sum_{\lambda \g \alpha} \lambda^{2s}|v_{\approx \lambda}|^2 | u_{\frac{\lambda}{\alpha}}|^2\big)^\frac{1}{2} \Big\|_{L^2_t L^{2_*}_x(I\times \RR^d)} \\
                        &\lesa \alpha^\frac{1}{2} \Big\| \sup_{\mu} \mu^\ell |v_\mu| \Big( \sum_{\lambda} \lambda |u_\lambda|^2\Big)^\frac{1}{2} \Big\|_{L^2_t L^{2_*}_x(I\times \RR^d)} \\
                        &\lesa \alpha^\frac{1}{2} \| v \|_{L^\infty_t H^\ell_x} \| u \|_{L^2_t W^{s, 2^*}_x(I\times \RR^d)}
    \end{align*}
where the last line followed via H\"older's inequality and Sobolev embedding. The proof of \eqref{eqn:prop schro est end:alpha gain} is more involved, and exploits the fact that the high-low interactions are non-resonant. In particular, since $\lambda \g \alpha \gg 1$, the non-resonant identity
    $$P^N_\lambda(P^{(t)}_{\ll \lambda^2} v u_{\frac{\lambda}{\alpha}}) = P^N_\lambda(P^{(t)}_{\ll \lambda^2} v_{\approx \lambda} P^{(t)}_{\approx \lambda^2} u_{\frac{\lambda}{\alpha}})$$
implies that
    \begin{align}
      &\Big\|\sum_{\lambda \g \alpha} P_\lambda( v u_{\frac{\lambda}{\alpha}}) \Big\|_{N^{s,0,0}}^2
                \lesa  \sum_{\lambda \g \alpha} \lambda^{2s} \| P^N_\lambda( v u_{\frac{\lambda}{\alpha}}) \|_{L^2_t L^{2_*}_x}^2  + \sum_{\lambda \g \alpha} \lambda^{2(s-1)}\| P^F_\lambda( v u_{\frac{\lambda}{\alpha}}) \|_{L^2_{t,x}}^2 \notag \\
                \lesa{}&  \sum_{\lambda \g \alpha} \lambda^{2s} \| P^{(t)}_{\ll \lambda^2} v_{\approx \lambda} P^{(t)}_{\approx \lambda^2} u_{\frac{\lambda}{\alpha}}\|_{L^2_t L^{2_*}_x}^2  +  \sum_{\lambda \g \alpha} \lambda^{2s} \| P^{(t)}_{\gtrsim \lambda^2} v_{\approx \lambda} u_{\frac{\lambda}{\alpha}} \|_{L^2_t L^{2_*}_x}^2  +  \sum_{\lambda \g \alpha} \lambda^{2(s-1)}\| v_{\approx \lambda} u_{\frac{\lambda}{\alpha}} \|_{L^2_{t,x}}^2 . \label{eqn:prop schro est end:decomp II}
    \end{align}

To estimate the first term in \eqref{eqn:prop schro est end:decomp II}, we observe that since $ s= \ell + \frac{1}{2}$, we have
    \begin{align*}
      \Big( \sum_{\lambda \g \alpha} \lambda^{2s} \|  P^{(t)}_{\ll \lambda^2} v_{\approx \lambda}   P^{(t)}_{\approx \lambda^2} u_{\frac{\lambda}{\alpha}}  \|_{L^2_t L^{2_*}_x}^2 \Big)^\frac{1}{2}
                    &=  \Big\| \Big( \sum_{\lambda \g \alpha} \lambda^{2s} \| P^{(t)}_{\ll \lambda^2} v_{\approx \lambda}  P^{(t)}_{\approx \lambda^2} u_{\frac{\lambda}{\alpha}} \|_{L^{2_*}_x}^2\Big)^\frac{1}{2} \Big\|_{L^2_t}\\
                    &\lesa \| v \|_{L^\infty_t H^\ell_x} \Big( \sum_{\lambda \g \alpha} \lambda^{2(s-\ell)} \| P^{(t)}_{\approx \lambda} u_{\frac{\lambda}{\alpha}} \|_{L^2_t L^d_x}^2\Big)^\frac{1}{2}\\
                    &\lesa \| v \|_{L^\infty_t H^\ell_x} \Big( \sum_{\lambda \g \alpha} \lambda^{2(s-\ell-2)} \|  (i\p_t + \Delta) u_{\frac{\lambda}{\alpha}} \|_{L^2_t L^d_x}^2\Big)^\frac{1}{2} \\
                    & \lesa \alpha^{-\frac{3}{2}} \| v \|_{L^\infty_t H^\ell_x} \| (i\p_t + \Delta) u \|_{L^2_t H^{s-1}_x}.
    \end{align*}
To bound the second term in \eqref{eqn:prop schro est end:decomp II}, again using the fact that $\ell + \frac{1}{2} = s = \frac{d-3}{2}$, we have
    \begin{align*}
      \Big( \sum_{\lambda \g \alpha} \lambda^{2s} \| P^{(t)}_{\gtrsim \lambda^2} v_{\approx \lambda} u_{\frac{\lambda}{\alpha}} \|_{L^2_t L^{2_*}_x}^2 \Big)^\frac{1}{2}
        &\lesa  \Big( \sum_{\lambda \g \alpha} \lambda^{2(s+\frac{1}{2})} \| P^{(t)}_{\gtrsim \lambda^2} v_{\approx \lambda}\|_{L^2_{t,x}}^2 \Big)^\frac{1}{2}
        \sup_{\lambda \g \alpha} \lambda^{-\frac{1}{2}} \| u_{\frac{\lambda}{\alpha}} \|_{L^\infty_t L^{d}_x} \\
        &\lesa \alpha^{-\frac{1}{2}} \Big( \sum_{\lambda} \lambda^{2(s-\frac{3}{2})} \| (i\p_t + |\nabla|) P^{(t)}_{\gtrsim \lambda^2} v_{\approx \lambda} \|_{L^2_{t,x}}^2 \Big)^\frac{1}{2} \|u \|_{L^\infty_t H^s_x} \\
        &\lesa \alpha^{- \frac{1}{2}} \|  (i\p_t + |\nabla|) v \|_{L^2_t H^{\ell-1}_x} \|u \|_{L^\infty_t H^s_x}.
    \end{align*}
Finally, for the last term in \eqref{eqn:prop schro est end:decomp II}, since $s-\ell - 1 = -\frac{1}{2}$, an application of Bernstein's inequality gives
    \begin{align*}
    \Big( \sum_{\lambda \g \alpha} \lambda^{2(s-1)}\| v_{\approx \lambda} u_{\frac{\lambda}{\alpha}} \|_{L^2_{t,x}}^2 \Big)^\frac{1}{2}
            &\lesa \Big\| \Big( \sum_{\lambda \g \alpha} \lambda^{2(s-1)} \Big(\frac{\lambda}{\alpha}\Big)^{d-2} \| v_{\approx \lambda} \|_{L^2_x}^2 \| u_{\frac{\lambda}{\alpha}} \|_{L^{2^*}_x}^2 \Big)^\frac{1}{2} \Big\|_{L^2_t} \\
            &\lesa \alpha^{-\frac{1}{2}} \| v \|_{L^\infty_t H^\ell} \big\| \sup_{\lambda} \lambda^s \| u_\lambda \|_{L^{2^*}_x} \big\|_{L^2_t} \\
            &\lesa \alpha^{-\frac{1}{2}} \|v \|_{L^\infty_t H^\ell} \| u \|_{L^2_t W^{s, 2^*}_x}.
    \end{align*}
\end{proof}

We have a related estimate to deal with the wave nonlinearity.

\begin{proposition}\label{prop:wave est end}
Let $d\g 4$, $ (s, \ell) = (\frac{d-3}{2}, \frac{d-4}{2})$. Let $0\in I\subset \RR$ be an interval. Then,
    \begin{equation}\label{eqn:prop wave est end:main}
            \| \mc{J}_0( |\nabla|(\overline{\varphi}\psi) ) \|_{W^{\ell, 0, \ell}(I)} \lesa \Big(\| \varphi \|_{L^2_t W^{s, 2^*}_x(I\times \RR^d)} \| \psi \|_{L^2_t W^{s, 2^*}_x(I\times \RR^d)}\Big)^\frac{1}{2} \Big(\| \varphi \|_{S^{s,0,0}_w(I)} \| \psi \|_{S^{s,0,0}_w(I)} \Big)^\frac{1}{2}
    \end{equation}
\end{proposition}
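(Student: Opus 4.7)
The plan is to adapt the $\alpha$-decomposition strategy of Proposition \ref{prop:schro est for endpoint} to the wave output side. By Lemma \ref{lem:energy ineq wave}, the left-hand side of \eqref{eqn:prop wave est end:main} is controlled by
\[
\Big(\sum_\mu \mu^{2(\ell+1)}\|P^{(t)}_{\ll\mu^2}P_\mu(\overline\varphi\psi)\|_{L^1_tL^2_x}^2\Big)^{1/2}+\Big(\sum_\mu \mu^{2(\ell-1)}\|P_\mu(\overline\varphi\psi)\|_{L^\infty_tL^2_x}^2\Big)^{1/2}+\Big(\sum_\mu \mu^{2\ell}\|P_\mu(\overline\varphi\psi)\|_{L^2_{t,x}}^2\Big)^{1/2}
\]
plus the low-frequency contribution, and I perform the standard frequency trichotomy of the product $\overline\varphi\psi$ into high-low, low-high, and high-high interactions. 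The $L^\infty_tL^2_x$ piece and the low-frequency piece are subcritical under the endpoint exponents and follow from the bounds already appearing in the proof of Theorem \ref{thm:main bilinear wave}, strengthened to the geometric-mean form using the trivial embedding $L^2_tW^{s,2^*}_x\hookrightarrow S^{s,0,0}$, which allows one to swap $\|\cdot\|_S$ for $\|\cdot\|_{L^2_tW^{s,2^*}_x}$ on at most one factor.

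For the high-high contributions $\overline\varphi_{\lambda_1}\psi_{\lambda_2}$ with $\lambda_1\approx\lambda_2\gtrsim\mu$, a direct H\"older estimate placing both factors in $L^2_tL^{2^*}_x$ already suffices: the scaling exponent $\ell+\frac{d-2}{2}-2s$ vanishes at the endpoint and the resulting dyadic sum collapses via Cauchy--Schwarz in $\ell^2_{\lambda_1}$ to $\|\varphi\|_{L^2_tW^{s,2^*}_x}\|\psi\|_{L^2_tW^{s,2^*}_x}$, which is stronger than the required geometric mean.

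For the high-low contributions $\overline\varphi_{\approx\mu}\psi_{\mu/\alpha}$ parametrised by the dyadic ratio $\alpha\g 2$ (with the symmetric low-high case handled by complex conjugation), I would follow Proposition \ref{prop:schro est for endpoint} and establish two complementary dyadic bounds. The loss estimate
\[
\mu^{\ell+1}\|P^{(t)}_{\ll\mu^2}P_\mu(\overline\varphi_{\approx\mu}\psi_{\mu/\alpha})\|_{L^1_tL^2_x}+\mu^\ell\|P_\mu(\overline\varphi_{\approx\mu}\psi_{\mu/\alpha})\|_{L^2_{t,x}}\lesa \alpha^{1/2}\|\varphi_{\approx\mu}\|_{L^2_tW^{s,2^*}_x}\|\psi_{\mu/\alpha}\|_{L^2_tW^{s,2^*}_x}
\]
follows after discarding the modulation cutoff by H\"older combined with the Sobolev embedding $W^{s,2^*}\hookrightarrow L^d$, which at the endpoint $s=\tfrac{d-3}{2}$ costs exactly $(\mu/\alpha)^{-1/2}$. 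The gain estimate
\[
\mu^{\ell+1}\|P^{(t)}_{\ll\mu^2}P_\mu(\overline\varphi_{\approx\mu}\psi_{\mu/\alpha})\|_{L^1_tL^2_x}+\mu^\ell\|P_\mu(\overline\varphi_{\approx\mu}\psi_{\mu/\alpha})\|_{L^2_{t,x}}\lesa \alpha^{-1/2}\|\varphi_{\approx\mu}\|_{S^{s,0,0}_{\approx\mu}}\|\psi_{\mu/\alpha}\|_{S^{s,0,0}_{\mu/\alpha}}
\]
uses non-resonance: the identity $P^{(t)}_{\ll\mu^2}P_\mu(\overline{P^N\varphi}_{\approx\mu}P^N\psi_{\mu/\alpha})=0$ holds since $\overline{P^N\varphi}$ has temporal Fourier support near $\tau\approx+\mu^2$ and $P^N\psi_{\mu/\alpha}$ near $\tau\approx-(\mu/\alpha)^2$, so the product is supported at temporal frequency $\gtrsim\mu^2$. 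Consequently at least one factor must lie in the far-cone $P^F$; moreover the constraint $|\tau|\ll\mu^2$ on the product forces the effective Schr\"odinger modulation of the far-cone factor to be $\approx\mu^2$, which through the identity $\|C_{\approx\mu^2}\cdot\|_{L^2_{t,x}}\lesa \mu^{-2}\|(i\p_t+\Delta)\cdot\|_{L^2_{t,x}}$ produces the $\alpha^{-1/2}$ factor when balanced with a Bernstein loss on the paraboloidally localised partner placed in $L^2_tL^\infty_x$ or $L^2_tL^d_x$.

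Interpolating by summing the loss estimate for $\alpha\les M$ and the gain estimate for $\alpha>M$ and optimising in the dyadic cutoff $M$, exactly as in the closing step of Proposition \ref{prop:schro est for endpoint}, produces the geometric mean $(\|\varphi\|_{L^2_tW^{s,2^*}_x}\|\psi\|_{L^2_tW^{s,2^*}_x})^{1/2}(\|\varphi\|_{S^{s,0,0}}\|\psi\|_{S^{s,0,0}})^{1/2}$ with no residual $\alpha$-dependence. The main obstacle I anticipate is the symmetric low-high configuration: the far-cone factor now carries the small spatial frequency $\mu/\alpha$, so the natural Bernstein/Sobolev balancing worsens and the H\"older pairing must be adjusted (for instance from $L^2_tL^\infty_x\cdot L^2_tL^2_x$ to $L^2_tL^{2^*}_x\cdot L^2_tL^d_x$) in order to consume the extra $\alpha$-powers, so that the uniform $\alpha^{-1/2}$ gain survives in every dimension $d\g 4$.
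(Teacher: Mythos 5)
Your skeleton is the paper's: decompose by the dyadic frequency ratio $\alpha$, prove a loss bound with $\alpha^{1/2}$ in Strichartz norms and a non-resonant gain bound with $\alpha^{-1/2}$ in $S$-norms, treat the high-high block separately, and optimise in the cutoff $M$; your non-resonance identity and the $L^1_tL^2_x$ loss computation (Sobolev $W^{s,2^*}\hookrightarrow L^d$ gaining $(\mu/\alpha)^{-1/2}$) are correct. However, two of your component claims are false as stated. First, the piece $\big(\sum_\mu\mu^{2(\ell-1)}\|P_\mu(\overline\varphi\psi)\|^2_{L^\infty_tL^2_x}\big)^{1/2}$ in your reduction cannot be bounded by the geometric mean at all: shrink $I$ to a tiny interval around $0$ with fixed nontrivial $\varphi,\psi$ — the left side is bounded below by its value at $t=0$, while $\|\varphi\|_{L^2_tW^{s,2^*}_x(I\times\RR^d)}$, $\|\psi\|_{L^2_tW^{s,2^*}_x(I\times\RR^d)}\to 0$ and the $S^{s,0,0}(I)$-norms stay bounded. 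Your justification is also logically backwards: the embedding goes $S^{s,0,0}\hookrightarrow L^2_tW^{s,2^*}_x$ (with $a=0$ the $S$-norm controls the endpoint Strichartz norm), so replacing an $S$-norm by a Strichartz norm on one factor \emph{strengthens} an estimate and cannot be deduced from the bounds in Theorem \ref{thm:main bilinear wave}. The paper avoids this entirely: for the loss and high-high parts it uses only the crude energy inequality \eqref{eqn:prop wave est end:energy}, which involves just $\|G_\mu\|_{L^1_tL^2_x}$ and $\|G_\mu\|_{L^2_{t,x}}$, and it invokes Lemma \ref{lem:energy ineq wave} — the only place the $L^\infty_tL^2_x$ norm of the product is needed — solely inside the gain estimate, where a bound of the form $\alpha^{-3/2}\|\varphi\|_{S^{s,0,0}}\|\psi\|_{S^{s,0,0}}$ without any Strichartz factor suffices.

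Second, both your loss estimate (its $\mu^\ell\|P_\mu(\cdot)\|_{L^2_{t,x}}$ term) and your high-high argument claim control of an $L^2_{t,x}$ quantity by a product of two $L^2_t$-based Strichartz norms. This is impossible on the level of time integrability: $\|fg\|_{L^2_t}$ cannot be dominated by $\|f\|_{L^2_t}\|g\|_{L^2_t}$ (take both factors concentrated on a time interval of length $\delta$ and let $\delta\to0$). This is precisely why the paper's loss estimate \eqref{eqn:prop wave est end:loss} carries a second term of geometric-mean type with $\alpha^{-1/2}$, coming from the $L^2_{t,x}$ bound \eqref{eqn:prop wave est end:L2}, which is proved by Cauchy--Schwarz splitting each factor half into a Strichartz norm and half into an $L^\infty_t$-based norm; the high-high block is likewise only the geometric mean, not a pure Strichartz product. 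With these repairs (plain $L^1_tL^2_x$ energy bound outside the gain, geometric-mean treatment of every $L^2_{t,x}$ component) your optimisation in $M$ closes exactly as in the paper, but as written the proposal's reduction and two of its three displayed dyadic estimates do not hold.
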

\begin{proof} 
Suppose for the moment that for any $\alpha \gg 1$ we have the bounds
    \begin{align}
      \Big\| \sum_{\lambda \g \alpha} |\nabla| \mc{J}_0( \overline{\varphi}_\lambda \psi_{\frac{\lambda}{\alpha}} ) \Big\|_{W^{\ell, 0, \ell}(I)}
&\lesa \, \alpha^{\frac{1}{2}} \|\varphi \|_{L^2_t W^{s, 2^*}_x(I\times \RR^d)} \| \psi \|_{L^2_t W^{s, 2^*}_x(I\times \RR^d)} \notag \\
       &+ \alpha^{-\frac{1}{2}} \Big( \|\varphi \|_{L^2_t W^{s, 2^*}_x(I\times \RR^d)} \| \psi \|_{L^2_t W^{s, 2^*}_x(I\times \RR^d)} \|\varphi \|_{S^{s,0,0}_w(I)} \| \psi \|_{S^{s,0,0}_w(I)}\Big)^\frac{1}{2}, \label{eqn:prop wave est end:loss} \\
    \Big\| \sum_{\lambda \g \alpha} |\nabla| \mc{J}_0( \overline{\varphi}_\lambda \psi_{\frac{\lambda}{\alpha}} ) \Big\|_{W^{\ell, 0, \ell}(I)}
&\lesa \, \alpha^{-\frac{1}{2}} \|\varphi \|_{S^{s,0,0}_w(I)} \| \psi \|_{S^{s,0,0}_w(I)} ,\label{eqn:prop wave est end:gain} \\
      \Big\| \sum_{\lambda} |\nabla| \mc{J}_0( \overline{\varphi}_\lambda  \psi_{\approx \lambda}) \Big\|_{W^{\ell, 0, \ell}(I)}
&\lesa \, \Big( \|\varphi \|_{L^2_t W^{s, 2^*}_x(I\times \RR^d)} \| \psi \|_{L^2_t W^{s, 2^*}_x(I\times \RR^d)} \|\varphi \|_{S^{s,0,0}_w(I)} \| \psi \|_{S^{s,0,0}_w(I)}\Big)^\frac{1}{2}. \label{eqn:prop wave est end:mid}
    \end{align}
Let $M \gg 1$. As in Proposition \ref{prop:schro est for endpoint}, by decomposing
    $$ \overline{\varphi} \psi = \sum_{\alpha \gg 1} \sum_{\lambda \g \alpha} \overline{\varphi}_\lambda \psi_{\frac{\lambda}{\alpha}} + \sum_{\lambda} \overline{\varphi}_\lambda \psi_{\approx \lambda} + \sum_{\alpha \gg 1} \sum_{\lambda \g \alpha} \overline{\varphi}_{\frac{\lambda}{\alpha} } \psi_\lambda$$
and using symmetry, an application of \eqref{eqn:prop wave est end:loss} for $\alpha <M$, \eqref{eqn:prop wave est end:gain} for $\alpha \g M$, and
\eqref{eqn:prop wave est end:mid} for the remaining high-high interactions gives
    \begin{align*}
      \| V& \|_{W^{\ell, 0, \ell }(I)} \\
                &\lesa \| |\nabla| \mc{J}_0(\overline{\varphi} \psi) \|_{W^{\ell, 0, \ell}(I)} \\
                &\lesa \sum_{\alpha \gg 1} \Big(\Big\| \sum_{\lambda \g \alpha } |\nabla| \mc{J}_0( \overline{\varphi}_\lambda \psi_{\frac{\lambda}{\alpha}}) \Big\|_{W^{\ell, 0, \ell}(I)} + \Big\| \sum_{\lambda \g \alpha } |\nabla| \mc{J}_0( \overline{\varphi}_{\frac{\lambda}{\alpha}} \psi_\lambda) \Big\|_{W^{\ell, 0, \ell}(I)}\Big) + \Big\| \sum_{\lambda } |\nabla| \mc{J}_0( \overline{\varphi}_\lambda \psi_{\approx \lambda}) \Big\|_{W^{\ell, 0, \ell}(I)} \\
                &\lesa M^{\frac{1}{2}}  \|\varphi \|_{L^2_t W^{s, 2^*}_x(I\times \RR^d)} \| \psi \|_{L^2_t W^{s, 2^*}_x(I\times \RR^d)} + \Big( \|\varphi \|_{L^2_t W^{s, 2^*}_x(I\times \RR^d)} \| \psi \|_{L^2_t W^{s, 2^*}_x(I\times \RR^d)} \|\varphi \|_{S^{s,0,0}_w(I)} \| \psi \|_{S^{s,0,0}_w(I)}\Big)^\frac{1}{2} \\
       &\qquad + M^{-\frac{1}{2}} \| \varphi \|_{S^{s,0,0}_w(I)} \| \psi \|_{S^{s,0,0}_w(I)} +\Big( \|\varphi \|_{L^2_t W^{s, 2^*}_x(I\times \RR^d)} \| \psi \|_{L^2_t W^{s, 2^*}_x(I\times \RR^d)} \|\varphi \|_{S^{s,0,0}_w(I)} \| \psi \|_{S^{s,0,0}_w(I)}\Big)^\frac{1}{2}.
    \end{align*}
Optimising in $M$ then gives \eqref{eqn:prop wave est end:main}.

It remains to prove the bounds  \eqref{eqn:prop wave est end:loss}, \eqref{eqn:prop wave est end:gain}, and \eqref{eqn:prop wave est end:mid}. We begin by noting that an application of H\"older's inequality and the  Sobolev embedding together with the assumptions on $(s,\ell)$ imply that
    \begin{align*}
      \Big( \sum_{\lambda \g \alpha} \lambda^{2(\ell + 1)} \|  \overline{\varphi}_\lambda \psi_{\frac{\lambda}{\alpha}} \|_{L^1_t L^2_x(I\times \RR^d)}^2\Big)^\frac{1}{2}
        &\lesa \alpha^{\ell + 1 -s} \Big\| \Big( \sum_{\lambda} \lambda^{2s} |\varphi_\lambda|^2\Big)^\frac{1}{2} \Big( \sum_{\lambda} \lambda^{2(\ell+1-s)} |\psi_\lambda|^2\Big)^\frac{1}{2}\Big\|_{L^1_t L^2_x(I\times \RR^d)} \\
        &\lesa \alpha^{\ell + 1 -s} \| \varphi \|_{L^2_t W^{s,2^*}_x(I\times \RR^d)} \| \psi \|_{L^2_t W^{\ell +1 -s, d}_x(I\times \RR^d)}\\
         &\lesa  \alpha^{\frac{1}{2}} \| \varphi \|_{L^2_t W^{s,2^*}_x(I\times \RR^d)} \| \psi \|_{L^2_t W^{s, 2^*}_x(I\times \RR^d)}.
    \end{align*}
 On the other hand, again applying a combination of H\"older's inequality and Sobolev embedding, we have
    \begin{align}
        &\Big( \sum_{\lambda \g \alpha} \lambda^{2\ell} \| \ind_I \overline{\varphi}_\lambda \psi_{\frac{\lambda}{\alpha}} \|_{L^2_{t,x}}^2\Big)^\frac{1}{2}\notag \\
                &\lesa \alpha^{\ell - s} \Big\| \Big( \sum_{ \lambda} \lambda^{2s} |\varphi_\lambda|^2\Big)^\frac{1}{2} \sup_\lambda \lambda^{\ell -s} |\psi_\lambda| \Big\|_{L^2_{t,x}(I\times \RR^d)} \notag \\
                &\lesa \alpha^{\ell -s } \Big( \| \varphi \|_{L^2_t W^{s,2^*}_x(I \times \RR^d)} \| \psi \|_{L^\infty_t W^{\ell - s, d}_x(I\times \RR^d)}\Big)^\frac{1}{2}\Big( \| \varphi \|_{L^\infty_t H^s_x} \big\| \sup_\lambda \lambda^{\ell - s} \| \psi_\lambda \|_{L^\infty_x} \big\|_{L^2_t(I)} \Big)^\frac{1}{2} \notag \\
                &\lesa \alpha^{-\frac{1}{2}} \Big( \| \varphi \|_{L^2_t W^{s, 2^*}_x(I\times \RR^d)} \| \psi \|_{L^2_t W^{s, 2^*}_x(I\times \RR^d)} \| \varphi\|_{L^\infty_t H^s_x(I\times \RR^d)} \| \psi \|_{L^\infty_t H^s_x(I\times \RR^d)} \Big)^\frac{1}{2}. \label{eqn:prop wave est end:L2}
    \end{align}
 The bound \eqref{eqn:prop wave est end:loss} now follows from the standard energy inequality as
     \begin{align}
        \| \mc{J}_0(G) \|_{W^{\ell, 0, \ell}(I)} \les \| \mc{J}_0(\ind_I G) \|_{W^{\ell, 0, \ell}} &\lesa \Big( \sum_{\mu \in 2^\NN} \mu^{2\ell} \| \mc{J}_0(\ind_I G_\mu)\|_{L^\infty_t L^2_x}^2 + \mu^{2(\ell - 1)} \| \ind_I G_\mu \|_{L^2_{t,x}}^2\Big)^\frac{1}{2} \notag \\
         &\lesa \Big( \sum_{\mu \in 2^\NN} \mu^{2\ell} \|G_\mu\|_{L^1_t L^2_x(I\times \RR^d)}^2 + \mu^{2(\ell - 1)} \| \ind_I G_\mu \|_{L^2_{t,x}}^2\Big)^\frac{1}{2}. 
         \label{eqn:prop wave est end:energy}
     \end{align}

We now turn to the proof of \eqref{eqn:prop wave est end:gain}, this requires exploiting the fact that the high-low interactions are non-resonant.
More precisely, since $\lambda \g \alpha \gg 1$, the non-resonant identity
    $$ P^{(t)}_{\ll \lambda^2} ( \overline{\varphi}_\lambda \psi_{\frac{\lambda}{\alpha}}) = P^{(t)}_{\ll \lambda^2} ( C_{\gtrsim \lambda^2} \overline{\varphi}_\lambda \psi_{\frac{\lambda}{\alpha}}) + P^{(t)}_{\ll \lambda^2} ( C_{\ll \lambda^2} \overline{\varphi}_\lambda P^{(t)}_{\approx \lambda^2} \psi_{\frac{\lambda}{\alpha}})$$
together with
    \begin{align*}
      \Big(\sum_{\lambda \g \alpha } \lambda^{2(\ell + 1)} \big\| P^{(t)}_{\ll \lambda^2} ( C_{\gtrsim \lambda^2} \overline{\varphi}_\lambda \psi_{\frac{\lambda}{\alpha}}) \big\|_{L^1_t L^2_x}^2\Big)^\frac{1}{2}
                        &\lesa \alpha^{\ell - s} \Big(\sum_{\lambda \g \alpha } \lambda^{2(s+1)} \| C_{\gtrsim \lambda^2} \overline{\varphi}_\lambda \|_{L^2_{t,x}}^2 \Big)^\frac{1}{2} \sup_{\lambda} \lambda^{\ell -s} \| \psi_\lambda \|_{L^2_t L^\infty_x} \\
                        &\lesa \alpha^{-\frac{1}{2}} \| (i\p_t + \Delta) \overline{\varphi}\|_{L^2_t H^{s-1}_x}  \| \psi\|_{L^2_t W^{s, 2^*}_x}
    \end{align*}
and
    \begin{align*}
      \Big(\sum_{\lambda \g \alpha } \lambda^{2(\ell + 1)} \big\| P^{(t)}_{\ll \lambda^2} ( C_{\ll \lambda^2} \overline{\varphi}_\lambda P^{(t)}_{\approx \lambda^2}\psi_{\frac{\lambda}{\alpha}}) \big\|_{L^1_t L^2_x}^2\Big)^\frac{1}{2}
                &\lesa \sup_{\lambda} \lambda^s \| \varphi_\lambda \|_{L^2_t L^{2^*}_x} \Big( \sum_{\lambda \g \alpha} \lambda^{2(\ell+1-s)} \| P^{(t)}_{\approx \lambda^2} \psi_{\frac{\lambda}{\alpha}} \|_{L^2_t L^d_x}^2 \Big)^\frac{1}{2} \\
                &\lesa \alpha^{-\frac{3}{2}} \| \varphi \|_{L^2_t W^{s, 2^*}_x} \| (i\p_t + \Delta) \psi \|_{L^2_t H^{s-1}_x}
    \end{align*}
and the bounds \eqref{eqn:prop wave est end:energy} and \eqref{eqn:prop wave est end:L2} implies that 
    $$ \Big\| \sum_{\lambda \g \alpha} |\nabla| \mc{J}_0( P^{(t)}_{\ll \lambda^2} \overline{\varphi}_\lambda \psi_{\frac{\lambda}{\alpha}} ) \Big\|_{W^{\ell, 0, \ell}}
\lesa \, \alpha^{-\frac{1}{2}} \|\varphi \|_{S^{s,0,0}_w} \| \psi \|_{S^{s,0,0}_w}. $$ 
On the other hand, for the high temporal frequency term, we note that the same argument giving \eqref{eqn:lem energy ineq:Linf bound} together with Berstein's inequality gives 
    $$ \| \mc{J}_0( P^{(t)}_{\gtrsim \lambda^2} G_\lambda ) \|_{L^\infty_t L^2_x} \lesa \sum_{\nu \gtrsim \lambda^2}  \nu^{-1} \| P^{(t)}_{\nu} G_\lambda \|_{L^\infty_t L^2_x} \lesa \lambda^{-1} \| G_\lambda \|_{L^2_{t,x}}$$
and hence \eqref{eqn:prop wave est end:gain} now follows from another application of \eqref{eqn:prop wave est end:L2}. 

The final estimate is the high-high case \eqref{eqn:prop wave est end:mid}. An application of Sobolev embedding gives
    \begin{align*}
      \Big( \sum_{\lambda} \lambda^{2(\ell + 1)} \|  \ind_I \overline{\varphi}_\lambda \psi_{\approx \lambda} \|_{L^1_t L^2_x}^2 \Big)^\frac{1}{2}
                &\lesa \Big\| \Big( \sum_\lambda \lambda^{2s} |\varphi_\lambda|^2\Big)^\frac{1}{2} \Big( \sum_{\lambda} \lambda^{2(\ell + 1-s)} |\psi_\lambda|^2 \Big)^\frac{1}{2} \Big\|_{L^1_t L^2_x} \\
                &\lesa \| \varphi \|_{L^2_t W^{s, 2^*}_x(I\times \RR^d)} \| \psi \|_{L^2_t W^{s,2^*}_x(I\times \RR^d)}
    \end{align*}
and hence \eqref{eqn:prop wave est end:mid} follows from the energy estimate \eqref{eqn:prop wave est end:energy} together with the $L^2_{t,x}$ bound \eqref{eqn:prop wave est end:L2} in the special case $\alpha \approx 1$.
\end{proof}

\section{Well-posedness results}\label{sec:wp}

\subsection{Global well-posedness for the model problem}\label{subsec:gwp-model}
The first step in the proof of Theorem \ref{thm:small data gwp}, is to prove the following global result for the model problem
        $$ ( i \p_t + \Delta - \Re(V)) u = F, \qquad u(0) = f$$
where we assume that $ f\in H^s$ and $\| F \|_{N^{s,a,0}}<\infty$. In particular, this shows that the Duhamel operators $\mc{I}_V$ are well-defined as maps from $N^{s,a,0}$ to $S^{s,a,0}$, even for large wave potentials $V$.

\begin{theorem}\label{thm:global model}
Let $0 \les  s \les \ell + 2$ and $\ell \g \frac{d-4}{2}$ with $(s, \ell) \not = (\frac{d}{2}, \frac{d}{2}-2)$. Let $\beta = \max\{\frac{d-4}{2}, s-1\}$ and $a=a^*(s,\ell)$ where $a^*(s, \ell)$ is as in \eqref{eqn:choice of a,b}. There exists $\epsilon>0$ such that if $0\in I \subset \RR$ is an open interval, and
        $$f \in H^s(\RR^d), \qquad V_L = e^{it|\nabla|}g \in L^\infty_t H^\ell_x, \qquad  \|V - V_L \|_{W^{\ell, a,\beta}(I)} < \epsilon, \qquad F \in N^{s,a, 0}(I) $$
then there exists a unique solution $u \in C(I, H^s(\RR^d)) \cap L^2_t L^{2^*}_x(I\times \RR^d)$ to the Cauchy problem
        $$ (i\p_t + \Delta - \Re(V)) u =  F, \qquad u(0) = f. $$
Moreover, there exists a constant $C=C(V_L)>0$ (independent of $I$, $f$, $V$, and $F$), such that
    $$ \| u \|_{S^{s,a,0}(I)} \les C(V_L)\big( \| f\|_{H^s} + \| F \|_{N^{s,a,0}(I)} \big) $$
and, writing $I=(T_-, T_+)$ with $-\infty\les T_- < T_+ \les \infty$, there exists $f_\pm \in H^s$ such that
    $$ \lim_{t\to T_\pm} \| e^{-it\Delta} u(t) - f_\pm \|_{H^s_x} = 0. $$
\end{theorem}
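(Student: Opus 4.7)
The plan is to treat the possibly large free wave $V_L = e^{it|\nabla|}g$ as a dispersive potential and reduce to a Picard iteration on each piece of a finite subinterval decomposition of $\RR$. Writing $V = V_L + W$ with $W$ small in $W^{\ell,a,\beta}(I)$, the principal tool is the local bilinear estimate of Corollary \ref{cor:local schro bi}, whose sum norm $W^{\ell,a,\beta}(I) + L^2_t W^{s,d}_x(I\times\RR^d)$ absorbs the large $L^\infty_t H^\ell_x$ contribution of $V_L$ into the dispersive component, which becomes small on short intervals or at low frequencies.

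\textbf{Subinterval decomposition and local iteration.} Using wave Strichartz estimates, Bernstein's inequality, and $H^\ell$-tail control on $g$, we produce finitely many open subintervals $I_1, \ldots, I_N$ covering $\RR$ with bounded overlap, on each of which we can decompose $V_L = V_{L,j,1} + V_{L,j,2}$ with
\[
\|V_{L,j,1}\|_{W^{\ell,a,\beta}(I_j)} + \|V_{L,j,2}\|_{L^2_t W^{s,d}_x(I_j\times\RR^d)} < \delta ,
\]
for $\delta > 0$ chosen small with respect to the absolute constant in Corollary \ref{cor:local schro bi}. Here the high-frequency tail $P_{>M}V_L$ is assigned to the first norm (using $\|P_{>M}g\|_{H^\ell} \to 0$ as $M\to\infty$), while the low-frequency piece $P_{\les M}V_L$ is placed in the second. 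The total number $N$ depends only on $\|g\|_{H^\ell}$ and the dimension $d$. On the subinterval $I_1 \ni 0$, the Duhamel map $\Phi(\psi) := e^{it\Delta} f - i\mc{I}_0[\Re(V)\psi + F]$ is a contraction on a suitable ball in $S^{s,a,0}(I_1)$, by Lemma \ref{lem:energy ineq} combined with Corollary \ref{cor:local schro bi} (assigning $V_{L,1,1}+W$ to the first norm and $V_{L,1,2}$ to the second); this yields a unique local solution that extends continuously to the closure of $I_1$ by the continuity clause in Lemma \ref{lem:energy ineq}.

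\textbf{Patching, scattering, and uniqueness.} Iterating the local construction on $I_2, \ldots, I_N$ (initializing each from the terminal value of the preceding solution) and gluing by Lemma \ref{lem:decomposability} produces $u \in S^{s,a,0}(I)$ with the required bound $\|u\|_{S^{s,a,0}(I)} \les C(V_L)(\|f\|_{H^s} + \|F\|_{N^{s,a,0}(I)})$, the constant $C(V_L)$ being controlled by $C_0^N$ up to overlap factors. Scattering follows directly from Lemma \ref{lem:scattering} applied to the effective source $\Re(V)u + F \in N^{s,a,0}(I)$, where membership of $\Re(V)u$ is guaranteed by Theorem \ref{thm:main bilinear schro}. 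For uniqueness in the weaker class $C(I,H^s) \cap L^2_t L^{2^*}_x$, one runs the same contraction at the endpoint Strichartz level: the difference $w$ of two such solutions obeys $w = -i\mc{I}_0[\Re(V)w]$ with $w(0)=0$, and on each $I_j$ the endpoint Strichartz estimate combined with the subinterval decomposition forces $\|w\|_{L^2_t L^{2^*}_x(I_j)} \les \tfrac{1}{2} \|w\|_{L^2_t L^{2^*}_x(I_j)}$, hence $w\equiv 0$ iteratively.

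\textbf{The main obstacle.} The delicate point is ensuring that a \emph{finite} subinterval decomposition suffices to cover all of $\RR$, rather than just a bounded time interval, which requires global dispersive control of $P_{\les M} V_L$ in a norm that embeds into $L^2_t L^d_x(\RR\times\RR^d)$. When $s > \ell$, the free wave $P_{\les M} V_L$ does not automatically lie in $L^2_t W^{s,d}_x(\RR\times\RR^d)$ at the natural wave-Strichartz regularity, so one needs to trade Sobolev regularity against dispersive decay via Bernstein and a refined global dispersive estimate for $e^{it|\nabla|} g$. This balance is also what fixes the dependence of the constant $C(V_L)$ on $V_L$.
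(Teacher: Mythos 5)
Your overall architecture matches the paper's proof almost exactly: a small-potential local contraction based on Lemma \ref{lem:energy ineq} and Corollary \ref{cor:local schro bi} (this is the paper's Proposition \ref{prop: model with small wave}), a finite decomposition of $\RR$ into intervals on which the free wave is small in $W^{\ell,a,\beta}+L^2_tW^{s,d}_x$, gluing via Lemma \ref{lem:decomposability}, uniqueness in $C(I,H^s)\cap L^2_tL^{2^*}_x$ by the endpoint Strichartz estimate, and scattering via Lemma \ref{lem:scattering}. The genuine gap is in how you build the interval decomposition. You split $V_L$ by a frequency cutoff, putting $e^{it|\nabla|}P_{>M}g$ into $W^{\ell,a,\beta}$ and $e^{it|\nabla|}P_{\les M}g$ into $L^2_tW^{s,d}_x$. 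In the energy-critical dimension $d=4$ this fails: the pair $(q,r)=(2,4)$ is not wave-admissible ($\tfrac1q\les\tfrac{d-1}{2}(\tfrac12-\tfrac1r)$ reads $\tfrac12\les\tfrac38$), and a Knapp example at unit frequency shows that $\|e^{it|\nabla|}P_{\les M}g\|_{L^2_tL^4_x(\RR^{1+4})}$ is \emph{not} controlled by $\|g\|_{L^2}=\|g\|_{H^\ell}$ ($\ell=0$ here). Frequency truncation only buys derivatives via Bernstein; it does not improve the time decay, so no "refined dispersive estimate" of the kind you gesture at in your last paragraph can rescue rough low-frequency $L^2$ data. (For $d\g5$ the pair $(2,d)$ is admissible and your version can be made to work, but the theorem is claimed for all $d\g4$.)

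The paper's fix (Lemma \ref{lem:interval decom}, following Bejenaru--Herr) is different in a crucial way: decompose $g=g_1+g_2$ with $\|g_1\|_{H^\ell}<\epsilon$ and $g_2\in C^\infty_0(\RR^d)$, keep $e^{it|\nabla|}g_1$ globally in $W^{\ell,a,\beta}$ by the free-wave energy estimate (Lemma \ref{lem:energy ineq wave}), and use the pointwise dispersive decay $|t|^{-\frac{d-1}{2}}$ of the free wave with smooth compactly supported data to get $e^{it|\nabla|}g_2\in L^2_tW^{s,d}_x(\RR^{1+d})$ globally; the finitely many intervals are then obtained by chopping this finite global norm. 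A consequence you should also correct: the number of intervals $N$ (hence $C(V_L)$) depends on the profile of $g$, not only on $\|g\|_{H^\ell}$, since the splitting $g=g_1+g_2$ is not quantitative in the norm alone; this is consistent with the paper's remark that the constants genuinely depend on the wave datum and not merely its size. Apart from this, your contraction, gluing, uniqueness, and scattering steps are sound (for the scattering source term the paper uses Corollary \ref{cor:local schro bi} rather than Theorem \ref{thm:main bilinear schro}, which avoids the extra non-degeneracy conditions of the latter, but this is a minor point).
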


\begin{remark}[Free wave potentials] The potential $V$ in Theorem \ref{thm:global model} should be thought of as a small perturbation of the free wave $V_L=e^{it|\nabla|}g$. In particular, in the special case where the potentially is simply a free wave, i.e. $V=V_L$, the  smallness condition is trivially satisfied. Consequently, for any $f \in H^s$, $g \in H^\ell$, $F\in N^{s,a,0}$, Theorem \ref{thm:global model} gives a global solution $u \in S^{s,a,0}$ to the Schr\"odinger equation
        \begin{equation}\label{eqn:linear schro free wave potential} (i\p_t + \Delta - \Re(V_L) ) u = F, \qquad u(0) = f. \end{equation}
Thus no smallness condition is required on the potential $V_L$ or the data $f$. Moreover, for any open interval $I\subset \RR$ and $g\in H^\ell$, the Duhamel integral is a continuous map $\mc{I}_{V_L} : N^{s,a,0}(I) \to S^{s,a,0}(I)$, and we have the bound
        $$ \| \mc{I}_{V_L}[F] \|_{S^{s,a,0}(I)} \lesa \| F \|_{N^{s,a, 0}(I)}. $$
\end{remark}

\begin{remark}[Strichartz control]\label{rmk:str-control}
When $a>0$, the solution space $S^{s,a,0}$ does not control the Strichartz space $L^2_t W^{s, 2^*}_x$. On the other hand, when $0\les s < \ell + 1$, we have $a^*(s,\ell)=0$. Therefore, an application of  \eqref{eqn:stri with loss} and Theorem \ref{thm:global model} implies that solutions to the Schr\"odinger equation \eqref{eqn:linear schro free wave potential} satisfy the (global) Strichartz estimate
    \begin{align*}
         \Big( \sum_{\lambda \in 2^\NN} \lambda^{2s} \| u_\lambda \|_{L^2_t L^{2^*}_x(\RR^{1+d})}^2\Big)^\frac{1}{2} \lesa \| u \|_{S^{s,0,0}} \lesa \| f \|_{H^s} + \| F \|_{N^{s,0,0}}.
    \end{align*}
In particular, for any $0\les s < \ell + 1$, $\ell \g \frac{d-4}{2}$, and $(f,g)\in H^s\times H^\ell$ we have
    $$ \| u \|_{L^2_t W^{s,2^*}_x} \lesa \| f \|_{H^s} + \big\| \big(i\p_t + \Delta - \Re(V_L)\big) u \big\|_{L^2_t W^{s,2_*}_x}. $$
\end{remark}

The first step in the proof of Theorem \ref{thm:global model} is to prove a local version with the additional assumption that the potential $V$ is small in some dispersive type norm.

\begin{proposition}\label{prop: model with small wave}
Let $0 \les  s \les \ell + 2$ and $\ell \g \frac{d}{2}-2$ with $(s, \ell) \not = (\frac{d}{2}, \frac{d}{2}-2)$. Let $\beta = \max\{\frac{d-4}{2}, s-1\}$ and define $a=a^*(s,\ell)$ as in \eqref{eqn:choice of a,b}. There exists $\epsilon>0$ and $C>0$  such that if $0\in I \subset \RR$ is an open interval, and
        $$ f \in H^s(\RR^d), \qquad F \in N^{s,a, 0}(I), \qquad V\in W^{\ell, a, \beta}(I), \qquad \|V \|_{W^{\ell, a,\beta}(I) + L^2_t W^{s,d}_x(I\times \RR^d)} < \epsilon,$$
then the Cauchy problem
        $$ ( i\p_t + \Delta) u = \Re(V) u + F, \qquad u(0) =f $$
has a unique solution $u \in C(I, H^s(\RR^d))\cap L^2_t L^{2^*}_x(I\times \RR^d)$ and we have the bound
        $$ \| u \|_{S^{s,a,0}(I)} \les C \big( \| f\|_{H^s}  + \|F \|_{N^{s,a,0}(I)}\big). $$
Moreover, writing $I=(T_-, T_+)$ with $-\infty\les T_- < T_+ \les \infty$, there exists $f_\pm \in H^s_x$ such that
        $$ \lim_{t\to T_\pm} \| e^{-it\Delta} u(t) - f_\pm \|_{H^s_x} = 0. $$
\end{proposition}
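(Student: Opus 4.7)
The strategy is a Picard iteration in $S^{s,a,0}(I)$, viewing $\Re(V)u$ as a small linear perturbation of the free Schr\"odinger evolution. By Duhamel's formula the Cauchy problem is equivalent to the fixed-point equation
\[
u = \Phi(u) := e^{it\Delta} f + \mc{I}_0\bigl[\Re(V)u + F\bigr],
\]
and the plan is to apply Banach's fixed-point theorem to $\Phi$ on a closed ball in $S^{s,a,0}(I)$.

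Two ingredients feed the contraction estimate. The energy inequality of Lemma \ref{lem:energy ineq} yields
\[
\|e^{it\Delta}f\|_{S^{s,a,0}(I)} + \|\mc{I}_0[G]\|_{S^{s,a,0}(I)} \lesssim \|f\|_{H^s} + \|G\|_{N^{s,a,0}(I)},
\]
while the localised bilinear bound of Corollary \ref{cor:local schro bi} yields
\[
\|\mc{I}_0[\Re(V)\,u]\|_{S^{s,a,0}(I)} \leq C \|V\|_{W^{\ell,a,\beta}(I) + L^2_t W^{s,d}_x(I\times\RR^d)} \|u\|_{S^{s,a,0}(I)}.
\]
Choosing $\epsilon$ as the reciprocal of twice the product of the constants above (which are independent of $I$), $\Phi$ becomes a $\tfrac12$-contraction on the ball in $S^{s,a,0}(I)$ of radius a fixed multiple of $\|f\|_{H^s} + \|F\|_{N^{s,a,0}(I)}$, producing a unique fixed point $u$ with the advertised norm bound. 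Continuity $u \in C(I, H^s)$ is the last clause of Lemma \ref{lem:energy ineq}, and membership in $L^2_t L^{2^*}_x$ follows (with the loss from \eqref{eqn:stri with loss} when $a>0$) directly from the definition of $S^{s,a,0}$. The scattering statement follows from Lemma \ref{lem:scattering} applied to $G = \Re(V)u + F \in N^{s,a,0}(I)$, which produces $f_\pm \in H^s$ together with the claimed convergence.

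The one delicate step is promoting uniqueness from the contraction ball in $S^{s,a,0}(I)$ to the wider class $C(I, H^s)\cap L^2_t L^{2^*}_x(I\times\RR^d)$. Given any candidate $\tilde u$ in this class, the plan is to pick a short subinterval $I'\ni 0$ on which, by absolute continuity, each component of the mixed norm $\|V\|_{W^{\ell,a,\beta}(I') + L^2_t W^{s,d}_x(I'\times\RR^d)}$ is small; use the equation $(i\partial_t+\Delta)\tilde u = \Re(V)\tilde u + F$ together with Corollary \ref{cor:local schro bi} and the Strichartz control on $\tilde u$ to deduce $\tilde u \in S^{s,a,0}(I')$; then apply the contraction uniqueness on $I'$ and chain the pieces using the decomposability estimate of Lemma \ref{lem:decomposability} to cover all of $I$. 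This bootstrap is where I expect the main technical effort to lie.
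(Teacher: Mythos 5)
Your existence, norm bound, continuity and scattering steps are exactly the paper's argument: a Picard/contraction scheme in $S^{s,a,0}(I)$ built from Lemma \ref{lem:energy ineq} and Corollary \ref{cor:local schro bi}, with the scattering limits obtained from Lemma \ref{lem:scattering} applied to $G=\Re(V)u+F\in N^{s,a,0}(I)$ (after extending $G$ to $\RR$). That part is fine.

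The gap is in your plan for uniqueness in the wider class $C(I,H^s)\cap L^2_tL^{2^*}_x$. As sketched, the bootstrap is circular: Corollary \ref{cor:local schro bi} bounds $\|\mc{I}_0[\Re(V)\tilde u]\|_{S^{s,a,0}(I')}$ by $\|\tilde u\|_{S^{s,a,0}(I')}$, which is precisely the membership you are trying to establish, and the hypothesis on $\tilde u$ gives only $L^2_tL^{2^*}_x$ \emph{without} $s$ derivatives, so you cannot even control $\|\Re(V)\tilde u\|_{N^{s,a,0}}$ through the $L^2_tW^{s,d}_x$ part of $V$ (that product estimate needs $\tilde u\in L^\infty_tH^s$ paired with $V\in L^2_tW^{s,d}_x$, but the $W^{\ell,a,\beta}$ part of $V$ requires the full $S^{s,a,0}$ norm of $\tilde u$ in Theorem \ref{thm:main bilinear schro}). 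Moreover, when $a>0$ the temporal weights in $S^{s,a,0}$ do not follow from any bound on $\tilde u$ available in the uniqueness class. The paper avoids all of this: no upgrade of $\tilde u$ to $S^{s,a,0}$ is attempted. Instead, the difference $w=u-\tilde u$ lies in $L^\infty_tL^2_x\cap L^2_tL^{2^*}_x$, solves $w=\mc{I}_0[\Re(V)w]$ with $w(0)=0$, and the double endpoint Strichartz estimate combined with H\"older and Sobolev embedding gives
\[
 \|w\|_{L^\infty_tL^2_x\cap L^2_tL^{2^*}_x(I\times\RR^d)} \lesa \|V\|_{(L^\infty_tL^2_x+L^2_tL^d_x)(I\times\RR^d)}\,\|w\|_{L^\infty_tL^2_x\cap L^2_tL^{2^*}_x(I\times\RR^d)} \lesa \epsilon\,\|w\|_{L^\infty_tL^2_x\cap L^2_tL^{2^*}_x(I\times\RR^d)},
\]
since $\ell\g\frac{d-4}{2}\g0$ and $s\g0$ give $W^{\ell,a,\beta}(I)+L^2_tW^{s,d}_x\hookrightarrow L^\infty_tL^2_x+L^2_tL^d_x$; for $\epsilon$ small this forces $w=0$ on all of $I$, with no interval decomposition or Lemma \ref{lem:decomposability} needed. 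You should replace your bootstrap with this $L^2$-level argument (or, if you insist on the bootstrap, supply the missing mechanism that produces $S^{s,a,0}$ control of $\tilde u$ from the equation, which your sketch does not do).
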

\begin{proof}
This is a direct application of Lemma \ref{lem:energy ineq}, Lemma \ref{lem:scattering}, and Corollary \ref{cor:local schro bi}. Define the sequence $u_j \in S^{s,a,0}(I)$ for $j\g 1$ by solving
        $$ (i\p_t + \Delta)u_j = \Re(V) u_{j-1} + F, \qquad u_j(0)=f$$
and let $u_0 = 0$. An application of Corollary \ref{cor:local schro bi} together with the smallness assumption on $V$ implies that
        $$ \| u_j \|_{S^{s, a, 0}(I)} \lesa \| f \|_{H^s} + \epsilon \| u_{j-1} \|_{S^{s,a,0}(I)} + \| F \|_{N^{s,a,0}(I)}$$
and
        $$ \| u_j - u_{j-1} \|_{S^{s,a,0}(I)} \lesa  \epsilon \| u_{j-1} - u_{j-2} \|_{S^{s,a,0}(I)}$$
Thus provided $\epsilon>0$ is sufficient small (depending only on the constant in Corollary \ref{cor:local schro bi}), the sequence $u_j$ is a Cauchy sequence and hence converges to a (unique) solution $u\in S^{s,a,0}(I)$. Uniqueness in the larger space $L^\infty_t L^2_x \cap L^2_t L^{2^*}_x$ follows by standard arguments from the Strichartz estimate
        \begin{align*}
            \| \mc{I}_0[\Re(V) u] \|_{L^\infty_t L^2_x \cap L^2_t L^{2^*}_x(I\times \RR^d)}
                    &\lesa \| \Re(V) u \|_{L^2_t L^{2_*}_x(I\times \RR^d)} \\
                    &\lesa \| V \|_{(L^\infty_t L^2_x +  L^2_t L^d_x)(I\times \RR^d)} \| u \|_{L^\infty_t L^2_x \cap L^2_t L^{2^*}_x(I\times \RR^d)}\\
                     &\lesa \| V \|_{W^{\ell, a,\beta}(I) + L^2_t W^{s,d}_x(I\times \RR^d)} \| u \|_{L^\infty_t L^2_x \cap L^2_t L^{2^*}_x(I\times \RR^d)} .
        \end{align*}
Finally, to prove the existence of the limits $\lim_{t \to T_{\pm}} e^{-it\Delta} u(t)$, it suffices to show that $e^{-it\Delta} u$ is a Cauchy sequence as $t \to T_+$. To this end, we first observe that by Corollary \ref{cor:local schro bi} we have $G = \Re(V) u + F \in N^{s,a,0}(I)$. Let $G'\in N^{s,a,0}$ be any extension of $G$ from $I$ to $\RR$. Then for any $t, t' \in I$
    \begin{align*}
        \| e^{-it\Delta} u(t) - e^{-it'\Delta} u(t') \|_{H^s} = \| e^{-it\Delta}\mc{I}_0[G](t) - e^{-it'\Delta} \mc{I}_0[G](t') \|_{H^s}
        &= \| e^{-it\Delta}\mc{I}_0[G'](t) - e^{-it'\Delta} \mc{I}_0[G'](t') \|_{H^s} \\
        &= \Big\| \int_{t'}^t e^{-is\Delta} G'(s) ds \Big\|_{H^s}
    \end{align*}
and therefore, an application of Lemma \ref{lem:energy ineq} and Lemma \ref{lem:scattering} implies that $e^{-it\Delta} u(t)$ is a Cauchy sequence as required.
\end{proof}

To apply the previous proposition, we need to decompose $\RR$ into intervals on which $V_L$ is small. This exploits the dispersive properties of the free wave $V_L=e^{it|\nabla|}g$. More precisely, we have the  following minor variation of \cite[Lemma 4.1]{Bejenaru2015}.

\begin{lemma}[{\cite[Lemma 4.1]{Bejenaru2015}}]\label{lem:interval decom}
Let $\ell, s, a\g0$, $\epsilon>0$, and $V_L=e^{it|\nabla|} g \in L^\infty_t H^\ell_x$. Then there exists a finite collection of open intervals $(I_j)_{j=1,\dots, N}$ such that $\RR = \cup_{j=1}^N I_j$, $\min |I_j \cap I_{j+1}|>0$, and
        $$ \sup_{j=1,\dots, N} \| V_L \|_{W^{\ell, a, \beta}(I_j) + L^2_t W^{s,d}_x(I_j \times \RR^d)} < \epsilon. $$
\end{lemma}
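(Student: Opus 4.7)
My approach is to split $g$ into a frequency-truncated ``smooth'' part and a small $H^\ell$-remainder, and to control each via a different component of the sum norm $W^{\ell,a,\beta}+L^2_tW^{s,d}_x$.

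First, using dominated convergence, fix dyadic thresholds $0<\delta<\Lambda$ and write $g=g_S+g_R$ with $g_S:=(P_{\les\Lambda}-P_{\les\delta})g$ supported in spatial frequencies $\delta\lesssim|\xi|\lesssim\Lambda$, and $\|g_R\|_{H^\ell}<\epsilon/(2C)$ for a constant $C$ fixed below. Set $V_S:=e^{it|\nabla|}g_S$ and $V_R:=e^{it|\nabla|}g_R$, so that $V_L=V_S+V_R$. For each $\mu\in 2^\NN$, the spacetime Fourier support of $P_\mu V_R$ lies on $\{\tau=-|\xi|,\,|\xi|\approx\mu\}$, so $(i\p_t+|\nabla|)V_R\equiv 0$, $P^{(t)}_{\ll\mu^2}V_{R,\mu}=V_{R,\mu}$, and $(\mu+|\p_t|)^aV_{R,\mu}\approx\mu^aV_{R,\mu}$. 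Dyadic summation then yields $\|V_R\|_{W^{\ell,a,\beta}(I)}\les\|V_R\|_{W^{\ell,a,\beta}(\RR)}\les C\|g_R\|_{H^\ell}<\epsilon/2$ for every open interval $I\subset\RR$.

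Next, since $g_S$ is smooth and frequency-localized away from both $0$ and $\infty$, the standard dispersive estimate for the half-wave gives $\|e^{it|\nabla|}g_S\|_{L^\infty_x}\les C_{g_S}\langle t\rangle^{-(d-1)/2}$ with $C_{g_S}$ depending only on $g_S$. Interpolating with $\|V_S(t)\|_{L^2_x}=\|g_S\|_{L^2_x}$, and noting that $\langle\nabla\rangle^sV_S=e^{it|\nabla|}\langle\nabla\rangle^sg_S$ is again smooth and frequency-localized, one obtains
\[\|\langle\nabla\rangle^sV_S(t)\|_{L^d_x}\les C'_{g_S}\langle t\rangle^{-(d-1)(d-2)/(2d)}.\]
The exponent $(d-1)(d-2)/d$ strictly exceeds $1$ exactly for $d\g 4$, so squaring and integrating in $t$ yields the crucial global bound $V_S\in L^2_tW^{s,d}_x(\RR\times\RR^d)$.

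Given this, the partition is elementary. Let $F(t):=\int_{-\infty}^t\|V_S(\tau)\|_{W^{s,d}_x}^2\,d\tau$, a continuous non-decreasing function from $0$ to $M:=\|V_S\|_{L^2_tW^{s,d}_x}^2<\infty$. Choose $N\in\NN$ with $M/N<(\epsilon/4)^2$, find $t_1<\dots<t_{N-1}$ with $F(t_j)=jM/N$ by the intermediate value theorem, and set $t_0:=-\infty$, $t_N:=+\infty$, $I_j:=(t_{j-1}-\eta,t_j+\eta)$ for small $\eta>0$ (with the natural convention $\pm\infty\pm\eta=\pm\infty$). Then $\RR=\bigcup_jI_j$ with $|I_j\cap I_{j+1}|\g 2\eta>0$, and by absolute continuity of the Lebesgue integral one can shrink $\eta$ so that $\|V_S\|_{L^2_tW^{s,d}_x(I_j\times\RR^d)}<\epsilon/2$ for every $j$. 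The triangle inequality for the sum norm then gives the claimed bound on each $I_j$. The substantive step is the global $L^2_t$-integrability in the second paragraph: the dispersive rate $\langle t\rangle^{-(d-1)/2}$ combined with interpolation up to $L^d_x$ produces decay that just beats $\langle t\rangle^{-1/2}$ at infinity precisely when $d\g 4$, which is the standing hypothesis of the paper; everything else is density and measure-theoretic bookkeeping.
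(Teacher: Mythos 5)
Your overall architecture matches the paper's: split $V_L$ into a piece that is small in $H^\ell$ (absorbed by the free-wave bound for $\|\cdot\|_{W^{\ell,a,\beta}}$, i.e.\ Lemma \ref{lem:energy ineq wave}) plus a ``nice'' piece shown to lie globally in $L^2_t W^{s,d}_x(\RR^{1+d})$, after which the finite partition with small norms on each $I_j$ is just absolute continuity of the integral, exactly as you describe. The $V_R$ part and the interval bookkeeping are fine (the identity $P^{(t)}_{\ll\mu^2}V_{R,\mu}=V_{R,\mu}$ is slightly off at $\mu=1$, but harmless since $(\mu+|\p_t|)^a\approx\mu^a$ on the relevant temporal support anyway).

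The genuine gap is in your choice of the ``nice'' part. Taking $g_S=(P_{\les\Lambda}-P_{\les\delta})g$ localizes only in frequency, and a band-limited $H^\ell$ function has no spatial integrability beyond $L^2$; the dispersive estimate $\|e^{it|\nabla|}g_S\|_{L^\infty_x}\lesa \langle t\rangle^{-(d-1)/2}$ requires an $L^1$-type (or Besov-over-$L^1$) norm of the data, so your constant $C_{g_S}$ is in general infinite and the interpolation step collapses. This is not merely a technical omission: in the crucial case $d=4$ the pair $(2,4)$ is not wave-admissible (admissibility forces $q\g 8/3$ at $r=4$), so the conclusion $e^{it|\nabla|}g_S\in L^2_tL^4_x(\RR^{1+4})$ genuinely fails for general band-limited $L^2$ data (Knapp examples plus closed graph), and no Strichartz fallback can rescue frequency truncation alone. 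The paper avoids this by approximating $g$ in $H^\ell$ by $g_2\in C^\infty_0(\RR^d)$, i.e.\ truncating in \emph{physical} space as well, for which the dispersive estimate is legitimate; with that substitution your exponent computation $(d-1)(d-2)/d>1$ for $d\g4$ and the rest of your argument go through.
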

\begin{proof}
Decompose $g=g_1 + g_2$ where $g_2 \in C^\infty_0(\RR^d)$ and $\| g_1 \|_{H^\ell} < \epsilon$. Since $g_2$ is smooth and compactly supported, the dispersive estimate for the free wave equation gives $e^{it|\nabla|} g_2 \in L^2_t W^{s,d}_x(\RR^{1+d})$ and hence we can find a collection of open intervals $(I_j)_{j=1,\dots, N}$ such that $\RR = \cup_{j=1}^N I_j$, $\min |I_j \cap I_{j+1}|>0$, and
        $$ \sup_{j=1,\dots, N} \| e^{it|\nabla|} g_2 \|_{L^2_t W^{s,d}_x(I_j \times \RR^d)} < \epsilon. $$
On the other hand, the definition of the norm $W^{\ell,a, \beta}$ implies that
        $$ \| e^{it|\nabla|} g_1 \|_{W^{\ell,a, \beta}(I_j)} \les \| e^{it|\nabla|} g_1 \|_{W^{\ell,a, \beta}} \lesa \| g_1 \|_{H^\ell} \lesa \epsilon. $$
Therefore, for every $j=1, \dots, N$, we have
        $$ \| V_L \|_{W^{\ell, a, \beta}(I_j) + L^2_t W^{s,d}_x(I_j \times \RR^d)} \les \| e^{it|\nabla|} g_1 \|_{W^{\ell, a, \beta}(I_j)} + \| e^{it|\nabla|} g_2 \|_{L^2_t W^{s,d}_x(I_j \times \RR^d)} \lesa \epsilon. $$
\end{proof}

The proof of Theorem \ref{thm:global model} now follows by repeatedly applying Proposition \ref{prop: model with small wave} together with the decomposability property in Lemma \ref{lem:decomposability}.

\begin{proof}[Proof of Theorem \ref{thm:global model}]
Let $\epsilon>0$ and suppose that
        $$ \| V - V_L \|_{W^{\ell, a, \beta}} < \epsilon. $$
An application of Lemma \ref{lem:interval decom} gives finite number of open intervals $(I_j)_{j=1, \dots, N}$ and points $t_j \in I_{j-1} \cap I_j$ such that $ I = \cup_{j=1}^N I_j$, $\min |I_j \cap I_{j+1} |>0$, and
       $$ \sup_{j=1,\dots, N} \| V \|_{W^{\ell, a, \beta}(I_j) + L^2_t W^{s,d}_x(I_j \times \RR^d)} \les \| V - V_L \|_{W^{\ell, a, \beta}(I)} +  \sup_{j=1,\dots, N} \| V_L \|_{W^{\ell, a, \beta}(I_j) + L^2_t W^{s,d}_x(I_j \times \RR^d)} < 2\epsilon. $$
Assuming $\epsilon>0$ is sufficiently small, Proposition \ref{prop: model with small wave} gives a (unique) solution $u \in C(I_j, H^s)\cap L^2_t L^{2^*}_x(I_j\times \RR^d)$ on the interval $0\in I_j$ to the Cauchy problem
    \begin{equation}\label{eqn:thm gwp model:extended eqn}
        (i\p_t + \Delta) u= \Re(V) u +F, \qquad u(0) =f
    \end{equation}
such that
    $$ \| u \|_{S^{s,a,0}(I_j)} \lesa \| f \|_{H^s} +  \| F \|_{N^{s,a,0}(I_j)} \lesa \| f \|_{H^s} +  \| F \|_{N^{s,a,0}(I)}. $$
Taking new data $u(t_j)$ and $u(t_{j-1})$, and again applying Proposition \ref{prop: model with small wave}, we get a unique solution
    $$u \in C( I_{j-1} \cup I_j \cup I_{j+1}, H^s)\cap L^2_t L^{2^*}_x(I_{j-1} \cup I_j \cup I_{j+1}\times \RR^d)$$
with
    $$ \sup_{k=j-1, j, j+1} \| u \|_{S^{s,a,0}(I_k)} \lesa \| f \|_{H^s} + \| F \|_{N^{s,a,0}(I)}. $$
Continuing in this manner, after at most $N$ steps, we obtain a unique solution $ u \in C(I, H^s) \cap L^2_t L^{2^*}_x(I \times \RR^d)$ such that
    $$ \|u \|_{S^{s,a,0}(I)} \lesa_N \sup_{j=1, \dots, N} \| u \|_{S^{s,a,0}(I_j)} \lesa_N \| f\|_{H^s} + \| F \|_{N^{s,a,0}(I)} $$
where the first inequality is a consequence of Lemma \ref{lem:decomposability}. Finally, to show that the claimed limits as $t \to \sup I$ and $t \to \inf I$ exist, we simply repeat the argument at the end of the proof of Proposition \ref{prop: model with small wave}.
\end{proof}

\subsection{Local and small data global results for the Zakharov system}\label{subsec:local-smallglobal}

We first consider the non-endpoint case $s>\frac{d-3}{2}$.

\begin{theorem}[LWP and small data GWP: non-endpoint case]\label{thm:gwp nonendpoint}
  Let $d\g 4$ and suppose that $(s,\ell)$ satisfies the conditions \eqref{eqn:cond on s l} and $s>\frac{d-3}{2}$. Let $a=a^*(s,\ell)$ and $b=b^*(s, \ell)$ as in \eqref{eqn:choice of a,b}. For some $0<\theta<1$ and any $g_* \in H^\ell(\R^d)$ there exists $\epsilon>0$, such that if $f_*\in H^s(\R^d)$ satisfies
  \begin{equation}\label{eq:cond-fg}
        \| f_* \|_{H^s}^{1-\theta} \| e^{it\Delta}f_* \|_{L^2_t L^{2^*}_x(I\times \RR^d)}^{\theta} < \epsilon,  \text{ for an interval }0\in I \subset \RR,
      \end{equation}
      then for all $(f,g)$ in \[ D_\epsilon(f_*,g_*):=\big\{(f,g)\in H^s\times H^\ell:\;
\|f-f_*\|_{H^s}<\epsilon, \quad \|g-g_*\|_{H^\ell}<\epsilon\big\},
      \]
      there exists a unique solution $(u, V) \in S^{s, a, b}(I) \times W^{\ell, a, s -\frac{1}{2}}(I)$ to \eqref{eq:Zakharov 1st order}.
The flow map $$ H^s(\RR^d)\times H^\ell(\RR^d) \supset D \ni (f,g)\mapsto (u,V)\in S^{s,a, b}(I)\times W^{\ell, a, s-\frac{1}{2}}(I)$$
is real-analytic, where $D=D_\epsilon(f_*,g_*)$ is the open bi-disc defined above. Moreover, if $I= \RR$, then there exists $(f_\pm, g_\pm) \in H^s \times H^\ell$ such that
$$ \lim_{t\to \pm \infty} \Big( \big\| u(t) - e^{it\Delta} f_\pm \big\|_{H^s} + \| V(t) - e^{it|\nabla|} g_\pm \|_{H^\ell} \Big) = 0. $$
\end{theorem}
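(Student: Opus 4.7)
The plan is to set up a contraction for the Schr\"odinger component $u$, treating the free wave evolution of the reference datum $g_*$ as a (possibly large) linear potential. Write $V_*:=e^{it|\nabla|}g_*$ and decompose the full wave component as $V=V_*+W$ with $W=e^{it|\nabla|}(g-g_*)-\mc{J}_0[|\nabla||u|^2]$. The Schr\"odinger equation in \eqref{eq:Zakharov 1st order} then becomes $(i\p_t+\Delta-\Re(V_*))u=\Re(W)u$, and its mild formulation reads
\[
 u \;=\; \mc{U}_{V_*}(\cdot,0)\,f \;+\; \mc{I}_{V_*}\bigl[\Re(W)u\bigr].
\]
Applying Theorem \ref{thm:global model} with $V=V_*$ (its smallness hypothesis $\|V-V_L\|_{W^{\ell,a,\beta}}<\epsilon$ being vacuous in this case) supplies bounded linear operators $\mc{U}_{V_*}(\cdot,0):H^s\to S^{s,a,0}(I)$ and $\mc{I}_{V_*}:N^{s,a,0}(I)\to S^{s,a,0}(I)$ with norms controlled by a constant $C(g_*)$ depending only on the reference wave datum, together with the corresponding endpoint Strichartz bound on $L^2_tL^{2^*}_x(I\times\RR^d)$.

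Next I would iterate the map $\Phi(u;f,g):=\mc{U}_{V_*}(\cdot,0)f+\mc{I}_{V_*}[\Re(W[u,g])u]$ on a ball
\[
 \Omega\;:=\;\bigl\{\,u\in S^{s,a,0}(I)\;:\;\|u\|_{S^{s,a,0}(I)}\les K,\ \|u\|_{L^2_tL^{2^*}_x(I\times\RR^d)}\les K\delta\,\bigr\},
\]
with $K\sim_{g_*}\|f_*\|_{H^s}$ and $\delta\sim \|e^{it\Delta}f_*\|_{L^2_tL^{2^*}_x(I\times\RR^d)}+\epsilon$, arranged so that $K^{2-2\theta}\delta^{2\theta}$ is made arbitrarily small by the smallness condition \eqref{eq:cond-fg}. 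The Schr\"odinger nonlinearity is bounded by Corollary \ref{cor:local schro bi},
\[
 \|\mc{I}_{V_*}[\Re(W)u]\|_{S^{s,a,0}(I)}\lesa_{g_*} \|W\|_{W^{\ell,a,s-\frac12}(I)+L^2_tW^{s,d}_x(I\times\RR^d)}\|u\|_{S^{s,a,0}(I)},
\]
while the nonlinear wave piece $\mc{J}_0[|\nabla||u|^2]$ entering $W$ is placed in $W^{\ell,a,s-\frac12}(I)$ via Corollary \ref{cor:loc bi wave}, which supplies the crucial $\theta$-gain
\[
 \|\mc{J}_0[|\nabla||u|^2]\|_{W^{\ell,a,s-\frac12}(I)}\lesa \|u\|_{S^{s,a,0}(I)}^{2(1-\theta)}\|u\|_{L^2_tL^{2^*}_x(I\times\RR^d)}^{2\theta}.
\]
The non-endpoint hypothesis $s>\tfrac{d-3}{2}$ enters precisely here to secure $\theta>0$ in this interpolation. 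Together with $\|g-g_*\|_{H^\ell}<\epsilon$ this closes both the self-mapping and the contraction on $\Omega$ in the $S^{s,a,0}(I)$ norm, while the $L^2_tL^{2^*}_x$ ball is preserved by the endpoint Strichartz control inherent in Theorem \ref{thm:global model} applied to the source $\Re(W)u$.

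The remaining steps are essentially routine. If $s\les\ell$ (so that $a=0$ by \eqref{eqn:choice of a,b}) I would revisit the fixed-point identity and apply Theorem \ref{thm:main bilinear schro} with $b=b^*(s,\ell)>0$ to upgrade $u$ to $S^{s,a,b}(I)$; if $s>\ell$ then $b=0$ and no upgrade is needed. Setting $V:=e^{it|\nabla|}g-\mc{J}_0[|\nabla||u|^2]$ and invoking Theorem \ref{thm:main bilinear wave} gives $V\in W^{\ell,a,s-\frac12}(I)$, hence $V\in C(I,H^\ell)$ by Lemma \ref{lem:energy ineq wave}. Real-analytic dependence on $(f,g)$ over the open bidisc $D_\epsilon(f_*,g_*)$ follows from the implicit function theorem in Banach spaces, since $\Phi(u;f,g)$ is polynomial (cubic) in $u$ and linear in $(f,g)$, composed with bounded operators depending only on $g_*$; and when $I=\RR$ the scattering states $f_\pm,g_\pm$ are produced by Lemma \ref{lem:scattering} and its half-wave analogue obtained likewise from Lemma \ref{lem:energy ineq wave}. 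The main technical obstacle is in the middle step above: the ball $\Omega$ tracks two inequivalent quantities, the full $S^{s,a,0}(I)$ norm and a derivative-free Strichartz norm, and both must be preserved simultaneously. The $\theta$-interpolation of Corollary \ref{cor:loc bi wave} is tailored precisely to this imbalance, and the mixed form $\|f\|_{H^s}^{2-2\theta}\|e^{it\Delta}f\|_{L^2_tL^{2^*}_x}^{2\theta}$ in \eqref{eq:cond-fg} is arranged so that both balls can be closed with the same $\epsilon$.
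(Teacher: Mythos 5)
Your overall strategy coincides with the paper's: treat $V_*=e^{it|\nabla|}g_*$ as a large free-wave potential handled by Theorem \ref{thm:global model}, run a contraction that tracks both the $S^{s,a,0}(I)$ norm and a derivative-free Strichartz norm, use the $\theta$-interpolation of Corollary \ref{cor:loc bi wave} to exploit \eqref{eq:cond-fg}, then upgrade regularity, get analyticity from the implicit function theorem and scattering from Lemma \ref{lem:scattering}. However, there is a genuine gap in your contraction step: you place the quadratic wave piece in $W^{\ell,a,s-\frac12}(I)$ with the \emph{full} wave regularity $\ell$, citing Corollary \ref{cor:loc bi wave}. That corollary requires the strict inequalities $a<s-\ell$, $2a<2s-\ell-\tfrac{d-2}{2}$, and $\beta<\min\{s,2s-\tfrac{d-2}{2}-a\}$, and these fail on part of the range of the theorem: whenever $s\les\ell$ (admissible, since \eqref{eqn:cond on s l} allows $\ell-1\les s$) one has $a=a^*(s,\ell)=0$ and $a<s-\ell$ is violated, and on the boundary segment $2s=\ell+\tfrac{d-2}{2}$ with $\tfrac{d-4}{2}<\ell<\tfrac{d-2}{2}$ the condition $2a<2s-\ell-\tfrac{d-2}{2}$ fails. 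Nor can another estimate in the paper rescue the step inside your iteration space: with inputs measured only in $S^{s,a,0}$ (i.e.\ $b=0$), Theorem \ref{thm:main bilinear wave} requires $a-b\les s-\ell$, so wave output regularity $\ell>s$ is simply out of reach; the extra $b$-weight on the high-modulation part of $u$ is exactly what generates it. The paper's fix is to run the fixed point with the reduced wave regularity $\tilde\ell=\min\{\ell,s-\tfrac12\}$ (for which Corollary \ref{cor:loc bi wave} applies throughout the non-endpoint region, and which still suffices for the Schr\"odinger bilinear estimate), and only afterwards, when $s<\ell+\tfrac12$, to upgrade: first $u\in S^{s,0,b}(I)$ with $b=b^*>0$ via the free Duhamel identity and Theorem \ref{thm:main bilinear schro}, then $V\in W^{\ell,0,s-\frac12}(I)$ via Theorem \ref{thm:main bilinear wave} with $S^{s,0,b}$ inputs. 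Your closing paragraph does contain the $u$-upgrade, but it comes too late: the self-mapping and difference bounds of your contraction already rely on the unavailable $W^{\ell,a,s-\frac12}$ estimate.

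A secondary, fixable omission concerns the linear term. You take $\mc{U}_{V_*}(\cdot,0)f$ and assign the Strichartz ball the radius $K\delta$ with $\delta\sim\|e^{it\Delta}f_*\|_{L^2_tL^{2^*}_x(I\times\RR^d)}+\epsilon$, but Theorem \ref{thm:global model} only bounds $\|\mc{U}_{V_*}(\cdot,0)f\|_{L^2_tL^{2^*}_x}$ by $C(g_*)\|f\|_{H^s}$, which is not small. To transfer smallness from the free evolution you need an extra step, e.g.\ the identity $\mc{U}_{V_*}(t,0)f=e^{it\Delta}f+\mc{I}_{V_*}[\Re(V_*)e^{it\Delta}f]$ combined with the $L^2_tL^{2^*}_x$-boundedness of $G\mapsto\mc{I}_{V_*}[\Re(V_*)G]$ (the analogue of the paper's estimate \eqref{eqn:thm gwp nonend:schro est L2}); the resulting constant depending on $g_*$ is harmless because $\epsilon$ may depend on $g_*$. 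The paper's iteration map is written with $e^{it\Delta}f$ as its linear term, so the smallness of the linear term's Strichartz norm is immediate in its formulation.
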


\begin{proof}
  Fix $(s, \ell)$ satisfying the conditions \eqref{eqn:cond on s l} and $s>\frac{d-3}{2}$, and define $a= a^*(s, \ell)$ and $b=b^*(s, \ell)$ as in \eqref{eqn:choice of a,b}. Let $\tilde{\ell} = \min\{ \ell, s-\frac{1}{2}\}$ and define $V_L = e^{it|\nabla| } g_*$ to the free wave evolution of $g_* \in H^\ell$, and similarly $u_L=e^{it \Delta}f_* $ for $f_*\in H^s$ in case of the free Schr\"odinger evolution.

  Let us recall that $\mc{I}_{V_L}[F]$ denotes the solution to the inhomogeneous Schr\"odinger equation
        $$ \big(i\p_t + \Delta - \Re(V_L)\big) \psi = F, \qquad \psi(0) = 0 $$
and similarly, $\mc{J}_0[G]$ denotes the solution to the inhomogeneous wave equation
        $$ (i\p_t + |\nabla|)\phi = G, \qquad \phi(0) = 0. $$
We claim there exists $0< \theta < 1$ such that
        \begin{align}
          \big\| \mc{I}_{V_L}[ \Re(\phi) \psi ] \big\|_{S^{s,a,0}(I)} &\lesa_{g_*}  \| \phi \|_{W^{\tilde{\ell}, a, s-\frac{1}{2}}(I)} \| \psi \|_{S^{s,a,0}(I)}
           \label{eqn:thm gwp nonend:schro est}\\
          \big\| \mc{I}_{V_L}[ \Re(\phi) \psi] \big\|_{L^2_t L^{2^*}_x(I\times \RR^d)} &\lesa_{g_*} \| \phi \|_{W^{\tilde{\ell}, 0, 0}(I)} \| \psi \|_{L^2_t L^{2^*}_x(I\times \RR^d)}
          \label{eqn:thm gwp nonend:schro est L2}\\
         \big\| \mc{J}_0\big[ |\nabla|(\overline{\psi} \varphi)\big] \big\|_{W^{\tilde{\ell}, a,  s-\frac{1}{2}}(I)}
          &\lesa \Big( \| \psi \|_{L^2_t L^{2^*}_x(I\times \RR^d)} \| \varphi \|_{L^2_t L^{2^*}_x(I\times \RR^d)}\Big)^\theta \Big( \| \psi \|_{S^{s, a, 0}(I)} \| \varphi \|_{S^{s, a,0}(I)}\Big)^{1-\theta}.
          \label{eqn:thm gwp nonend:wave est}
         \end{align}
The estimate \eqref{eqn:thm gwp nonend:schro est} follows from Theorem \ref{thm:global model} and Theorem \ref{thm:main bilinear schro}. To prove \eqref{eqn:thm gwp nonend:schro est L2}, we apply Remark \ref{rmk:str-control} and observe that via the Littlewood-Paley square function estimate and Bernstein's inequality
    \begin{align*}
      \big\| \mc{I}_{V_L}[F] \big\|_{L^2_t L^{2^*}_x(I\times \RR^d)} &\lesa \Big( \sum_{\lambda \in 2^\NN} \big\| P_\lambda \mc{I}_{V_L}[F] \big\|_{L^2_t L^{2^*}_x(I\times \RR^d)}^2 \Big)^\frac{1}{2} \\
      &\lesa_{g_*} \| F \|_{N^{0,0,0}(I)} \lesa_{g_*} \| F \|_{L^2_t L^{2_*}_x(I\times \RR^d)},
    \end{align*}
 see also \eqref{eqn:N norm chara}.
Therefore
\begin{align*}\big\| \mc{I}_{V_L}[\Re( \phi) \psi] \big\|_{L^2_t L^{2^*}_x(I\times \RR^d)} &\lesa_{g_*} \| \phi \psi \|_{L^2_t L^{2_*}_x} \lesa_{g_*} \| \phi \|_{L^\infty_t L^\frac{d}{2}_x(I\times \RR^d)} \| \psi \|_{L^2_t L^{2^*}_x(I\times \RR^d)} \\
  &\lesa_{g_*} \| \phi \|_{W^{\tilde{\ell}, 0, 0}(I)} \| \psi \|_{L^2_t L^{2^*}_x(I\times \RR^d)}
\end{align*}
and so \eqref{eqn:thm gwp nonend:schro est L2} follows. The final estimate \eqref{eqn:thm gwp nonend:wave est} is a direct application of Corollary \ref{cor:loc bi wave}.

Set $\rho=V-V_L$ and $g^*=g-g_*$. Then the pair $(u, \rho)$ solves
    \begin{align*}
        (i\p_t + \Delta - \Re(V_L)) u&= \Re(\rho)u, \qquad u(0) =f, \\
       (i\p_t +|\nabla|) \rho &= - |\nabla| |u|^2, \qquad \rho(0) = g^*.
    \end{align*}
After noting that $\rho=e^{it|\nabla|}g^*-\mc{J}_0(|\nabla| |u|^2)$ it suffices to find a fixed point $u\in S^{s, a, 0}(I)$ for the map
  \begin{equation*}
    \Phi(f,g; u):=e^{it\Delta}f + \mc{I}_{V_L}\Big( \Re(V_L) e^{it\Delta} f\Big) +\mc{I}_{V_L}\Big(\Re(e^{it|\nabla|}g^*)u\Big)- \mc{I}_{V_L}\Big(\mc{J}_{0}(|\nabla||u|^2) u\Big).
  \end{equation*}
Let $\Lambda = \| e^{it\Delta} f_*\|_{L^2_t L^{2^*}_x(I)} \| f_*\|_{H^s}^{-1}$ ($\Lambda=1$ if $f_*=0$) and define the norm
        $$ \| u \|_{Z} = \inf_{u = u_1 + u_2} \Big( \Lambda^\theta \| u_1 \|_{S^{s,a,0}(I)} + \Lambda^{\theta-1} \| u_1 \|_{L^2_t L^{2^*}_x (I)} + \| u_2 \|_{S^{s, a, 0}(I)} \Big).  $$
Note that $(1+\Lambda^{-\theta})^{-1} \| u \|_{S^{s,a,0}(I)} \les \| u \|_{Z} \les \| u \|_{S^{s,a,0}(I)}$ and hence $\| \cdot \|_{Z}$ is an equivalent norm on $S^{s,a,0}(I)$. Moreover, since
$ a^{1-\theta} b^{\theta} \les \Lambda^{\theta} a + \Lambda^{\theta-1} b$ and $\| u \|_{L^2_t L^{2^*}_x(I)} \lesa \| u \|_{S^{s,a,0}}$, a short computation using the bounds \eqref{eqn:thm gwp nonend:schro est}, \eqref{eqn:thm gwp nonend:schro est L2}, and \eqref{eqn:thm gwp nonend:wave est} implies that
        $$ \| \mc{I}_{V_L}[\Re(\phi) u ] \|_{Z} \lesa_{g_*} \| \phi \|_{W^{\tilde{\ell}, a,  s-\frac{1}{2}}(I)} \| u \|_{Z}, \qquad \| \mc{J}_0[|\nabla| (\overline{u} v)] \|_{W^{\tilde{\ell}, a, s-\frac{1}{2}}(I)} \lesa \| u \|_{Z} \| v \|_{Z}. $$
Moreover, in view of the endpoint Strichartz estimate and the definition of $\Lambda$ we have
       \begin{align*}
            \| e^{it\Delta} f\|_{Z} &\lesa \Lambda^{\theta} \| f_*\|_{H^s} + \Lambda^{\theta -1} \| e^{it\Delta } f_* \|_{L^2_t L^{2^*}_x(I)} + \| f- f_*\|_{H^s} \\
            &= 2\| f_*\|_{H^s}^{1-\theta} \| e^{it\Delta } f_* \|_{L^2_t L^{2^*}_x(I)}^\theta + \| f- f_*\|_{H^s} \lesa \epsilon + \| f- f_*\|_{H^s}.
       \end{align*}
Consequently, for any $(f, g) \in D$ and $u, v\in S^{s, a, 0}(I)$ we see that
        $$ \| \Phi(f,g; u) \|_{Z} \lesa_{g_*} \epsilon + \epsilon \| u \|_{Z} + \| u \|_{Z}^3$$
and
        $$ \| \Phi(f,g; u) -  \Phi(f,g; v)  \|_{Z} \lesa_{g_*} \epsilon \| u - v\|_{Z} + \big( \| u \|_{Z} + \| v \|_{Z}\big)^2 \| u - v\|_{Z}. $$
Let $C=C(g_*)$ denote the largest of the above implicit constants and take $K=\{ u\in S^{s,a,0} \mid \|u\|_{Z} \les 2 C \epsilon \} $. Then provided $\epsilon = \epsilon(g_*)>0$ is chosen sufficiently small, we get unique fixed point $u\in K \subset S^{s,a,0}$.

In addition, as a consequence of the above estimates, for $(f,g)\in D$ and $u \in K$, we have that for any $v \in S^{s,a,0}(I)$, the linear map
$Tv = v - D_v \Phi(f,g;u)$ is a small perturbation of the identity (with respect to the norm $\|\cdot \|_{Z}$), and hence $T$ is a linear homeomorphism onto $S^{s,a,0}(I)$. Furthermore, the map $\Phi$ is real-analytic (as a composition of linear, bi- and trilinear maps over $\R$). If $u[f,g]$ denotes the solution with initial data $(f,g)$, the implicit function theorem \cite[Theorem~15.3]{Deimling85} implies that the flow map $D\ni (f,g)\mapsto u[f,g]\in S^{s,a,0}(I)$ is real-analytic.
Define $V=e^{it|\nabla|}g-\mc{J}_0(|\nabla| |u|^2)$. Estimate \eqref{eqn:thm gwp nonend:wave est} implies that $V\in W^{\tilde{\ell}, a, s-\frac{1}{2}}(I)$ and $(u,V)$ is a solution of \eqref{eq:Zakharov 1st order}. Also,
$D\ni (f,g)\mapsto V[f,g]=e^{it|\nabla|}g-\mc{J}_0(|\nabla| |u[f,g]|^2)\in  W^{\tilde{\ell}, a, s-\frac{1}{2}}(I) $ is a composition of real-analytic maps and therefore real-analytic. In the case $s\geq \ell+\frac12$ we have $\ell=\tilde\ell$ and $b=0$, so this is the claim.

In the remaining case $s<\ell+\frac12$, we have $a=0$. Define $\kappa=\ell$ if $s>\ell$ and $\kappa=s-\frac{1}{2}(1-b)$ if $s\leq \ell$. An application of Theorem \ref{thm:main bilinear schro} gives
        $$ \| \mc{I}_0[ \Re(\phi) \psi ] \|_{S^{s,0,b}(I)} \lesa \| \phi \|_{W^{\kappa, 0, s-\frac{1}{2}}(I)} \| \psi \|_{S^{s,0,0}(I)}$$
while Theorem \ref{thm:main bilinear wave}  implies that
$$ \| \mc{J}_0[|\nabla|(\overline{\psi} \varphi)]\|_{W^{\kappa,0,s-\frac{1}{2}}(I)} \lesa \| \psi \|_{S^{s,0,0}(I)} \| \varphi \|_{S^{s,0,0}(I)}. $$
For $(f,g )\in D$ and the solution $u \in K$ we have
\begin{equation}\label{eq:alt-fp}
u=e^{it\Delta}f+\mc{I}_0(\Re(e^{it|\nabla|}g)u)-\mc{I}_0(\mc{J}_{0}(|\nabla||u|^2)u).
\end{equation}
Thus, we  conclude that
\begin{align*}
  \|u \|_{S^{s,0,b}(I)} &\lesa \|f\|_{H^s}+\|g\|_{H^\ell}\|u\|_{S^{s,0,0}(I)}+\|u\|_{S^{s,0,0}(I)}^3\\
  &\lesa (1 + \| g \|_{H^\ell} +  \|f \|_{H^s}^2) \| f\|_{H^s}.
  \end{align*}
  Equation \eqref{eq:alt-fp} also shows that $D\ni (f,g)\mapsto u[f,g]\in S^{s,0,b}(I)$ is a composition of real-analytic maps, hence real-analytic. Theorem \ref{thm:main bilinear wave} again implies that
  $$ \| \mc{J}_0[|\nabla|(\overline{\psi} \varphi)]\|_{W^{\ell,0,s-\frac{1}{2}}(I)} \lesa \| \psi \|_{S^{s,0,b}(I)} \| \varphi \|_{S^{s,0,b}(I)}. $$
  We conclude
  $$ \| V \|_{W^{\ell, 0, s-\frac{1}{2}}(I)} \lesa \|g\|_{H^\ell}+\big(1 + \|g\|_{H^\ell} + \| f \|_{H^s}^2 \big)^2 \| f\|_{H^s}^2 $$
  and, as above, $D\ni (f,g)\mapsto V[f,g]\in W^{\ell, 0, s-\frac{1}{2}}(I)$ is real-analytic.

 Finally, we remark that if $I=\RR$, then the solution scatters. This follows from an analogous argument to that used in the proof of Theorem \ref{thm:global model} (i.e. show that $u(t)$ forms a Cauchy sequence as $t\to \infty$). It only remains to prove uniqueness in $S^{s,a, b}(I) \times W^{\ell, a, s-\frac{1}{2}}(I)$, but this again a consequence of the estimates proved above.
\end{proof}

We now consider the endpoint case $(s,\ell) = (\frac{d-3}{2}, \frac{d-4}{2})$.

\begin{theorem}[LWP and small data GWP: endpoint case]\label{thm:gwp endpoint}
Let $d\g 4$ and fix $(s,\ell)=(\frac{d-3}{2}, \frac{d-4}{2})$. For any $g_* \in H^\ell$ there exists $\epsilon>0$, such that if $f_* \in H^s$ and $0\in I \subset \RR$ is an interval with
        \begin{equation}\label{eqn:thm gwp endpoint:smallness}
             \| e^{it\Delta}f_* \|_{L^2_t W^{s, 2^*}_x(I\times \RR^d)}^\frac{1}{2} \| f_* \|_{H^s}^\frac{1}{2} < \epsilon,
        \end{equation}
        then for all $(f,g)$ in 
        \[
 D_\epsilon(f_*,g_*):=\{(f,g)\in H^s\times H^\ell: \|f-f_*\|_{H^s}<\epsilon, \quad \|g-g_*\|_{H^\ell}<\epsilon\},
        \]
        there exists a unique solution $u \in C(I, H^s)\cap L^2_t W^{s, 2^*}_x(I\times \RR^d)$, $V\in C(I, H^\ell)$ to \eqref{eq:Zakharov 1st order}. Moreover, $(u,V) \in S^{s,0,0}(I)\times W^{\ell,0,s-\frac12}(I)$ and the flow map $$ H^s(\RR^d)\times H^\ell(\RR^d) \supset D \ni (f,g)\mapsto (u,V)\in S^{s,0, 0}(I)\times W^{\ell, 0, s-\frac{1}{2}}(I)$$
is real-analytic, where $D=D_\epsilon(f_*,g_*)$ is the open bi-disc defined above. If $I= \RR$, then there exists $(f_\pm, g_\pm) \in H^s \times H^\ell$ such that
        $$ \lim_{t\to \pm \infty} \Big( \big\| u(t) - e^{it\Delta} f_\pm \big\|_{H^s} + \| V(t) - e^{it|\nabla|} g_\pm \|_{H^\ell} \Big) = 0. $$
\end{theorem}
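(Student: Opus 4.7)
The proof would mirror that of Theorem \ref{thm:gwp nonendpoint}, but at the critical regularity $(s,\ell)=(\frac{d-3}{2},\frac{d-4}{2})$, where $a=b=0$, the non-endpoint Corollaries \ref{cor:local schro bi}--\ref{cor:loc bi wave} are not available and I would substitute the endpoint bilinear estimates of Propositions \ref{prop:schro est for endpoint}--\ref{prop:wave est end}. Setting $V_L:=e^{it|\nabla|}g_*$ and $g^*:=g-g_*$, a pair $(u,V)\in S^{s,0,0}(I)\times W^{\ell,0,s-\frac12}(I)$ solves \eqref{eq:Zakharov 1st order} iff $V=e^{it|\nabla|}g-\mc{J}_0(|\nabla||u|^2)$ and $u$ is a fixed point of
\[
\Phi(f,g;u) := e^{it\Delta}f + \mc{I}_{V_L}\big[\Re(e^{it|\nabla|}g^*)u\big] - \mc{I}_{V_L}\big[\Re\big(\mc{J}_0(|\nabla||u|^2)\big)u\big].
\]
Theorem \ref{thm:global model} makes $\mc{I}_{V_L}:N^{s,0,0}(I)\to S^{s,0,0}(I)$ a continuous linear map of norm $C(g_*)$, absorbing the potentially large free wave potential $V_L$ into the linearised Schr\"odinger flow.

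Proposition \ref{prop:wave est end} yields $\|\mc{J}_0(|\nabla||u|^2)\|_{W^{\ell,0,s-\frac12}(I)}\lesa \|u\|_{L^2_tW^{s,2^*}_x(I)}\|u\|_{S^{s,0,0}(I)}$, which, chained with Proposition \ref{prop:schro est for endpoint}, gives bilinear control of $\Phi(f,g;u)$ in both $S^{s,0,0}(I)$ and $L^2_tW^{s,2^*}_x(I)$ in terms of $\|f\|_{H^s}$, $\|g^*\|_{H^\ell}$, and explicit products of $\|u\|_{S^{s,0,0}(I)}$ and $\|u\|_{L^2_tW^{s,2^*}_x(I)}$ (the latter being controlled via Remark \ref{rmk:str-control}). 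I would then introduce the complete metric space
\[
K := \big\{u\in S^{s,0,0}(I):\;\|u\|_{S^{s,0,0}(I)}\le 2R\|f_*\|_{H^s},\;\|u\|_{L^2_tW^{s,2^*}_x(I)}\le 2R\big(\|e^{it\Delta}f_*\|_{L^2_tW^{s,2^*}_x(I)}+\epsilon\big)\big\},
\]
with $R=R(g_*)$ large enough, equipped with the $\|\cdot\|_{S^{s,0,0}(I)}$-metric. The hypothesis $\|e^{it\Delta}f_*\|_{L^2_tW^{s,2^*}_x(I)}\|f_*\|_{H^s}^7<\epsilon$, together with $\|g^*\|_{H^\ell}<\epsilon$, is tailored so that, with a suitable distribution of powers in the bilinear estimates (using also interpolation against Theorems \ref{thm:main bilinear schro}--\ref{thm:main bilinear wave}), every nonlinear contribution is absorbed into a small multiple of the radii of $K$. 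This renders $\Phi(f,g;\cdot)$ a strict contraction on $K$ and produces a unique fixed point $u\in K$.

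Defining $V := e^{it|\nabla|}g - \mc{J}_0(|\nabla||u|^2)$, another application of Proposition \ref{prop:wave est end} places $V\in W^{\ell,0,s-\frac12}(I)$, so $(u,V)$ is the desired solution of \eqref{eq:Zakharov 1st order}. Real-analyticity of the flow map $D\ni(f,g)\mapsto(u,V)$ follows from the implicit function theorem as in the proof of Theorem \ref{thm:gwp nonendpoint}, since $\Phi$ is a finite composition of linear, bilinear and trilinear real-analytic maps and $D_u\Phi$ is a small perturbation of the identity on $K$. When $I=\RR$, scattering of the Schr\"odinger part follows from Lemma \ref{lem:scattering}, and of the wave part from an analogous Cauchy-sequence argument using Lemma \ref{lem:energy ineq wave}. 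The main difficulty, I expect, is orchestrating the power distribution between $\|u\|_{S^{s,0,0}}$ and $\|u\|_{L^2_tW^{s,2^*}_x}$ in the endpoint bilinear estimates so that the fixed point actually closes under the very weak smallness hypothesis, permitting $H^s$-data of arbitrary size as long as its endpoint Strichartz norm on $I$ is sufficiently small; the power of the Strichartz norm in Propositions \ref{prop:schro est for endpoint}--\ref{prop:wave est end} is therefore essential.
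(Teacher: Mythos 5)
Your proposal is, in structure, the paper's own proof: the same reduction to the fixed point equation $u=\Phi(f,g;u)$ with the large free wave $V_L=e^{it|\nabla|}g_*$ absorbed into $\mc{I}_{V_L}$ via Theorem \ref{thm:global model}, the same use of Propositions \ref{prop:schro est for endpoint} and \ref{prop:wave est end} to bound $\Phi$ simultaneously in $S^{s,0,0}(I)$ and $L^2_t W^{s,2^*}_x(I\times\RR^d)$, the same two-norm contraction set, the same recovery $V=e^{it|\nabla|}g-\mc{J}_0(|\nabla||u|^2)$, and the same arguments for real-analyticity (implicit function theorem applied to the real-analytic map $\Phi$) and for scattering (Cauchy sequences in $H^s\times H^\ell$ as $t\to\pm\infty$). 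Two remarks on completeness. First, you defer the quantitative closing of the contraction as ``the main difficulty''; in the paper this is where the hypothesis \eqref{eqn:thm gwp endpoint:smallness} enters, with the free term in the Strichartz component bounded (lossily) by $\|f_*\|_{H^s}^{1/2}\|e^{it\Delta}f_*\|_{L^2_t W^{s,2^*}_x(I\times\RR^d)}^{1/2}$ and the quadratic/cubic terms of the form $\|g-g_*\|_{H^\ell}\,\|u\|_{L^2_tW^{s,2^*}_x}^{1/2}\|u\|_{S^{s,0,0}}^{1/2}$ and $\|u\|_{L^2_tW^{s,2^*}_x}^{3/2}\|u\|_{S^{s,0,0}}^{3/2}$; a complete write-up should carry out this bookkeeping rather than postulate it.

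The genuine gap is the uniqueness claim. The theorem asserts uniqueness in the class $u\in C(I,H^s)\cap L^2_tW^{s,2^*}_x(I\times\RR^d)$, $V\in C(I,H^\ell)$, whereas your argument only yields uniqueness of the fixed point in $K$ (equivalently, in $S^{s,0,0}(I)\times W^{\ell,0,s-\frac12}(I)$ after a continuity argument). The paper upgrades this by showing that any solution in the larger class automatically belongs to $S^{s,0,0}(I)$: the product estimate $\|\Re(V)u\|_{L^2_tH^{s-1}_x(I\times\RR^d)}\lesa\|V\|_{L^\infty_tH^\ell_x}\|u\|_{L^2_tW^{s,2^*}_x(I\times\RR^d)}$ controls $(i\p_t+\Delta)u$ in $L^2_tH^{s-1}_x$, and then extending $u$ from $I$ to $\RR$ by free Schr\"odinger waves and invoking the endpoint Strichartz estimate gives $\|u\|_{S^{s,0,0}(I)}<\infty$. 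This step is not cosmetic: it is exactly what the persistence argument for Theorem \ref{thm:persistence} (and hence Theorem \ref{thm:small data gwp}) later relies on, so it must be supplied.
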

\begin{proof}
Let $g_* \in H^\ell$ and $\epsilon > 0$ to be fixed later depending only on $g_*$ and the implicit constants in Theorem \ref{thm:global model} and Propositions \ref{prop:schro est for endpoint} and \ref{prop:wave est end}. Let $f_* \in H^s$ and $0\in I \subset \RR$ satisfy the smallness condition \eqref{eqn:thm gwp endpoint:smallness}. As in the proof of Theorem \ref{thm:gwp nonendpoint}, we let $V_L = e^{it|\nabla|} g_*$ and $g^*=g-g_*$ and aim to find a fixed point $ u = \Phi(f, g; u)$ where
    \begin{equation*}
        \Phi(f,g; u):=e^{it\Delta}f+ \mc{I}_{V_L}\Big( \Re(V_L) e^{it\Delta} f\Big)+\mc{I}_{V_L}\Big(\Re(e^{it|\nabla|}g^*)u\Big)- \mc{I}_{V_L}\Big(\mc{J}_{0}(|\nabla||u|^2) u\Big).
    \end{equation*}
Let $\Lambda = \| e^{it\Delta} f_*\|_{L^2_t W^{s, 2^*}_x(I)} \|f_*\|_{H^s}^{-1}$ (with $\Lambda =1$ if $f_*=0$) and define the norm
        $$ \|u \|_Z = \inf_{u=u_1 + u_2} \Big( \Lambda^\frac{1}{2} \|u_1\|_{S^{s,0,0}(I)} + \Lambda^{-\frac{1}{2}} \| u_1\|_{L^2_t W^{s, 2^*}_x(I)} + \| u_2\|_{S^{s,0,0}(I)}\Big).$$
Note that $\Lambda^{\frac{1}{2}} \| u \|_{S^{s,0,0}(I)} \lesa \| u \|_{Z} \les \| u \|_{S^{s,0,0}(I)}$ and thus $\|\cdot \|_{Z}$ forms an equivalent norm on $S^{s,0,0}$. An application of Theorem \ref{thm:global model} implies that
    $$\| \mc{I}_{V_L}[ F] \|_{S^{s,0,0}(I)} \lesa_{g_*} \| F \|_{N^{s,0,0}(I)}$$
and therefore Proposition \ref{prop:schro est for endpoint} and Proposition \ref{prop:wave est end} give
     \begin{align*}
     \| \mc{I}_{V_L}[ \Re(\phi) u ] \|_{S^{s,0,0}(I)} &\lesa_{g^*} \| \phi \|_{W^{\ell, 0, s-\frac{1}{2}}(I)} \| u \|_{L^2_tW^{s, 2^*}_x(I)}^\frac{1}{2} \| u \|_{S^{s,0,0}(I)}^{\frac{1}{2}}, \\
     \| \mc{J}_0[|\nabla|(\overline{u} v) ] \|_{W^{\ell, 0, s-\frac{1}{2}}(I)} &\lesa \| u \|_{L^2_tW^{s, 2^*}_x(I)}^\frac{1}{2} \| u \|_{S^{s,0,0}(I)}^{\frac{1}{2}}\| v \|_{L^2_tW^{s, 2^*}_x(I)}^\frac{1}{2} \| v \|_{S^{s,0,0}(I)}^{\frac{1}{2}}.
     \end{align*}
Since $\| u \|_{L^2_tW^{s, 2^*}_x(I)} \lesa \| u \|_{S^{s,0,0}(I)}$ and $(ab)^\frac{1}{2} \les \Lambda^{\frac{1}{2}} a + \Lambda^{-\frac{1}{2}} b$, we conclude that
    $$ \| \mc{I}_{V_L}[ \Re(\phi) u ] \|_{Z}\les \| \mc{I}_{V_L}[ \Re(\phi) u ] \|_{S^{s,0,0}(I)} \lesa \| \phi \|_{W^{\ell, 0, s-\frac{1}{2}}(I)} \| u \|_{Z}$$
and
    $$  \| \mc{J}_0[|\nabla|(\overline{u} v) ] \|_{W^{\ell, 0, s-\frac{1}{2}}(I)} \lesa \|u\|_Z \| v \|_{Z}. $$
Moreover, in view of our choice of $\Lambda>0$ and the smallness condition \eqref{eqn:thm gwp endpoint:smallness} we see that
    \begin{align*}
        \| e^{it\Delta} f\|_Z &\lesa \Lambda^{\frac{1}{2}} \| f_* \|_{H^s} + \Lambda^{-\frac{1}{2}} \| e^{it\Delta} f_*\|_{L^2_tW^{s, 2^*}_x(I)} + \| f - f_*\|_{H^s} \\
                              &\lesa \| f_*\|_{H^s}^{\frac{1}{2}} \| e^{it\Delta} f_* \|_{L^2_tW^{s, 2^*}_x(I)}^\frac{1}{2}  + \| f - f_*\|_{H^s} \lesa \epsilon +  \| f - f_*\|_{H^s}.
    \end{align*}
Combining the above bounds then gives
    $$ \| \Phi(f, g;u)\|_{Z} \lesa_{g_*} \epsilon +  \| f - f_*\|_{H^s} + \| g-g_*\|_{H^\ell} \| u \|_Z + \|u\|_{Z}^3$$
and
    $$ \| \Phi(f, g;u)-\Phi(f, g;v)\|_{Z} \lesa_{g_*}  \| g-g_*\|_{H^\ell} \| u-v \|_Z + (\|u\|_{Z}+\|v\|_Z)^2 \| u - v\|_Z.$$
A routine contraction argument then implies that, provided $\epsilon = \epsilon(g_*)>0$ is sufficiently small, for any $(f,g) \in D$ there is a unique fixed point  $u\in S^{s,0,0}(I)$. Setting $V = V_L - \mc{J}_0[|\nabla| |u|^2]$, we get a solution $(u, V) \in S^{s,0,0}(I) \times W^{\ell, 0, s-\frac{1}{2}}(I)$ due to Proposition \ref{prop:wave est end}. Also, the flow map is real-analytic, we omit the details.

To prove that the solution scatters, we note that writing $I=(T_0, T_1)$, then as in the proof of Theorem \ref{thm:global model}, a computation shows that for any sequence of times $t_j \nearrow T_1$, the sequence $ (e^{- i t_j \Delta} u(t_j), e^{-it_j|\nabla|} V(t_j))$ forms a Cauchy sequence in $H^s \times H^\ell$. In particular the limits
    $$\lim_{t\nearrow T_1} \big(e^{- i t \Delta} u(t), e^{-it|\nabla|} V(t)\big) \qquad \text{and} \qquad \lim_{t \searrow T_0} \big(e^{- i t \Delta} u(t), e^{-it|\nabla|} V(t)\big)$$
exist in $H^s \times H^\ell$. Therefore, if $I=\RR$, the solution scatters to free solutions as $t \to \pm \infty$.

To check the uniqueness claim, we note that the above bounds together with a continuity argument give uniqueness in $S^{s,0,0}(I) \times W^{\ell, 0, \ell}(I)$. In particular, in view of Propositions \ref{prop:schro est for endpoint} and \ref{prop:wave est end}, to prove uniqueness it suffices to show that if $ (u, V) \in C(I, H^s\times H^\ell)$ is a solution to \eqref{eq:Zakharov 1st order} with 
		$$ \| u\|_{L^\infty_t H^s_x(I\times \RR^d)} + \| u \|_{L^2_t W^{s,2^*}_x(I\times \RR^d)} + \| V \|_{L^\infty_t H^\ell(I\times \RR^d)} < \infty $$
then $u \in S^{s,0,0}_w(I)$ and $V \in W^{\ell, 0, \ell}_w(I)$ where the `weak' solution spaces $S^{s,0, 0}_w$ and $W^{\ell, 0, \ell}_w$ are defined in the beginning of Section \ref{sec:local-bil}. To this end, we note that a standard product estimate gives
    $$ \| (i\p_t + \Delta) u \|_{L^2_t H^{s-1}_x(I\times \RR^d)}  =  \| \Re(V) u \|_{L^2_t H^{s-1}_x(I\times \RR^d)} \lesa \| V \|_{L^\infty_t H^\ell(I\times \RR^d)} \| u \|_{L^2_t W^{s, 2^*}_x(I\times \RR^d)}$$
and 
    $$ \| (i\p_t + |\nabla|) V \|_{L^2_t H^{\ell -1}_x(I\times \RR^d)} \lesa \| |\nabla| |u|^2 \|_{L^2_t H^{s-\frac{1}{2}}_x(I\times \RR^d)} \lesa \| u\|_{L^\infty_t H^s_x(I\times \RR^d)} \| u \|_{L^2_t W^{s, 2^*}_x(I\times \RR^d)}. $$
Consequently, extending $u$ from the interval $I = (T_0, T_1)$ to $\RR$ using free Schr\"odinger waves,
    $$u' = \ind_{(-\infty, T_0)}(t) e^{i(t-T_0)\Delta} u(T_0) + \ind_I(t) u(t) + \ind_{(T_1, \infty)}(t) e^{i(t-T_1)\Delta} u(T_1)$$
(potentially shrinking $I$ ensures $u(T_0), u(T_1)\in H^s$ are well-defined) the endpoint Strichartz estimate implies
\begin{align*}
	\| u'\|_{S^{s,0,0}_w(I)} &\les \| u' \|_{L^\infty_t H^s_x} + \| u' \|_{L^2_t W^{s,2^*}_x} + \| (i\p_t + \Delta) u' \|_{L^2_t H^{s-1}_x} \\
					&\lesa	\| u \|_{L^\infty_t H^s_x(I\times \RR^d)} + \| u \|_{L^2_t W^{s,2^*}_x (I\times \RR^d)} + \| (i\p_t + \Delta) u \|_{L^2_t H^{s-1}_x (I\times \RR^d)} 
\end{align*}
and hence $u\in S^{s,0,0}_w(I)$. Similarly, we extend $V$ from $I$ by free waves 
   $$V' = \ind_{(-\infty, T_0)}(t) e^{i(t-T_0)|\nabla|} V(T_0) + \ind_I(t) V(t) + \ind_{(T_1, \infty)}(t) e^{i(t-T_1)|\nabla|} V(T_1)$$
and observe that 
\begin{align*}
	\| V \|_{W^{\ell, 0, \ell}_w(I)} &\les \| V'\|_{L^\infty_t H^\ell_x} + \| (i\p_t + |\nabla|)V' \|_{L^2_t H^{\ell-1}_x} \\
	&\lesa 	\| V\|_{L^\infty_t H^\ell_x(I\times \RR^d)} + \| (i\p_t + |\nabla|)V \|_{L^2_t H^{\ell-1}_x (I\times \RR^d)}.
\end{align*}
Therefore $V \in W^{\ell, 0, \ell}_w(I)$.

\end{proof}

\section{Persistence of Regularity}\label{sec:pers}
In this section our goal is show that under suitable assumptions on a solution $(u, V)$ to \eqref{eq:Zakharov 1st order}, any additional regularity of the data $(u, V)(0)$ persists in time.

\begin{theorem}\label{thm:persistence}
Let $(s, \ell)$ satisfy \eqref{eqn:cond on s l} and fix $a=a^*(s,\ell)$, $b=b^*(s,\ell)$ as in \eqref{eqn:choice of a,b}. Suppose that $(u, V)$ is a solution to the Zakharov system \eqref{eq:Zakharov 1st order} on some interval $I\ni 0$ with
    $$ \| u\|_{L^\infty_t H^{\frac{d-3}{2}}_x(I\times \RR^d)} + \| u \|_{L^2_t W^{\frac{d-3}{2}, 2^*}_x(I\times \RR^d)} + \| V \|_{L^\infty_t H^{\frac{d-4}{2}}_x(I\times \RR^d)} < \infty.$$
    If $(u, V)(0) \in H^s\times H^\ell$, then $(u, V) \in S^{s,a,b}(I)\times W^{\ell, a, s-\frac{1}{2}}(I)$, and the flow map
    is real-analytic with respect to the $H^s\times H^\ell$ and $ S^{s,a,b}(I)\times W^{\ell, a, s-\frac{1}{2}}(I)$ topologies.
\end{theorem}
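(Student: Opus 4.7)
Proof proposal. The strategy is to propagate the $(s,\ell)$-regularity along $I$ by a partition-and-bootstrap argument that exploits smallness of the critical endpoint Strichartz norm $\|u\|_{L^2_t W^{(d-3)/2, 2^*}_x}$ on short time intervals, and then to match the constructed higher-regularity solution with the given $(u,V)$ via uniqueness in the endpoint class of Theorem \ref{thm:gwp endpoint}.

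First, fix a small parameter $\eta > 0$ whose size will be pinned down below. Using $u \in L^2_t W^{(d-3)/2, 2^*}_x(I\times \RR^d)$ together with Lemma \ref{lem:interval decom} applied to $e^{it|\nabla|}V(0)$, decompose $I = \bigcup_{j=1}^N I_j$ into finitely many overlapping open intervals with $\min_j |I_j \cap I_{j+1}| > 0$ such that, on each $I_j$,
\begin{equation*}
\|u\|_{L^2_t W^{(d-3)/2, 2^*}_x(I_j \times \RR^d)} + \| e^{it|\nabla|} V(0) \|_{W^{\ell, a, s-\frac{1}{2}}(I_j) + L^2_t W^{s,d}_x(I_j\times \RR^d)} < \eta.
\end{equation*}

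Second, on $I_1 \ni 0$ construct a higher-regularity solution by a fixed point argument. With $V_L := e^{it|\nabla|}V(0)$, Theorem \ref{thm:global model} supplies the $V_L$-adapted Duhamel operator $\mc{I}_{V_L}: N^{s,a,0}(I_1) \to S^{s,a,0}(I_1)$ whose operator norm depends only on $\|V(0)\|_{H^\ell}$. After substituting $V = V_L - \mc{J}_0[|\nabla||u|^2]$, look for a fixed point of
\begin{equation*}
\Phi(\psi) := \mc{U}_{V_L}(\cdot, 0)\, u(0) - \mc{I}_{V_L}\bigl[ \Re(\mc{J}_0[|\nabla| |\psi|^2])\, \psi \bigr]
\end{equation*}
in a ball of $S^{s,a,0}(I_1)$. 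Combining Corollary \ref{cor:loc bi wave}, which extracts a positive power $\theta$ of the dispersive norm $\|\psi\|_{L^2_t L^{2^*}_x(I_1\times\RR^d)}$, with Corollary \ref{cor:local schro bi} (or Propositions \ref{prop:schro est for endpoint} and \ref{prop:wave est end} in the endpoint case), yields the schematic a priori bound
\begin{equation*}
\|\Phi(\psi)\|_{S^{s,a,0}(I_1)} \leq C_{V(0)} \|u(0)\|_{H^s} + C \eta^{2\theta} \|\psi\|_{S^{s,a,0}(I_1)}^{3-2\theta},
\end{equation*}
and an analogous contraction estimate for differences. Once $\eta$ is small relative to an appropriate power of $\|u(0)\|_{H^s}$, a standard continuity argument produces a unique fixed point $\tilde{u} \in S^{s,a,0}(I_1)$. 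Upgrading to $S^{s,a,b}$ by the same post-processing used at the end of the proof of Theorem \ref{thm:gwp nonendpoint}, and setting $\tilde{V} := V_L - \mc{J}_0[|\nabla||\tilde u|^2] \in W^{\ell, a, s-1/2}(I_1)$ via Theorem \ref{thm:main bilinear wave}, yields the desired higher-regularity solution on $I_1$. Since $\tilde u \in L^2_t W^{(d-3)/2, 2^*}_x(I_1 \times \RR^d)$, the uniqueness clause of Theorem \ref{thm:gwp endpoint} forces $(\tilde u, \tilde V) = (u, V)$ on $I_1$.

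Third, iterate through $j = 2, \ldots, N$ with initial data $(u,V)(\tau_{j-1}) \in H^s\times H^\ell$ supplied by the previous step, and patch the pieces using Lemma \ref{lem:decomposability} to obtain $(u,V) \in S^{s,a,b}(I)\times W^{\ell,a,s-1/2}(I)$. Real-analyticity of the flow map on $H^s\times H^\ell$ follows by applying the implicit function theorem at each step, exactly as in the proof of Theorem \ref{thm:gwp nonendpoint}, since $\Phi$ is built from linear propagators and a cubic Duhamel term; composition of real-analytic maps is then real-analytic.

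The main obstacle will be the circular dependence in Step 2 between $\eta$, the number $N$ of intervals, and the growth of $\|u(\tau_{j-1})\|_{H^s}$ through the iteration. This is resolved by observing that each bootstrap step only multiplies the higher-regularity norm by a constant depending on $\|V(0)\|_{H^\ell}$, so a priori the final norm is bounded by some $C^N \|u(0)\|_{H^s}$; fixing $\eta$ in terms of this crude global bound \emph{before} carrying out the partition closes the argument.
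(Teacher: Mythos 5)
Your two-step scheme (partition, then a fresh contraction at regularity $(s,\ell)$ on each piece, then identify by uniqueness) has a genuine gap in the per-interval contraction and in the bookkeeping across intervals. The small factor $\eta^{2\theta}$ in your displayed bound is misattributed: the partition only makes small the critical Strichartz norm of the \emph{given} rough solution $u$ and the free-wave part of $V(0)$, whereas Corollary \ref{cor:loc bi wave} produces a power of $\|\psi\|_{L^2_t L^{2^*}_x(I_j\times\RR^d)}$ for the \emph{iterate} $\psi$; in the proof of Theorem \ref{thm:gwp nonendpoint} this quantity is controlled only by building into the ball the bound by the free evolution $\|e^{i(t-\tau_{j-1})\Delta}u(\tau_{j-1})\|_{L^2_t L^{2^*}_x(I_j\times\RR^d)}$ of the step-$j$ data, and similarly the uniqueness clause of Theorem \ref{thm:gwp endpoint} that you invoke to force $(\tilde u,\tilde V)=(u,V)$ needs the endpoint smallness \eqref{eqn:thm gwp endpoint:smallness} for the data $(u(\tau_{j-1}),V(\tau_{j-1}))$. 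Both requirements make the admissible length of $I_j$ depend on data known only after solving on $I_{j-1}$. Your proposed repair of the resulting circularity does not close quantitatively: achieving $\eta$-smallness of the $L^2_t$-based norms forces $N(\eta)\gtrsim \eta^{-2}\|u\|^2_{L^2_t W^{\frac{d-3}{2},2^*}_x(I\times\RR^d)}$ intervals, while mapping a ball of radius $R_j\sim\|u(\tau_{j-1})\|_{H^s}\les C^{j}\|u(0)\|_{H^s}$ into itself needs $\eta^{2\theta}R_N^{2-2\theta}\lesa 1$, i.e.\ $\eta$ exponentially small in $N(\eta)$; no $\eta$ satisfies both unless the critical norm is globally small, which is exactly the case one is not allowed to assume here.

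The paper's proof avoids re-running a nonlinear contraction at high regularity with potentially growing data. It first shows the hypotheses already give $(u,V)\in S^{\frac{d-3}{2},0,0}(I)\times W^{\frac{d-4}{2},0,\frac{d-4}{2}}(I)$ (the argument at the end of the proof of Theorem \ref{thm:gwp endpoint}), and then raises regularity in small increments (steps (i)--(iii), e.g.\ $\tilde s<s+\frac18$, $\tilde\ell<\ell+\frac12$), as dictated by the constraints in the bilinear estimates. The engine is the interpolated bound \eqref{eqn:schro reg gain main} (Corollary \ref{cor:loc bi wave}, resp.\ Proposition \ref{prop:wave est end} at the endpoint): the wave Duhamel term at the \emph{target} wave regularity is bounded by $\|u\|^{\theta}_{L^2_t W^{\frac{d-3}{2},2^*}_x(I_j\times\RR^d)}\|u\|^{2-\theta}_{S^{s,a,0}(I_j)}$, i.e.\ by already-controlled lower-regularity norms of the actual solution, uniformly in $j$; hence a finite partition can be fixed once and for all, on each piece $V$ is $\epsilon$-close to the free wave $e^{i(t-t_j)|\nabla|}V(t_j)$ in $W^{\ell,\tilde a,\beta}(I_j)$, and the Schr\"odinger regularity is then upgraded essentially linearly in the high norm (Theorem \ref{thm:global model}, or the time-translated Theorem \ref{thm:gwp nonendpoint}), so the constant-per-interval growth is harmless because $N$ was fixed before the high norm ever entered. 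If you want to salvage your one-shot scheme, you must restructure each step in this way -- smallness measured by low-regularity norms of the given solution, high-regularity estimate linear in the unknown -- rather than relying on smallness of the iterates' dispersive norm.
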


We break the proof of Theorem \ref{thm:persistence} into three main steps.
\begin{enumerate}
    \item (Improving Schr\"odinger regularity when $s\g \ell + \frac{1}{2}$.) If $(s, \ell)$ and $(\tilde{s}, \ell)$ satisfy \eqref{eqn:cond on s l} and $\ell +\frac{1}{2} \les s < \tilde{s}$, then
     $$
        (u, V) \in S^{s,a,0}(I)\times W^{\ell, a, s-\frac{1}{2}}(I) \text{ and } u(0)\in H^{\tilde{s}} \qquad \Longrightarrow \qquad (u, V) \in S^{\tilde{s}, \tilde{a}, 0}(I)\times W^{\ell, \tilde{a}, s-\frac{1}{2}}(I)
     $$
     where $a=a^*(s, \ell)$ and $\tilde{a}=a^*(\tilde{s}, \ell)$. \\

   \item (Improving wave regularity when $s \g \ell + \frac{1}{2}$.) If $(s, \ell)$ and $(s, \tilde{\ell})$ satisfy \eqref{eqn:cond on s l} and $\ell < \tilde{\ell}  \les s - \frac{1}{2}$, then
     $$
     (u, V) \in S^{s,a,0}(I)\times W^{\ell, a, s-\frac{1}{2}}(I) \text{ and } V(0)\in H^{\tilde{\ell}} \qquad \Longrightarrow \qquad (u, V) \in S^{s, \tilde{a}, 0}(I)\times W^{\tilde{\ell}, \tilde{a}, s-\frac{1}{2}}(I)
     $$
where now $a=a^*(s, \ell)$ and $\tilde{a}=a^*(s,\tilde{\ell})$. \\

    \item (Improving wave regularity when $\ell > s -1$.) If $(s, \ell)$ and $(s, \tilde{\ell})$ satisfy \eqref{eqn:cond on s l} and $s-1 < \ell < \tilde{\ell}$, then
     $$
        (u, V) \in S^{s,0,b}(I)\times W^{\ell, 0, s-\frac{1}{2}}(I) \text{ and } V(0)\in H^{\tilde{\ell}} \qquad \Longrightarrow \qquad (u, V) \in S^{s, 0, \tilde{b}}(I)\times W^{\tilde{\ell}, 0, s-\frac{1}{2}}(I)
     $$
     where $b=b^*(s, \ell)$ and $\tilde{b}=b^*(s,\tilde{\ell})$.\\
\end{enumerate}
Theorem \ref{thm:persistence} then follows by repeatedly applying the implications (i)-(iii)
and using the fact that the assumptions on $(u, V)$ in Theorem \ref{thm:persistence} imply that $(u, V)\in S^{\frac{d-3}{2}, 0, 0}(I)\times W^{\frac{d-4}{2}, 0, \frac{d-4}{2}}(I)$.\\

In the remainder of this section we give the proof of the implications (i), (ii), and (iii) in Subsections \ref{subsec:improving schro reg}, \ref{subsec:improving wave reg I}, and \ref{subsec:improving wave reg II} respectively. The proof of Theorem \ref{thm:persistence} is then given in Subsection \ref{subsec:proof of persis thm}.

\subsection{Improving Schr\"odinger regularity}\label{subsec:improving schro reg} Our goal here is to prove the implication (i). Let $(s, \ell)$ and $(\tilde{s}, \ell)$ satisfy \eqref{eqn:cond on s l} and $\ell +\frac{1}{2} \les s < \tilde{s}$. Let $\tilde{a} = a^*(\tilde{s}, \ell)$ and $a  = a^*(s, \ell)$. Clearly we may also assume that $\tilde{s}< s +\frac{1}{8}$, since the general case follows by repeatedly applying this special case. The key point is to prove that  there exists $\theta>0$ such that for any interval $\tilde{I}\subset \RR$
        \begin{equation}\label{eqn:schro reg gain main}
            \| \mc{J}_0( |\nabla| |u|^2 ) \|_{W^{\ell, \tilde{a}, \beta}(\tilde{I})} \lesa \| u \|_{L^2_t W^{\frac{d-3}{2}, 2^*}_x(\tilde{I}\times \RR^d)}^\theta \| u \|_{S^{s, a, 0}(\tilde{I})}^{2-\theta}
        \end{equation}
where $\beta = \max\{\frac{d-4}{2}, \tilde{s}-1\}$. Supposing \eqref{eqn:schro reg gain main} holds, decomposing $I = \cup_{j=1}^N I_j$ with $\min|I_j \cap I_{j+1}|>0$, we may assume that on each interval $I_j$ we have
       $$ \| u \|_{L^2_t W^{\frac{d-3}{2}, 2^*}_x(I_j\times \RR^d)}^\theta \| u \|_{S^{s, a, 0}(I_j)}^{2-\theta} \ll  \epsilon$$
where $\epsilon>0$ is as in Theorem \ref{thm:gwp nonendpoint}. Choose $t_j \in I_j \cap I_{j+1}$. Applying \eqref{eqn:schro reg gain main} and a time translated version of Theorem \ref{thm:gwp nonendpoint} then implies that $u \in S^{\tilde{s}, \tilde{a}, 0}(I_j)$ with real-analytic dependence on $(u(t_j),V(t_j))$ for every $j=1, \dots, N$. Summing up the finite number of intervals $I_j$ via Lemma \ref{lem:decomposability}, then gives $u \in S^{\tilde{s}, \tilde{a}, 0}(I)$ and real-analytic dependence on $(u(0),V(0))$. In particular, we have the implication (i) under the additional assumption that $s< \tilde{s} < s+\frac{1}{8}$. But this implies (i) after repeatedly applying the above argument.

We now turn to the proof of \eqref{eqn:schro reg gain main}. In view of the bound $\| V \|_{W^{\ell, \tilde{a}, \beta}} \lesa \| V \|_{W^{\ell+\tilde{a}-a, a, \beta}}$, it suffices to show that
    \begin{equation}\label{eqn:schro reg gain main2}
        \| \mc{J}_0( |\nabla| |u|^2 ) \|_{W^{\ell+\tilde{a}-a, a, \beta}(\tilde{I})} \lesa \| u \|_{L^2_t W^{\frac{d-3}{2}, 2^*}_x(\tilde{I}\times \RR^d)}^\theta \| u \|_{S^{s, a, 0}(\tilde{I})}^{2-\theta}.
    \end{equation}
If $s>\frac{d-3}{2}$, then a computation shows that
    $$ \beta < \min\{s, 2s - \tfrac{d-2}{2} - a\}, \qquad 2a < 2s - (\ell + \tilde{a} -a)  - \tfrac{d-2}{2}, \qquad a< s-(\ell+\tilde{a}-a) $$
and hence \eqref{eqn:schro reg gain main2} follows from Corollary \ref{cor:loc bi wave}. On the other hand, in the endpoint case $s=\frac{d-3}{2}$, we have $a=\tilde{a}=0$ and $\ell = \frac{d-4}{2}$, and hence \eqref{eqn:schro reg gain main2}  follows from Proposition \ref{prop:wave est end}.

\subsection{Improving wave regularity I}\label{subsec:improving wave reg I} Our goal here is to prove the implication (ii). Let $(s, \ell)$ and $(s, \tilde{\ell})$ satisfy \eqref{eqn:cond on s l} and $\ell < \tilde{\ell}  \les s - \frac{1}{2}$. Without loss of generality, we may make the additional assumption that $\tilde{\ell} < \ell + \frac{1}{2}$, as the general case again follows by repeating this special case. Let $a=a^*(s, \ell)$ and $\tilde{a} = a^*(s, \tilde{\ell})$. A computation shows that
    $$ 2a < 2s - \tilde{\ell} - \frac{d-2}{2}, \qquad a < s  - \tilde{\ell}. $$
In particular, since $\tilde{a} \les a$, an application of Corollary \ref{cor:loc bi wave} implies that there exists $\theta>0$ such that
    \begin{equation}\label{eqn:wave reg gain I main}
        \| \mc{J}_0(|\nabla| |u|^2) \|_{W^{\tilde{\ell}, \tilde{a}, s-\frac{1}{2}}(I)} \les \| \mc{J}_0(|\nabla| |u|^2) \|_{W^{\tilde{\ell}, a, s-\frac{1}{2}}(I)} \lesa \| u \|_{L^2_t W^{\frac{d-3}{2}, 2^*}_x(I\times \RR^d)}^\theta \| u \|_{S^{s,a,0}(I)}^{2-\theta}
    \end{equation}
and hence $V = e^{it|\nabla|} V(0) + \mc{J}_0(|\nabla| |u|^2) \in W^{\tilde{\ell}, \tilde{a}, s-\frac{1}{2}}(I)$. It only remains to improve the Schr\"odinger regularity to $u \in S^{s, \tilde{a}, 0}(I)$ but this follows by arguing as in (i). Namely, we can decompose the interval $I = \cup_{j=1}^N I_j$ into a finite number of intervals $I_j$ satisfying $\min|I_j \cap I_{j+1}|>0$ and
        $$\| u \|_{L^2_t W^{\frac{d-3}{2}, 2^*}_x(I_j\times \RR^d)}^\theta \| u \|_{S^{s,a,0}(I)}^{2-\theta} \ll \epsilon $$
where $\epsilon>0$ is as in Theorem \ref{thm:gwp nonendpoint}. Choose $t_j \in I_j \cap I_{j+1}$. Applying the estimate \eqref{eqn:wave reg gain I main} together with Theorem \ref{thm:gwp nonendpoint}, we conclude that $u \in S^{s, \tilde{a}, 0}(I_j)$  with real-analytic dependence on $(u(t_j),V(t_j))$ for $j=1, \dots, N$ and hence $u \in S^{s,\tilde{a},0}(I)$  by Lemma \ref{lem:decomposability} and real-analytic dependence on $(u(0),V(0))$. Therefore the implication (ii) follows.

\subsection{Improving wave regularity II}\label{subsec:improving wave reg II} Our goal here is to prove the implication (iii). Let $(s, \ell)$ and $(s, \tilde{\ell})$ satisfy \eqref{eqn:cond on s l} and $s -1 < \ell < \tilde{\ell}$. Let $b = b^*(s, \ell)$ and $\tilde{b} = b^*(s, \tilde{\ell})$. Suppose that $(u, V) \in S^{s, 0, b}(I) \times W^{\ell, 0, s-\frac{1}{2}}(I)$, we would like to improve this to $(u, v) \in S^{s, 0, \tilde{b}}(I)\times W^{\tilde{\ell}, 0, s-\frac{1}{2}}(I)$, again with real-analytic dependence. In view of Theorem \ref{thm:main bilinear wave}, it suffices to show that $u \in S^{s,0, \tilde{b}}(I)$. Choose $\ell \les \ell' \les \tilde{\ell}$ such that
    $$ \max\big\{ \tfrac{d-4}{2} + \tilde{b}, s - 1 + \tilde{b} \big\} \les \ell' \les \min\big\{2s - \tfrac{d-2}{2}, s+b\big\}, \qquad (s, \ell') \not = \big( \tfrac{d-2}{2}, \tfrac{d-2}{2} + b \big).$$
An application of Theorem \ref{thm:main bilinear wave} gives
        $$ \| V \|_{W^{\ell', 0, s-\frac{1}{2}}(I)} \lesa \| V(0) \|_{H^{\tilde{\ell}}} + \| u \|_{S^{s, 0, b}(I)}^2 $$
and thus, via Theorem \ref{thm:main bilinear schro}, we conclude that
    $$ \| u \|_{S^{s, 0, \tilde{b}}(I)} \lesa \| u(0) \|_{H^s} + \| V \|_{W^{\ell', 0, s-\frac{1}{2}}(I)} \| u \|_{S^{s,0,0}(I)} \lesa \| u(0) \|_{H^s} + \big( \| V(0) \|_{H^{\tilde{\ell}}} + \| u \|_{S^{s,0, b}(I)}^2\big) \| u \|_{S^{s,0,0}(I)}.$$
Therefore $u \in S^{s, 0, \tilde{b}}(I)$ as required.

\subsection{Proof of Theorem \ref{thm:persistence}}\label{subsec:proof of persis thm} In view of the implications (i), (ii), and (iii), it suffices to show that if $(u, V) \in  C(I, H^{\frac{d-3}{2}}_x \times H^{\frac{d-4}{2}})$ is a solution to the Zakharov equation \eqref{eq:Zakharov 1st order} with $u \in L^2_t W^{\frac{d-3}{2}, 2^*}_x(I\times \RR^d)$, then $(u, V) \in S^{\frac{d-3}{2}, 0, 0}(I)\times W^{\frac{d-4}{2}, 0, \frac{d-4}{2}}(I)$. But this implication is contained in the argument used to prove uniqueness in Theorem \ref{thm:gwp endpoint}.

\section{Proofs of the main results}\label{sec:proofs-main}

\subsection{Proof of Theorem \ref{thm:lwp-ill}}\label{subsec:proof-lwp-ill}
Suppose that  $(s,\ell)$ satisfies \eqref{eqn:cond on s l}. Then, Theorem \ref{thm:gwp nonendpoint} or Theorem \ref{thm:gwp endpoint} imply well-posedness on a (small enough) interval $I\ni 0$.

Now, we prove the converse implication.
More precisely, we prove that the flow map of \eqref{eq:Zakharov 1st order} is not of class $C^2$ for $(s,\ell)$ which
do not satisfy \eqref{eqn:cond on s l}. Fix $(s,\ell)$ and assume the contrary.
Fix $t>0$ and consider
\begin{align*}
  I_\lambda (t)=&-i\int_0^te^{i(t-t')\Delta}\Big(\Re\big( e^{it'|\nabla|}g_\lambda\big) e^{it'\Delta}f_\lambda \Big)dt',\\
  J_\lambda (t)=&i\int_0^te^{i(t-t')|\nabla|}|\nabla|\Big(e^{it'\Delta}
h_\lambda \overline{e^{it'\Delta} h_\lambda }\Big)dt'
\end{align*}
for certain $\|g_\lambda\|_{H^\ell(\R^d)}\approx \|f_\lambda\|_{H^s(\R^d)}\approx \|h_\lambda\|_{H^s(\R^d)}\approx 1$, to be chosen.
$(I_\lambda, J_\lambda)$ corresponds to a second order directional derivative (G\^ateaux derivative) at the origin, which must be uniformly (in $\lambda$) bounded by our hypothesis.

We first prove lower bounds on $\ell$. Choose
\[
\widehat{g_\lambda}(\xi)=\lambda^{-\ell-\frac{d}{2}}\ind_{G_\lambda}(\xi), \; G_\lambda=\{\xi \in \R^d: \lambda \leq |\xi| \leq 2\lambda\}
\]
and
\[
\widehat{f_\lambda}(\xi)=\frac{\ind_{F_\lambda}(\xi)}{|\xi|^{\frac{d}{2}+s}\log(|\xi|)}, \; F_\lambda=\{\xi \in \R^d: 2 \leq |\xi| \leq \lambda/4\}.
\]
We compute
\[
  \int_{F_\lambda} \widehat{f_\lambda}(\xi) d\xi \approx \begin{cases} \frac{\lambda^{\frac{d}{2}-s}}{\log \lambda }& \text{ if }s<\frac{d}{2}\\
    \log\log \lambda  &\text{ if } s=\frac{d}{2}\\
    1 & \text{ if }   s>\frac{d}{2}
  \end{cases}.
\]
If $\tfrac54  \lambda\leq |\xi|\leq \tfrac32 \lambda$ and $\eta \in F_\lambda$, then $\xi-\eta\in G_\lambda$. Therefore,
\[
\|I_\lambda (t)\|_{H^s(\R^d)}\lesa \|g_\lambda\|_{H^\ell(\R^d)}\|f_\lambda\|_{H^s(\R^d)}, \;\text{ for all } \lambda \gg 1,
\]
implies
\[
   \lambda^{-2+s-\ell}\int_{F_\lambda}\widehat{f_\lambda}(\xi) d\xi\lesa \|I_\lambda (t)\|_{H^s(\R^d)}\lesa 1,\]
 which is true if and only if
 \[
   \begin{cases}
     \ell\geq \frac{d}{2}-2 &\text{ if } s<\frac{d}{2}\\
     \ell>\frac{d}{2}-2 &\text{ if } s=\frac{d}{2}\\
      \ell\geq s-2 &\text{ if } s>\frac{d}{2}
   \end{cases}.
 \]

Second, we prove lower bounds on $s$. Choose $h_\lambda=a_\lambda+b_\lambda$, where
\[
\widehat{a_\lambda}(\xi)=\lambda^{-s-\frac{d}{2}}\big(\ind_{A_\lambda}(\xi)+\ind_{-A_\lambda}(\xi)\big),
\; A_\lambda=\{\xi \in \R^d: |\xi-e_1\lambda | \leq \frac{1}{4}\lambda\}
\]
and
\[
\widehat{b_\lambda}(\xi)=\frac{\ind_{B_\lambda}(\xi)}{|\xi|^{\frac{d}{2}+s}\log(|\xi|)}, \; B_\lambda=\{\xi \in \R^d: 2 \leq |\xi| \leq \lambda/8\}.
\]
We compute
\[
  \int_{B_\lambda} \widehat{b_\lambda}(\xi) d\xi =\begin{cases} \frac{\lambda^{\frac{d}{2}-s}}{\log \lambda }& \text{ if }s<\frac{d}{2}\\
    \log\log \lambda  &\text{ if } s=\frac{d}{2}\\
    1 & \text{ if }   s>\frac{d}{2}
  \end{cases},
\]
as above. The spatial Fourier transform of $\Big(e^{it'\Delta}
a_\lambda \overline{e^{it'\Delta} a_\lambda }+e^{it'\Delta}
b_\lambda \overline{e^{it'\Delta} b_\lambda }\Big)$ is zero within the set
$C_\lambda=\{\xi \in \R^d: |\xi-\lambda e_1|\leq \tfrac18 \lambda\}.$
Further, if $\xi \in C_\lambda$ and $\eta \in
B_\lambda$, then $\xi-\eta\in A_\lambda$. Therefore, the bound
\[
\|J_\lambda (t)\|_{H^\ell(\R^d)}\lesa \|h_\lambda\|_{H^s(\R^d)}\|h_\lambda\|_{H^s(\R^d)}, \;\text{ for all } \lambda \gg 1,
\]
implies
\[
\Big(\int_{C_\lambda} \al \xi\ar^{2\ell} \Big|\mc{F} J^{ab}_\lambda (t)(\xi)\Big|^2d\xi\Big)^{\frac12} \lesa 1 , \;\text{ for all } \lambda \gg 1,
\]
where
\[
J^{ab}_\lambda (t)=\int_0^te^{i(t-t')|\nabla|}|\nabla|\Re\Big(e^{it'\Delta}
a_\lambda \overline{e^{it'\Delta} b_\lambda }\Big)dt'.
\]
Since
\[
\Big(\int_{C_\lambda} \al
\xi\ar^{2\ell} \Big|\mc{F} J^{ab}_\lambda
(t)(\xi)\Big|^2d\xi\Big)^{\frac12}\approx \lambda^{\ell-s-1}\int_{B_\lambda}\widehat{b_\lambda}(\xi) d\xi
\]
we must have
 \[
   \begin{cases}
     2s\geq \ell+\frac{d-2}{2} &\text{ if } s<\frac{d}{2}\\
     s>\ell-1 &\text{ if } s=\frac{d}{2}\\
      s\geq \ell-1 &\text{ if } s>\frac{d}{2}
   \end{cases}.
 \]

 \subsection{Proof of Theorem \ref{thm:small data gwp}}\label{subsec:proof-small-gwp}
 Suppose that $(s,\ell)$ satisfies \eqref{eqn:cond on s l} and $f \in H^s(\R^d)$ and $(g_0,g_1)\in H^\ell(\R^d)\times H^\ell(\R^d)$. Define $g=g_0-ig_1\in H^\ell(\R^d)$. Suppose that $\|f\|_{H^s}\leq \epsilon$. If $\epsilon>0$ is small enough (depending on $g$), due to the endpoint Strichartz estimate, we have that \eqref{eqn:thm gwp endpoint:smallness} is satisfied for $I=\R$, and Theorem \ref{thm:gwp endpoint} yields a unique global solution $(u,V)\in C(\R,H^s)\cap L^2_t W^{\frac{d-3}{2}, 2^*}_x(\R\times \RR^d)\times C(\R,H^\ell)$ to the Zakharov equation \eqref{eq:Zakharov 1st order}.
Note that $v=\Re V$, $|\nabla|^{-1}\partial_t v=\Im V$ have the same regularity. Also, by Theorem \ref{thm:persistence}, the additional regularity persits, i.e. $(u,V)\in S^{s,a,b}\times W^{\ell,a,s-\frac12}$ and we have real-analytic dependence. Further, this implies the scattering claim, as shown in the proof of Theorem \ref{thm:global model}.

\subsection*{Acknowledgements}
Financial support by the
  German Research Foundation (DFG) through the CRC 1283 ``Taming uncertainty and profiting from
  randomness and low regularity in analysis, stochastics and their
  applications'' is acknowledged.
  The authors would like to thank the anonymous referee for his valuable suggestions and Akansha Sanwal for spotting a number of typos. The authors thank Martin Spitz for pointing out errors in Theorems 7.6 and 7.7 of the published version. We have fixed the conditions (7.3) and (7.8), as well as the proofs, together with slight improvement of Propositions 6.1 and 6.2, in the current version of the paper.

\bibliographystyle{amsplain}
\bibliography{Zakharov}
\end{document}